\documentclass{article}
\usepackage{proof,prooftree}
\usepackage{amsfonts,amssymb}
\usepackage{amsthm}
\usepackage{latexsym}
\usepackage{pslatex}
\usepackage{stmaryrd}
\usepackage{fancybox}
\usepackage{amsmath} %
\usepackage{amssymb,prooftree,proof}
\usepackage{latexsym}
\usepackage[all]{xy}%\CompileMatrices \SelectTips{cm}{}
\usepackage{qsymbols}
\usepackage{pslatex}
\usepackage{graphicx}
\usepackage{authblk}
%\usepackage{url}
%\usepackage{xspace}

%%%%%%%%%%%%%%%%%%% Specific to arxiv %%%%%%%%
\newenvironment{keyword}[1]{\medskip\noindent\textbf{Keywords:}\ #1}

% ===============================================================
%% Time-stamping
%% =============
\newcount\timehh\newcount\timemm \timehh=\time
\divide\timehh by 60 \timemm=\time
\count255=\timehh\multiply\count255 by -60 \advance\timemm by
\count255
\newcommand{\timestamp}{
{\protect\small\sl\today\ --
  \ifnum\timehh<10 0\fi\number\timehh\,:\,
  \ifnum\timemm<10 0\fi\number\timemm}}

\newif\ifcomment

%%% Comments
%%% ========
\usepackage[usenames,dvipsnames]{color}     % color is convenient for enlighting

\newcommand{\jel}[1]{\ifcomment {\color{cyan} {#1}} \else #1\fi}
\definecolor{darkbrown}{cmyk}{.3,.75,.75,.15}
\newcommand{\pierre}[1]{\ifcomment {\color{darkbrown} {#1}} \else #1\fi}

\newcommand{\via}[1]{}
\newcommand{\SKIP}[1]{}

\newcommand{\mmu}{\widetilde{\mu}}

\newcommand{\lmm}{\ensuremath{\overline{\lambda} \mu\mmu}}

\newcommand{\lG}{\ensuremath{\lambda^{\mathsf{Gtz}}}}

%------------------novel system
%------------------novel system

\newcommand{\LlG}{`L_{\circledR}^{\mathsf{Gtz}}}
\newcommand{\LlGC}{`L_{\circledR,C}^{\mathsf{Gtz}}}
\newcommand{\llG}{\lambda_{\circledR}^{\mathsf{Gtz}}}
\newcommand{\Rcl}{\Lambda_{\circledR}}
\newcommand{\rcl}{\lambda_{\circledR}}
\newcommand{\rc}{\circledR}
\newcommand{\llxr}{\lambda \mathsf{lxr}}
\newcommand{\LGvdash}[2]{\ensuremath{\Rcl{\scriptscriptstyle(#1\vdash_{\rcl} #2)}}}

\newcommand{\isub}[2]{[#1/#2]}   %implicit substitution
   %was \subst, changed
\newcommand{\app}[2]{#1@#2}

\newcommand{\bindx}{\widehat x.}
\newcommand{\bindy}{\widehat y.}

\newcommand{\append}[2]{#1@#2}

%\newcommand{\intt}[1]{\lfloor #1 \rfloor^{\mathcal{R}}}%----------applied mapping
%\newcommand{\intc}[1]{\lfloor #1 \rfloor_k^{\mathcal{R}}}%----------applied mapping
%----------name of mapping
%----------applied mapping
%----------name of mapping
%----------applied mapping
%--------------------proper type of
%--------------------strict type of

%\newcommand{\xsub}[3]{#1\langle #2:=#3 \rangle}

 % set of type variables

% weakening and contraction
\newcommand{\weak}[2]{#1 \odot #2}
\newcommand{\cont}[4]{#1 < ^{#2}_{#3}#4}

% NEW COMMANDS -by Dragisa

\newcommand{\gcw}{\ensuremath{\llG}}

%norms
\newcommand{\size}[1]{\mathcal{S}(#1)}
\newcommand{\cnorm}[1]{|\!|#1|\!|_\mathsf{c}}
\newcommand{\wnorm}[1]{|\!|#1|\!|_\mathsf{w}}

% for Appendix A

%reducibility - by SILVIAL
\newcommand{\isubs}[2]{#1/#2}   %implicit substitution
\newcommand{\tlam}{{Types}} % types
\newcommand{\lefti}{[ \! [}      % type interpretations
\newcommand{\righti}{] \! ]}
\newcommand{\ti}[1]{\lefti #1 \righti}
\newcommand{\tei}{\ti} % term interpretation, now we use the same
\newcommand{\SN}{\ensuremath{\mathcal{SN}}} %set of SN terms
 %set of closed SN terms
 %set of \lR SN terms

\newcommand{\VAR}{\textsf{VAR}}
\newcommand{\RED}{\textsf{RED}}
\newcommand{\SAT}{\textsf{EXP}}  %we changed the name from saturation to expansion property
\newcommand{\WEAK}{\textsf{WEAK}}
\newcommand{\CONT}{\textsf{CONT}}

\newcommand{\rdx}{\textit{redex}}
\newcommand{\ctr}{\textit{contr}}

\newcommand{\vX}{\mathcal{X}} %set of terms
\newcommand{\vM}{\mathcal{M}} %set of terms
\newcommand{\vN}{\mathcal{N}} %set of terms
 %set of terms
 %set of terms
\newcommand{\tA}{\alpha} %type
\newcommand{\tB}{\beta} %type
\newcommand{\tC}{\gamma} %type
 %type
 %type
\newcommand{\tS}{\sigma} %type
\newcommand{\tT}{\tau} %type
\newcommand{\tR}{\rho} %type
\newcommand{\tU}{\upsilon}

\newcommand{\fsto}{\xymatrix@C=15pt{\ar@{>}[r] &}}

%%%%Jelena

%%%%SilviaG
\newcommand{\f}{f}
\newcommand{\g}{g}

%%%%%%%%%%%%%%%%   LMCS macro   %%%%%%%%%%%%%%%%%%%%%%%%%%%%%%%%%%%%%%%%%%%%%%%

 % - terms
 % - contexts
 % - expressions

\newcommand{\lcw}{\lambda_{\mathsf{c}\mathsf{w}}}
\newcommand{\LR}{\Lambda_{\circledR}}

\newcommand{\LRvdash}[2]{\ensuremath{\Rcl{\scriptscriptstyle(#1\vdash_{\rcl} #2)}}}

%%%%%%%%%%%%%%%%   TCS macro   %%%%%%%%%%%%%%%%%%%%%%%%%%%%%%%%%%%%%%%%%%%%%%%
\newcommand{\xsub}[2]{\langle #2 := #1 \rangle}   %explicit substitution
\newcommand{\Rclx}{\Lambda^{\mathsf{x}}_{\circledR}}
\newcommand{\rclx}{\lambda^{\mathsf{x}}_{\circledR}}
\newcommand{\intt}[1]{\lfloor #1 \rfloor}%----------applied mapping without superscribed R
\newcommand{\intc}[1]{\lfloor #1 \rfloor_\mathsf{k}}%----------applied mapping without superscribed R
%----------applied mapping without superscribed R
\newcommand{\dztop}{\Delta_{0}^{\top}}
\newcommand{\gztop}{\Gamma_{0}^{\top}}
\newcommand{\gtop}{\Gamma^{\top}}
%%%%%%%%%%%%%%%%%%%%  Theorems, propositions, lemmas, definitions %%%%%%%%%%%%%%%%%%%%

% Theorems, propositions, lemmas, definition are numbered in sequence

\theoremstyle{plain}
\newtheorem{theorem}{Theorem}
\newtheorem{proposition}[theorem]{Proposition}
\newtheorem{lemma}[theorem]{Lemma}
\newtheorem{example}[theorem]{Example}
%\newproof{proof}{Proof}

\theoremstyle{definition}
\newtheorem{definition}[theorem]{Definition}

%%%%%%%%%%%%%%%%%%% Specific to arxiv %%%%%%%%
\newcommand{\ead}[1]{\thanks{\textsf{Email:}~#1}}

\newif\ifcomment
%\commenttrue           % with comments
\commentfalse           % without comments.

%\journal{Theoretical Computer Science}

\begin{document}

\title{A journey through resource control lambda calculi and explicit substitution
  using intersection types}
\author[1]{S. Ghilezan \ead{gsilvia@uns.ac.rs}}
\author[1]{J. Iveti\' c  \ead{jelenaivetic@uns.ac.rs}}
\author[2]{P. Lescanne \ead{pierre.lescanne@ens-lyon.fr}}
\author[3]{S. Likavec \ead{likavec@di.unito.it}}
\affil[1]{University of Novi Sad, Faculty of Technical Sciences,  Serbia}
\affil[2]{University of Lyon, \' Ecole Normal Sup\' erieure de Lyon, France}
\affil[3]{Dipartimento di Informatica, Universit\`a di Torino, Italy}
 \date{}

\maketitle

\begin{abstract}
In this paper we invite the reader to a journey through three
lambda calculi with resource control: the lambda calculus, the
sequent lambda calculus, and the lambda calculus with explicit
substitution. All three calculi enable explicit control of
resources due to the presence of weakening and contraction
operators. Along this journey, we propose intersection type
assignment systems for all three resource control calculi. We
recognise the need for three kinds of variables all requiring
different kinds of intersection types. Our main contribution is
the characterisation of strong normalisation of reductions in all
three calculi, using the techniques of reducibility, head subject
expansion, a combination of well-orders and suitable embeddings of
terms.

\begin{keyword}
  lambda calculus; resource control; sequent calculus; explicit substitution;
  intersection types; strong normalisation; typeability
\end{keyword}
\end{abstract}

\section*{Introduction}
\label{sec:intro}

It is well known that simply typed $\lambda$-calculus captures the
computational content of intuitionistic natural deduction through
Curry-Howard correspondence~\cite{howa80}. This connection between
logic and computation can be extended to other calculi and logical
systems~\cite{ghillika08}: Herbelin's
$\overline{\lambda}$-calculus~\cite{herb95b}, Pinto and Dyckhoff's
$\lambda\pi\sigma$-calculus~\cite{pintdyck98} and Esp\'{\i}rito
Santo's $\lG$-calculus~\cite{jesTLCA07} correspond to
intuitionistic sequent calculus. In the realm of classical logic,
Parigot's $\lambda\mu$-calculus~\cite{pari92} corresponds to
classical natural deduction,  whereas Barbanera and Berardi's
symmetric calculus~\cite{barbbera96} and Curien and Herbelin's
$\lmm$-calculus~\cite{curiherb00} correspond to its sequent
version. Extending first, the $\lambda \mathsf{x}$~calculus of
explicit substitution and later $\lambda$-calculus and
$\lG$-calculus with explicit operators for erasure (a.k.a.\
weakening) and duplication (a.k.a.\ contraction) brings the same
correspondence to intuitionistic natural deduction and
intuitionistic sequent calculus with explicit structural rules of
weakening and contraction on the logical side~\cite{gentzen35}, as
investigated in~\cite{kesnleng07,kesnrena09,ghilivetlesczuni11}.

On the other hand, let us consider type assignment systems for
various calculi. To overcome the limitations of the simple type
discipline in which the only forming operator is an
arrow~$\rightarrow$, a new type forming operator $\cap$ was
introduced in~\cite{coppdeza78,coppdeza80,pott80,sall78}. The
newly obtained intersection type assignment systems enabled
complete characterisation of  termination of term
calculi~\cite{bake92,gall98,ghil96}. The extension of Curry-Howard
correspondence to other formalisms brought the need for
intersection types into many different
settings~\cite{dougghillesc07,kikuchiRTA07,matthes2000,neer05}.

Our work is inspired by and extends Kesner and
Lengrand's~\cite{kesnleng07} work on resource operators for
$\lambda$-calculus with explicit substitution. Their linear
$\llxr$-calculus introduces operators for linear substitution,
erasure and duplication, preserving at the same time confluence
and full composition of explicit substitutions of its predecessor
${\lambda \mathsf{x}}$~\cite{BlooRose95,RoseBlooLang:jar2011}. The
simply typed version of this calculus corresponds to the
intuitionistic fragment of Linear Logic's proof-nets, according to
Curry-Howard correspondence, and it enjoys strong normalisation
and subject reduction. Resource control in sequent
$\lambda$-calculus was proposed by Ghilezan et al. in
\cite{ghilivetlesczuni11}, whereas resource control both in
$\lambda$-calculus and  ${\lambda \mathsf{x}}$-calculus was
further developed in \cite{kesnrena09,kesnrena11}.

In order to control all resources, in the  spirit of $\lambda
I$-calculus (see e.g. \cite{bare84}), void lambda abstraction is
not acceptable, so in order to have $\lambda x.M$ the variable $x$
has to occur in~$M$. But if $x$ is not used in a term $M$, one can
perform an \emph{erasure (a.k.a weakening)} by using the
expression $\weak{x}{M}$. In this way, the term $M$ does not
contain the variable $x$, but the term $\weak{x}{M}$ does.
Similarly, a variable should not occur twice. If nevertheless, we
want to have two positions for the same variable, we have to
duplicate it explicitly, using fresh names. This is done by using
the operator $\cont{x}{x_1}{x_2}{M}$, called \emph{duplication
(a.k.a contraction)} which creates two fresh variables $x_1$ and
$x_2$.

Explicit control of erasure and duplication leads to decomposing
of reduction steps into more atomic steps, thus revealing the
details of computation which are usually left implicit.  Since
erasing and duplicating of (sub)terms essentially changes the
structure of a program, it is important to see how this mechanism
really works and to be able to control this part of computation.
We chose a direct approach to term calculi rather than taking a
more common path through linear logic \cite{abra93,bent93}. In
practice, for instance in the description of compilers by rules
with
binders~\cite{rose:LIPIcs:2011:3130,rose11:_implem_trick_that_make_crsx_tick},
the implementation of substitutions of linear variables by
inlining\footnote{\emph{Inlining} is the technics which
consists in copying at compile time
    the text of a function instead of implementing a call to that function.} is simple and efficient when substitution of duplicated
variables requires the cumbersome and time consuming mechanism of
pointers and it is therefore important to tightly control
duplication.  On the other hand, precise control of erasing does
not require a garbage collector and prevents memory leaking.

Our main goal is to characterize the termination of reductions for
term calculi with explicit control of duplication and erasure, in
different frameworks: natural deduction, sequent style and with
explicit substitution. We revisit the syntax and reduction rules
of three term calculi with explicit operators for  weakening and
contraction: $\rcl$ (the extension of the $\lambda$-calculus),
$\llG$ (the extension of the sequent lambda calculus $\lG$) and
$\rclx$ (the extension of the $\lambda \mathsf{x}$-calculus with
explicit substitution). We then introduce intersection types into
all three calculi $\rcl$, $\llG$ and $\rclx$. Our intersection
type assignment systems $\rcl\cap$, $\llG\cap$ and $\rclx\cap$
integrate intersection into logical rules, thus preserving
syntax-directedness of the system. We assign restricted form of
intersection types to terms, namely strict types, therefore
minimizing the need for pre-order on types. By using these
intersection type assignment systems we prove that terms in all
three calculi enjoy strong normalisation if and only if they are
typeable. To the
  best of our knowledge, together with the conference version of this paper
  \cite{ghilivetlikalesc11}, this is the first treatment of intersection types in the presence of
  resource control operators. Intersection types fit naturally to resource control. Indeed, the
  control allows us to consider three types of variables: variables as placeholders (the
  traditional view of $`l$-calculus), variables to be duplicated and variables
    to be erased because they are irrelevant. For each kind of a variable, there is a kind of type
  associated to it, namely a strict type for a \emph{placeholder}, an intersection for a variable
  \emph{to-be-duplicated}, and a specific type for an \emph{erased} variable.

We first prove in Section~\ref{sec:lambda} that  terms typeable in
$\rcl$-calculus are strongly normalising
 by adapting the reducibility method for explicit resource control operators.
Then we prove that all strongly normalising terms are typeable in
$\rcl$-calculus by using typeability of normal forms and head
subject expansion.

Further, we prove strong normalisation for $\llG$ and $\rclx$, in
Section~\ref{sec:lambda_gtz} and Section~\ref{sec:lambda_xsub},
respectively, by using a combination of well-orders and a suitable
embeddings of $\llG$-terms and $\rclx$-terms into $\rcl$-terms
which preserve typeability and enable the simulation of all
reductions and equations by the operational semantics of the
$\rcl$-calculus. Finally, we prove that strong normalisation
implies typeability in $\llG$ and $\rclx$ using head subject
expansion.

\paragraph{Related work}

The idea to control the use of variables can be traced  back to Church's $\lambda I$-calculus~\cite{bare84}.
Currently there are several different lines of research in resource aware term calculi.
Van Oostrom~\cite{oost01} and later Kesner and Lengrand~\cite{kesnleng07}, applying ideas from linear logic~\cite{LL},
proposed to extend the $\lambda$-calculus and the $\lambda \mathsf{x}$-calculus, with operators to control the use of
variables (resources). Generalising this approach, Kesner and Renaud~\cite{kesnrena09,kesnrena11}
developed the {\em prismoid of resources}, a system of eight calculi parametric over the explicit
and implicit treatment of substitution, erasure and duplication. Resource control in sequent
calculus corresponding to classical logic was proposed in~\cite{zunicPHD}.
On the other hand, process calculi and their relation to $\lambda$-calculus by Boudol~\cite{boud93} initialised
investigations in resource aware non-deterministic $\lambda$-calculus with multiplicities and a generalised notion of
application~\cite{boudcurilava99}. The theory was connected to linear logic via differential
$\lambda$-calculus in~\cite{ehrhregn03} and typed with non-idempotent intersection types in~\cite{pagaronc10}.
In this paper we follow the notation of~\cite{zunicPHD} and~\cite{ghilivetlikalesc11}, which is related to~\cite{oost01}.

This paper is an extended and revised version of
\cite{ghilivetlikalesc11}. In addition to $\rcl$-calculus and
$\llG$-calculus presented in \cite{ghilivetlikalesc11}, this
extended version adds the treatment of the $\rclx$-calculus, the
resource lambda calculus with explicit substitution, together with
the characterization of strong normalisation for this calculus.
Also, the proof that typeability implies strong normalisation in
$\rcl$-calculus is improved.

\paragraph{Outline of the paper}

In Section~\ref{sec:lambda} we first give the syntax and reduction
rules for $\rcl$-calculus, followed by the intersection type
assignment system and the characterisation of strong
normalisation. Section~\ref{sec:lambda_gtz} deals with
$\llG$-calculus, its syntax, reduction rules, intersection type
assignment system and the characterisation of strong
normalisation. Section~\ref{sec:lambda_xsub} introduces
$\rclx$-calculus with its syntax, reduction rules and intersection
type assignment system, again followed by the characterisation of
strong normalisation. Finally, we conclude in
Section~\ref{sec:conclusion} with some directions for future work.

\tableofcontents

\section{Intersection types for the resource control lambda calculus $\rcl$}
\label{sec:lambda}

In this section we focus on the resource control lambda calculus
$\rcl$. First we revisit its syntax and operational semantics;
further we introduce intersection type assignment system and
finally we prove that typebility in the proposed system completely
characterises the set of strongly normalising $\rcl$-terms.

\subsection{Resource control lambda calculus $\rcl$}
\label{subsec:rcl}

The \emph{resource control} lambda calculus, $\rcl$, is an
extension of the $\lambda$-calculus with explicit operators for
weakening and contraction. It corresponds to the $\lcw$-calculus
of Kesner and Renaud, proposed in \cite{kesnrena09} as a vertex of
``the prismoid of resources", where substitution is implicit. We
use a notation along the lines of \cite{zunicPHD} and close to
\cite{oost01}. It is slightly modified w.r.t.~\cite{kesnrena09} in
order to emphasize the correspondence between this calculus and
its sequent counterpart.

First of all, we introduce the syntactic category of {\em pre-terms} of $\rcl$-calculus given by the following
abstract syntax:
$$
\begin{array}{lcrcl}
\textrm{Pre-terms}    &  & \f & ::= & x \,|\,\lambda x.\f \,|\,
\f \f\,|\,\weak{x}{\f} \,|\, \cont{x}{x_1}{x_2}{\f}
\end{array}
$$
where $x$ ranges over a denumerable set of term variables.
$\lambda x.\f$ is an \emph{abstraction}, $\f\f$ is an \emph{application},
$\weak{x}{\f}$ is a \emph{weakening} and $\cont{x}{x_1}{x_2}{\f}$
is a \emph{contraction}. The contraction operator is assumed to be
insensitive to the order of the arguments $x_1$ and~$x_2$, i.e.
$\cont{x}{x_1}{x_2}{\f}=\cont{x}{x_2}{x_1}{\f}$.

The set of free variables of a pre-term $\f$, denoted by $Fv(\f)$,
is defined as follows:

\hspace*{10mm}$\begin{array}{c}
Fv(x) = x;\quad Fv(\lambda x.\f) = Fv(\f)\setminus\{x\}; \quad
Fv(\f \g) = Fv(\f) \cup Fv(\g);\\
Fv(\weak{x}{\f}) = \{x\} \cup Fv(\f); \quad Fv(\cont{x}{x_1}{x_2}{\f}) = \{x\} \cup Fv(\f)\setminus
\{x_1,x_2\}.
\end{array}$

In $\cont{x}{x_1}{x_2}{\f}$, the contraction binds the variables
$x_1$ and $x_2$ in $f$ and introduces a free variable $x$. The
operator $\weak {x}{\f}$ also introduces a free variable $x$. In
order to avoid parentheses, we let the scope of all binders extend
to the right as much as possible.

The set of $\rcl$-{\em terms}, denoted by $\LR$ and ranged over by $M,N,P,M_1,...$. is a subset of
the set of pre-terms, defined in Figure~\ref{fig:wf}.
\begin{figure}[htpb]
\centerline{ \framebox{ $
    \begin{array}{c}
      \begin{array}{c@{\qquad\qquad}c}
        \infer{x \in \LR}{} &
        \infer{\lambda x.\f \in \LR}
        {\f \in \LR \;\; x \in Fv(\f)}
      \end{array}
      \\\\
    \infer{\f \g \in \LR}
    {\f \in \LR\;\; \g \in \LR \;\; Fv(\f) \cap Fv(\g) = \emptyset}
    \\\\
    \begin{array}{c@{\qquad}c}
      \infer{\weak{x}{\f} \in \LR}
      {\f \in \LR \;\; x \notin Fv(\f)}&
      \infer{\cont{x}{x_1}{x_2}{\f} \in \LR}
      {\f \in \LR \;\; x_{1} \not = x_{2}\;\; x_1,x_2 \in Fv(\f) \;\; x \notin Fv(\f) \setminus \{x_{1},x_{2}\}}
    \end{array}
  \end{array}
$ }} \caption{$\LR$:  $\rcl$-terms}
\label{fig:wf}
\end{figure}

Informally, we say that a term is a pre-term in which in every
subterm every free variable occurs exactly once, and every binder
binds (exactly one occurrence of) a free variable. Our notion of
terms corresponds to the notion of linear terms in
\cite{kesnleng07}. In that sense, only linear expressions are in
the focus of our investigation. In other words, terms are
well-formed in $\rcl$ if and
  only if bound variables appear actually in the term and
  variables occur at most once. These conditions will be assumed
  throughout the paper without mentioning them explicitly.
This assumption is not a restriction, since every traditional term
has a  corresponding $\rcl$-term, as illustrated by the following
example.

\begin{example}
  Pre-terms $\lambda x.y$ and $\lambda x. xx$ are not $\rcl$-terms, on
  the other hand pre-terms $\lambda x.(\weak{x}{y})$ and $\lambda
  x.\cont{x}{x_1}{x_2}(x_1 x_2)$ are their corresponding $\rcl$-terms.
\end{example}

In the sequel, we use the notation $\weak{X}{M}$ for
$\weak{x_1}{...\;\weak{x_n}{M}}$ and $\cont{X}{Y}{Z}{M}$ for
$\cont{x_1}{y_1}{z_1}{...\;\cont{x_n}{y_n}{z_n}{M}}$, where $X$,
$Y$ and $Z$ are lists of size $n$, consisting of all distinct
variables $x_1,...,x_n, y_1,..., y_n, z_1,...,z_n$.
If $n=0$, i.e.,  if $X$ is the empty list, then $\weak{X}{M} = \cont{X}{Y}{Z}{M} = M$.
Note that due to the equivalence relation defined in Figure~\ref{fig:equiv-rcl},
we can use these notations also for sets of variables of the same size.

In what follows we use Barendregt's convention~\cite{bare84} for variables:
in the same context a variable cannot be both free and bound. This applies to
binders like $`l x.M$ which binds $x$ in $M$, $\cont{x}{x_1}{x_2}M$ which binds $x_1$ and
$x_2$ in $M$, and also to the implicit substitution $M[N/x]$ which can be seen as a binder for $x$ in $M$.

The set $\circledR$ of  reduction rules
$\rightarrow_{\rcl}$ of the $\rcl$-calculus is presented in
Figure~\ref{fig:red-rcl}.
\begin{figure}[htbp]
\centerline{ \framebox{ $
\begin{array}{rrcl}
(\beta)             & (\lambda x.M)N & \rightarrow & M\isub{N}{x} \\[1mm]
(\gamma_1)          & \cont{x}{x_1}{x_2}{(\lambda y.M)} & \rightarrow & \lambda y.\cont{x}{x_1}{x_2}{M} \\
(\gamma_2)          & \cont{x}{x_1}{x_2}{(MN)} & \rightarrow & (\cont{x}{x_1}{x_2}{M})N, \;\mbox{if} \; x_1,x_2 \not\in Fv(N)\\
(\gamma_3)          & \cont{x}{x_1}{x_2}{(MN)} & \rightarrow & M(\cont{x}{x_1}{x_2}{N}), \;\mbox{if} \; x_1,x_2 \not\in Fv(M)\\[1mm]
(\omega_1)          & \lambda x.(\weak{y}{M}) & \rightarrow & \weak{y}{(\lambda x.M)},\;x \neq y\\
(\omega_2)          & (\weak{x}{M})N & \rightarrow & \weak{x}{(MN)}\\
(\omega_3)          & M(\weak{x}{N}) & \rightarrow & \weak{x}{(MN)}\\[1mm]
(\gamma \omega_1)   & \cont{x}{x_1}{x_2}{(\weak{y}{M})} & \rightarrow & \weak{y}{(\cont{x}{x_1}{x_2}{M})},\;y \neq x_1,x_2\\
(\gamma \omega_2)   & \cont{x}{x_1}{x_2}{(\weak{x_1}{M})} & \rightarrow & M \isub{x}{x_2}\\
\end{array}
$ }} \caption{The set $\circledR$ of reduction rules of the
$\rcl$-calculus} \label{fig:red-rcl}
\end{figure}

The reduction rules are divided into four groups. The main
computational step is $\beta$ reduction. The group of $(\gamma)$
reductions perform propagation of contraction into the expression.
Similarly, $(\omega)$ reductions extract weakening out of
expressions. This discipline allows us to optimize the computation
by delaying duplication of terms on the one hand, and by
performing erasure of terms as soon as possible on the other.
Finally, the rules in $(\gamma\omega)$ group explain the
interaction between explicit resource operators that are of
different nature.

The inductive definition of the meta operator $\isub{\;}{\;}$,
representing the implicit substitution of free variables, is given
in Figure~\ref{fig:sub-rcl}. In order to obtain well formed terms
as the results of substitution, $Fv(M) \cap Fv(N) = \emptyset$
must hold in this definition. Moreover, notice that for the
expression $M\isub{N}{x}$ to make sense, $M$ must contain exactly
one occurrence of the free variable $x$ and $M$ and $N$ must share
no variable but $x$.\footnote{We prefer $x$ not to belong to $M$
in order to respect Barendregt convention on variable.} Indeed a
substitution is always created by a $\beta$-reduction and, in the
term $(\lambda x.M)N$, $x$ has to appear exactly once in $M$ and
the other variables of $Fv(M) \cup Fv(N)$ as well. Barendregt
convention on variable says that $x$ should not occur freely in
$N$.  Also, if the terms $N_1$ and $N_2$ are obtained from the
term $N$ by renaming all the free variables in $N$ by fresh
variables, then $M[N_1/x_1,N_2/x_2]$ denotes a parallel
substitution.

\begin{figure}[ht]
\centerline{ \framebox{ $
\begin{array}{rcl}
x\isub{N}{x} & \triangleq & N \\
(\lambda y.M)\isub{N}{x} & \triangleq  & \lambda y.M\isub{N}{x},\;\;x \neq y \\
(MP)\isub{N}{x} & \triangleq  & M\isub{N}{x} P, \;\;x \not\in Fv(P) \\
(MP)\isub{N}{x} & \triangleq  & M P\isub{N}{x}, \;\;x \not\in Fv(M) \\
(\weak{y}{M})\isub{N}{x} &\triangleq  & \weak{y}{M\isub{N}{x}}, \;\;x \neq y\\
(\weak{x}{M})\isub{N}{x} & \triangleq  & \weak{Fv(N)}{M}\\
(\cont{y}{y_1}{y_2}{M})\isub{N}{x} & \triangleq  &
\cont{y}{y_1}{y_2}{M\isub{N}{x}}, \;\;x \neq y\\
(\cont{x}{x_1}{x_2}{M})\isub{N}{x} & \triangleq  &
\cont{Fv(N)}{Fv(N_1)}{Fv(N_2)}{M[N_1/x_1,N_2/x_2]}
\end{array}
$ }} \caption{Substitution in $\rcl$-calculus}
\label{fig:sub-rcl}
\end{figure}

\begin{definition}[Parallel substitution] \label{def:par-sub}
 $M[N/x,P/z] = (M\isub{N}{x})\isub{P}{z}$ for $\; x,z \in
    Fv(M)$ and
    $\;(Fv(M)\setminus\{x\}) \cap Fv(N) = (Fv(M)\setminus\{z\}) \cap Fv(P) = Fv(N) \cap Fv(P) =
    \emptyset$.
\end{definition}

In the $\rcl$-calculus, one works modulo equivalencies given in
Figure~\ref{fig:equiv-rcl}.

\begin{figure}
\centerline{ \framebox{ $
\begin{array}{lrcl}
(`e_1) & \weak{x}{(\weak{y}{M})} & \equiv_{\rcl} &
\weak{y}{(\weak{x}{M})}\\
(`e_2) & \cont{x}{x_1}{x_2}{M} & \equiv_{\rcl} & \cont{x}{x_2}{x_1}{M}\\
(`e_3) & \cont{x}{y}{z}{(\cont{y}{u}{v}{M})} & \equiv_{\rcl} &
\cont{x}{y}{u}{(\cont{y}{z}{v}{M})}\\
(`e_4) & \cont{x}{x_1}{x_2}{(\cont{y}{y_1}{y_2}{M})} &
\equiv_{\rcl}  & \cont{y}{y_1}{y_2}{(\cont{x}{x_1}{x_2}{M})},\;\;
x \neq y_1,y_2, \; y \neq x_1,x_2
\end{array}
$ }} \caption{Equivalences in $\rcl$-calculus}
\label{fig:equiv-rcl}
\end{figure}

Notice that because we work with $\rcl$ terms, no variable is lost
during the computation, which is stated by the following
proposition.

\begin{proposition}
\label{prop:pres-of-FV} If $M \to M'$ then $Fv(M) = Fv(M').$
\end{proposition}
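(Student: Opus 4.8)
The plan is to argue by induction on the derivation of the one-step reduction $M \to M'$. Since $\to$ is the contextual (compatible) closure of the rules in Figure~\ref{fig:red-rcl}, the induction splits into the inductive (congruence) step and the base cases given by the individual rules. The inductive step is immediate: the free-variable function is defined compositionally on the five term constructors (see the clauses defining $Fv$), so whenever a subterm $N$ is rewritten to $N'$ with $Fv(N) = Fv(N')$, replacing $N$ by $N'$ inside any constructor leaves the computed free-variable set unchanged. Hence the whole work is in checking the base cases.

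For the rules that do not create a meta-substitution, I would settle each base case by simply unfolding the definition of $Fv$ on both sides and invoking the side conditions. For example, $Fv((\weak{x}{M})N) = \{x\} \cup Fv(M) \cup Fv(N) = Fv(\weak{x}{(MN)})$ handles $(\omega_2)$, and the remaining $(\gamma)$- and $(\omega)$-rules are completely analogous; the side conditions ($x \neq y$ in $(\omega_1)$, $x_1,x_2 \notin Fv(N)$ in $(\gamma_2)$, $y \neq x_1,x_2$ in $(\gamma\omega_1)$) together with the well-formedness constraints of Figure~\ref{fig:wf} are exactly what is needed for the two sets to coincide. None of these cases presents any difficulty.

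The only rules requiring more than unfolding are $(\beta)$ and $(\gamma\omega_2)$, since both introduce the meta-substitution of Figure~\ref{fig:sub-rcl}. For these I would first isolate the auxiliary substitution lemma: for every $\rcl$-term $M$ with $x \in Fv(M)$ and $Fv(M) \cap Fv(N) = \emptyset$, one has $Fv(M\isub{N}{x}) = (Fv(M) \setminus \{x\}) \cup Fv(N)$. Granting it, $(\beta)$ is immediate because $Fv((\lambda x.M)N) = (Fv(M)\setminus\{x\}) \cup Fv(N) = Fv(M\isub{N}{x})$; and for $(\gamma\omega_2)$, using $x_1 \notin Fv(M)$ and $x_2 \in Fv(M)$ (both forced by the well-formedness of $\cont{x}{x_1}{x_2}{(\weak{x_1}{M})}$), the left-hand side computes to $\{x\} \cup (Fv(M) \setminus \{x_2\})$, which is precisely $Fv(M\isub{x}{x_2})$ by the lemma, the required disjointness $x \notin Fv(M)$ again following from well-formedness and Barendregt's convention.

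The substitution lemma itself is proved by induction on $M$ following the clauses of Figure~\ref{fig:sub-rcl}, and this is where the single genuine subtlety lies. The variable, abstraction and application clauses are routine bookkeeping. The delicate clauses are those in which the weakening/contraction machinery redistributes $Fv(N)$: in $(\weak{x}{M})\isub{N}{x} \triangleq \weak{Fv(N)}{M}$ (where $x$ lives only in the weakening, so $x \notin Fv(M)$) one gets $Fv(\weak{Fv(N)}{M}) = Fv(N) \cup Fv(M)$, matching the claim; and in the parallel-substitution clause $(\cont{x}{x_1}{x_2}{M})\isub{N}{x} \triangleq \cont{Fv(N)}{Fv(N_1)}{Fv(N_2)}{M[N_1/x_1,N_2/x_2]}$, with $N_1,N_2$ fresh renamings of $N$, one must verify that the outer contraction over $Fv(N)$ combined with the two nested substitutions again yields $(Fv(M)\setminus\{x\}) \cup Fv(N)$. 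Checking this last clause, keeping careful track of the freshness of the renamed copies and of the interaction of the two parallel substitutions, is the main obstacle; everything else reduces to routine set manipulation. Finally I would note that the equivalences of Figure~\ref{fig:equiv-rcl} preserve $Fv$ by the same compositional argument, so the statement is unaffected by working modulo $\equiv_{\rcl}$.
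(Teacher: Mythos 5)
Your proof is correct and follows the same route as the paper, which simply states that the proof is by case analysis on the reduction rules and leaves all details implicit. The one substantive ingredient you make explicit --- the auxiliary identity $Fv(M\isub{N}{x}) = (Fv(M)\setminus\{x\})\cup Fv(N)$ needed for the $(\beta)$ and $(\gamma\omega_2)$ cases --- is exactly the lemma the paper's one-line proof tacitly relies on, and your treatment of its weakening and contraction clauses checks out.
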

\begin{proof}
The proof is by case analysis on the reduction rules.
\end{proof}

The following lemma explains how to compose implicit
substitutions.

\begin{lemma}
\label{lemma:subs-comp} \rule{0in}{0in}
\begin{itemize}
\item If  $z`:FV(N)$ then $(M\isub{N}{x})\isub{P}{z} =
  M\isub{N\isub{P}{z}}{x}$.
\item If $z`:FV(M)$ then $(M\isub{N}{x})\isub{P}{z} =
  (M\isub{P}{z})\isub{N}{x} $
\end{itemize}
\end{lemma}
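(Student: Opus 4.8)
The plan is to prove both identities by structural induction on $M$, following exactly the inductive definition of substitution given in Figure~\ref{fig:sub-rcl}. This is the standard ``substitution lemma'' of the $\lambda$-calculus, but here the bookkeeping is complicated by the linearity constraints on $\rcl$-terms and by the presence of the weakening and contraction operators, so the real work is checking that the side conditions on free variables propagate correctly through every case.

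For the first identity, assuming $z \in Fv(N)$, I would proceed by cases on the top constructor of $M$. The base case $M = x$ gives $x\isub{N}{x} = N$ on the left, and since $x \neq z$ (as $z \in Fv(N)$ and by the disjointness conventions $x \notin Fv(N)$), the right-hand side is $N\isub{P}{z}$, matching. For the binders ($\lambda y.M'$, $\weak{y}{M'}$, $\cont{y}{y_1}{y_2}{M'}$ with $y \neq x$), both sides push the substitutions inside and the result follows from the induction hypothesis, using Barendregt's convention to guarantee $y$ does not clash with the free variables of $N$ or $P$. The application cases split according to which subterm contains $x$; here one uses that $x$ occurs exactly once in $M$ together with the fact that $z \in Fv(N)$ to track that after performing $\isub{N}{x}$, the variable $z$ has migrated into the subterm that received $N$. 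The two genuinely delicate cases are $\weak{x}{M'}$ and $\cont{x}{x_1}{x_2}{M'}$, where $\isub{N}{x}$ interacts with $Fv(N)$: in the weakening case the left side produces $\weak{Fv(N)}{M'}$ and then the outer $\isub{P}{z}$ acts on the weakening $\weak{z}{\cdots}$ created from $z \in Fv(N)$, which must be shown equal to substituting into $N$ first; the contraction case similarly requires care with the duplicated copies $N_1, N_2$ of $N$.

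For the second identity, assuming $z \in Fv(M)$, the argument is the symmetric commutation statement: since both $x$ and $z$ occur (exactly once each) in $M$ and $Fv(N) \cap Fv(P) = \emptyset$ with $z \notin Fv(N)$ and $x \notin Fv(P)$ by the conventions, the two substitutions act on disjoint occurrences and hence commute. Again I would induct on $M$, the interesting cases being the applications, where $x$ and $z$ may sit in the same or in different subterms, and the resource operators, where one checks that erasure and duplication of $N$ and $P$ do not interfere because their free-variable sets are disjoint.

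The main obstacle I anticipate is not any single case but the uniform handling of the linearity side conditions, in particular verifying in the $\weak{x}{M'}$ and $\cont{x}{x_1}{x_2}{M'}$ cases of the first part that the free variables introduced by $\weak{Fv(N)}{\cdot}$ or by the renamed copies $N_1,N_2$ line up precisely with what the nested substitution $N\isub{P}{z}$ produces. Keeping the hypotheses of Definition~\ref{def:par-sub} and the disjointness requirement $Fv(M)\cap Fv(N)=\emptyset$ invariant under the induction is where the bulk of the (routine but error-prone) verification lies; Proposition~\ref{prop:pres-of-FV} and Barendregt's convention are the tools that make this manageable.
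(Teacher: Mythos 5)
Your proposal matches the paper's proof: both proceed by structural induction on $M$ following the clauses of Figure~\ref{fig:sub-rcl}, with the same identification of $\weak{x}{M'}$ and $\cont{x}{x_1}{x_2}{M'}$ as the delicate cases where $\weak{Fv(N)}{\cdot}$ and the renamed copies $N_1,N_2$ must be reconciled with $N\isub{P}{z}$ using $z\in Fv(N)$ (hence $z_1\in Fv(N_1)$, $z_2\in Fv(N_2)$). The approach and case analysis are essentially identical to the paper's.
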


\begin{proof}
\rule{0in}{0in}

Notice that for the expressions to make sense, one must have
  $x`:Fv(M)$  and
  $(Fv(M)\setminus\{x\}) \cap Fv(N) = \emptyset$,  %
  $ (Fv(N)\setminus\{z\}) \cap   Fv(P)= \emptyset$ and %
  ${(Fv(M) \setminus\{x\}) \cap Fv(P)  = \emptyset}$. %

\begin{itemize}

\item
$(x\isub{N}{x})\isub{P}{z}  \triangleq  N\isub{P}{z} \mbox{ and }
x\isub{N\isub{P}{z}}{x} \triangleq N\isub{P}{z}$

\item
$((\lambda y.M)\isub{N}{x})\isub{P}{z}  \triangleq (\lambda y.M\isub{N}{x})\isub{P}{z} \triangleq \lambda y.(M\isub{N}{x})\isub{P}{z}  =_{IH} \lambda y.M\isub{N\isub{P}{z}}{x} \triangleq (\lambda y.M)\isub{N\isub{P}{z}}{x} ,\;\;x,z \neq y$

\item
$x \not\in Fv(Q)$ (the case $x \not\in Fv(M)$ is analogous)\\
$((MQ)\isub{N}{x})\isub{P}{z}  \triangleq
(M\isub{N}{x}Q)\isub{P}{z} =_{z \in Fv(N)}  (M\isub{N}{x})\isub{P}{z}Q =_{IH}
M\isub{N\isub{P}{z}}{x}Q = (MQ)\isub{N\isub{P}{z}}{x}$

\item
$((\weak{y}{M})\isub{N}{x})\isub{P}{z} \triangleq (\weak{y}{M\isub{N}{x}})\isub{P}{z} \triangleq \weak{y}{M\isub{N}{x}}\isub{P}{z} =_{IH} \weak{y}{M\isub{N\isub{P}{z}}{x}} \triangleq (\weak{y}{M})\isub{N\isub{P}{z}}{x}, \;\;y \neq x,z$

\item
$((\weak{x}{M})\isub{N}{x})\isub{P}{z}  \triangleq   (\weak{Fv(N)}{M})\isub{P}{z} =_{z \in Fv(N)}  \\
(\weak{z}{\weak{\{Fv(N) \setminus \{z\}\}}{M}})\isub{P}{z} =
\weak{\{Fv(P) \cup Fv(N) \setminus z\}}{M} = \weak{Fv(N\isub{P}{z})}{M}  \triangleq
(\weak{x}{M})\isub{N\isub{P}{z}}{x}$

\item $((\cont{y}{y_1}{y_2}{M})\isub{N}{x})\isub{P}{z}
\triangleq (\cont{y}{y_1}{y_2}{M\isub{N}{x}})\isub{P}{z}
\triangleq \cont{y}{y_1}{y_2}{M\isub{N}{x}}\isub{P}{z} =_{IH}
\cont{y}{y_1}{y_2}{M\isub{\isub{P}{z}N}{x}} =_{IH}
(\cont{y}{y_1}{y_2}{M})\isub{\isub{P}{z}N}{x}, \;\;x \neq y$

\item
$((\cont{x}{x_1}{x_2}{M})\isub{N}{x})\isub{P}{z}  \triangleq
(\cont{Fv(N)}{Fv(N_1)}{Fv(N_2)}{M[N_1/x_1,N_2/x_2]})\isub{P}{z} \triangleq\\
\cont{Fv(N)}{Fv(N_1)}{Fv(N_2)}{M\isub{N_1}{x_1}\isub{N_2}{x_2}}\isub{P}{z}
= \\
\cont{z}{z_1}{z_2}{}\cont{Fv(N)\setminus
\{z\}}{Fv(N_1)\setminus \{z_1\}}{Fv(N_2)\setminus
\{z_2\}}{M\isub{N_1}{x_1}\isub{N_2}{x_2}}\isub{P}{z} \triangleq\\
\cont{Fv(P)}{Fv(P_1)}{Fv(P_2)}{}\cont{Fv(N)\setminus
\{z\}}{Fv(N_1)\setminus \{z_1\}}{Fv(N_2)\setminus
\{z_2\}}{M\isub{N_1}{x_1}\isub{N_2}{x_2}
\isub{P_1}{z_1}\isub{P_2}{z_2}}=_{IH}\\
\cont{Fv(P) \cup Fv(N)\setminus \{z\}} {Fv(P_1) \cup
Fv(N_1)\setminus \{z_1\}}{Fv(P_2) \cup Fv(N_2)\setminus
\{z_2\}}{M[N_1\isub{P_{1}}{z_{1}}/x_1,N_2\isub{P_{2}}{z_{2}}/x_2]}
\triangleq\\
(\cont{x}{x_1}{x_2}{M})\isub{N\isub{P}{z}}{x}.$\\
We used the fact that $z_1 \in Fv(N_1)$ and $z_2 \in Fv(N_2)$.
\end{itemize}
\end{proof}

In the following lemma, by $\rightarrow^*$ we denote the reflexive
and transitive closure of the reductions and equivalences of
$\rcl$-calculus, i.e., $\rightarrow^* \triangleq
(\rightarrow_{\rcl} \cup \equiv_{\rcl})^*$.

\begin{lemma}
\label{lemma:add-red}
\rule{0in}{0in}
\begin{itemize}
\item[(i)] $M\isub{\weak{y}{N}}{x} \rightarrow^*
\weak{y}{M\isub{N}{x}}$ \item[(ii)]
$\cont{y}{y_{1}}{y_{2}}{M\isub{N}{x}} \rightarrow^*
M\isub{\cont{y}{y_{1}}{y_{2}}{N}}{x}$, for $y_1,y_2 \notin Fv(M)$.
\end{itemize}
\end{lemma}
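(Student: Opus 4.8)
The plan is to prove both items by induction on the structure of the term $M$, following in each part exactly the clauses of the substitution meta-operator in Figure~\ref{fig:sub-rcl}. In every inductive step I would compute $M\isub{N}{x}$ one layer down, invoke the induction hypothesis on the immediate subterm that carries the (unique) occurrence of $x$, and then move the extra operator across with a single structural rule. For (i) the cases $M=\lambda z.M'$, $M=M_1M_2$ and $M=\weak{z}{M'}$ with $z\neq x$ close by $(\omega_1)$, by $(\omega_2)$ or $(\omega_3)$, and by the equivalence $(\varepsilon_1)$, while a leading $\cont{z}{z_1}{z_2}{M'}$ with $z\neq x$ is passed by $(\gamma\omega_1)$. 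Dually, for (ii) the same cases use $(\gamma_1)$, $(\gamma_2)$ or $(\gamma_3)$, $(\gamma\omega_1)$, and, for a leading contraction, the commutation equivalence $(\varepsilon_4)$. All side conditions (such as $z\neq y_1,y_2$) follow from the Barendregt convention together with the hypothesis $y_1,y_2\notin Fv(M)$, which forces $y_1,y_2\in Fv(N)$. The base case $M=x$ and the clause $M=\weak{x}{M'}$ are settled directly from the substitution rules and $(\varepsilon_1)$.

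The only genuinely delicate case, in both parts, is the one where the substituted variable is itself the head of a contraction, $M=\cont{x}{x_1}{x_2}{M'}$: its clause duplicates $N$ into fresh copies $N_1,N_2$, yielding $\cont{Fv(N)}{Fv(N_1)}{Fv(N_2)}{M'[N_1/x_1,N_2/x_2]}$, so the single operator on the left must be split and relocated into two different positions of $M'$. For (i), substituting $\weak{y}{N}$ produces the copies $\weak{y^{(1)}}{N_1}$ and $\weak{y^{(2)}}{N_2}$; I would first show, by an auxiliary induction on $M'$, that $M'[\weak{y^{(1)}}{N_1}/x_1,\weak{y^{(2)}}{N_2}/x_2]\rightarrow^*\weak{y^{(1)}}{\weak{y^{(2)}}{M'[N_1/x_1,N_2/x_2]}}$, each step being one of $(\omega_1)$--$(\omega_3)$ or $(\gamma\omega_1)$ and the application split being closed by the main hypothesis (i) on the proper subterms of $M'$. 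I would then bring $\cont{y}{y^{(1)}}{y^{(2)}}{\,\cdot\,}$ next to the body by the equivalences of Figure~\ref{fig:equiv-rcl}, fire $(\gamma\omega_2)$ to cancel it against $\weak{y^{(1)}}{\,\cdot\,}$, simplify the residual $\isub{y}{y^{(2)}}$ by the weakening clause, and extract the resulting $\weak{y}{\,\cdot\,}$ past the remaining contractions by $(\gamma\omega_1)$, reaching $\weak{y}{(\cont{x}{x_1}{x_2}{M'})\isub{N}{x}}$.

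For (ii) this case is harder. Since $y_1,y_2\in Fv(N)$, the duplication of $N$ creates the contractions $\cont{y_1}{y_1^{(1)}}{y_1^{(2)}}{\,\cdot\,}$ and $\cont{y_2}{y_2^{(1)}}{y_2^{(2)}}{\,\cdot\,}$, which together with the outer $\cont{y}{y_1}{y_2}{\,\cdot\,}$ form a three-node contraction tree grouped by the source variables $y_1,y_2$ rather than by the two copies. The key move is to reorganise this tree, using the rotation $(\varepsilon_3)$ together with $(\varepsilon_2)$ and $(\varepsilon_4)$, into the position-grouped tree with top node $\cont{y}{y^{(1)}}{y^{(2)}}{\,\cdot\,}$ and inner nodes $\cont{y^{(1)}}{y_1^{(1)}}{y_2^{(1)}}{\,\cdot\,}$ and $\cont{y^{(2)}}{y_1^{(2)}}{y_2^{(2)}}{\,\cdot\,}$; the two trees share the same four leaves, and these equivalences are exactly what lets one pass between two binary trees over a common leaf set. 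Once grouped by position, each inner contraction touches variables of a single copy only, and can be driven into the corresponding occurrence of $N_1$ (resp.\ $N_2$) by an auxiliary induction on $M'$ using the $(\gamma)$ rules, i.e.\ by the main hypothesis (ii) on the proper subterms of $M'$; at the application node where $x_1$ and $x_2$ separate, $(\gamma_2)$ and $(\gamma_3)$ route the two contractions into the two branches. Matching the leftover top-level contractions against $Fv(\cont{y}{y_1}{y_2}{N})$ then closes the case.

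I expect the tree reorganisation by the equivalences to be the main obstacle: it is the one spot where the reduction rules alone do not suffice and where the bookkeeping of the fresh variables $y_i^{(j)}$ and $y^{(j)}$ must be done with care. Everything else is mechanical propagation of a single operator along the unique spine leading to $x$. The two auxiliary inductions are carried out on the proper subterm $M'$ and invoke only the main statement (i), respectively (ii), on subterms of $M'$, so the whole argument stays well-founded under the primary induction on $M$.
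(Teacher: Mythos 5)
Your proposal follows essentially the same route as the paper's proof: structural induction on $M$ along the clauses of Figure~\ref{fig:sub-rcl}, closing the routine cases with a single $(\omega)$, $(\gamma)$, $(\gamma\omega_1)$ or $(\varepsilon)$ step, and handling the case $M=\cont{x}{x_1}{x_2}{M'}$ by regrouping the contraction tree with the equivalences of Figure~\ref{fig:equiv-rcl} before pushing the per-copy contractions (resp.\ weakenings) into $N_1$ and $N_2$ via the induction hypothesis and cancelling with $(\gamma\omega_2)$. The only cosmetic difference is that you package the two IH applications to the parallel substitution as auxiliary inductions, and note that in part (ii) the case $M=\weak{x}{M'}$ also needs a $(\gamma\omega_2)$ step (since $y_1,y_2\in Fv(N)$), not just the substitution clause and $(\varepsilon_1)$.
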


\begin{proof}
\rule{0in}{0in}
The proof is by induction on the structure of the
term $M$.
\begin{itemize}
\item[(i)]
\begin{itemize}
\item $M = x$. Then $M\isub{\weak{y}{N}}{x} = x\isub{\weak{y}{N}}{x} \triangleq \weak{y}{N}
    \triangleq \weak{y}{x\isub{N}{x}} = \weak{y}{M\isub{N}{x}}$.
\item $M = \lambda z.P$. Then $M\isub{\weak{y}{N}}{x} = (\lambda z.P)\isub{\weak{y}{N}}{x} \triangleq
    \lambda z.P\isub{\weak{y}{N}}{x} \rightarrow_{IH} \lambda z.(\weak{y}{P\isub{N}{x}})
    \rightarrow_{\omega_{1}} \weak{y}{(\lambda z.P\isub{N}{x})}
    \triangleq \weak{y}{(\lambda z.P)\isub{N}{x}} = \weak{y}{M\isub{N}{x}}$.
\item $M=PQ$. We will treat the case when $x \not\in Fv(Q)$. The case when  $x \not\in Fv(P)$ is analogous. \\
    Then  $M\isub{\weak{y}{N}}{x} =  (PQ)\isub{\weak{y}{N}}{x} \triangleq P\isub{\weak{y}{N}}{x}Q
    \rightarrow_{IH} (\weak{y}{P\isub{N}{x}})Q \\ \rightarrow_{\omega_{2}} \weak{y}{(P\isub{N}{x}Q)}
    \triangleq \weak{y}{(PQ)\isub{N}{x}} = \weak{y}{M\isub{N}{x}}$.
\item $M = \weak{z}{P}$. Then $M\isub{\weak{y}{N}}{x} = (\weak{z}{P})\isub{\weak{y}{N}}{x}
    \triangleq \weak{z}{P}\isub{\weak{y}{N}}{x} \rightarrow_{IH} \weak{z}{\weak{y}{P\isub{N}{x}}}
    \equiv_{\epsilon_{1}} \weak{y}{\weak{z}{P\isub{N}{x}}} = \weak{y}{M\isub{N}{x}}$.
\item $M = \weak{x}{P}$. Then $M\isub{\weak{y}{N}}{x} = (\weak{x}{P})\isub{\weak{y}{N}}{x}
    \triangleq \weak{Fv(\weak{y}{N})}{P} = \weak{y}{\weak{Fv(N)}{P}} \triangleq
    \weak{y}{(\weak{x}{P})\isub{N}{x}} =
    \weak{y}{M\isub{N}{x}}$, since $x\not\in Fv(P)$.
\item $M = \cont{z}{z_{1}}{z_{2}}{P}$. Then $M\isub{\weak{y}{N}}{x}
    = (\cont{z}{z_{1}}{z_{2}}{P})\isub{\weak{y}{N}}{x}
    \triangleq \cont{z}{z_{1}}{z_{2}}{P}\isub{\weak{y}{N}}{x}
    \rightarrow_{IH} \cont{z}{z_{1}}{z_{2}}{(\weak{y}{P}\isub{N}{x})}
    \rightarrow_{\gamma\omega_{1}} \weak{y}{(\cont{z}{z_{1}}{z_{2}}{P\isub{N}{x}})}
    = \weak{y}{M\isub{N}{x}}$.
\item $M = \cont{x}{x_{1}}{x_{2}}{P}$. Then $M\isub{\weak{y}{N}}{x}
    = (\cont{x}{x_{1}}{x_{2}}{P})\isub{\weak{y}{N}}{x}
    \triangleq\\ \cont{Fv(\weak{y}{N})}{Fv(\weak{y_{1}}{N_{1}})}{Fv(\weak{y_{2}}{N_{2}})}
    {P}[\weak{y_{1}}{N_{1}} / {x_{1}}, \weak{y_{2}}{N_{2}} / {x_{2}}]
    \rightarrow_{IH}\\ \cont{Fv(\weak{y}{N})}{Fv(\weak{y_{1}}{N_{1}})}{Fv(\weak{y_{2}}{N_{2}})}
    {\weak{\weak{y_{1}}{y_{2}}}{P[N_{1} / x_{1}, N_{2} / x_{2}]}}
    =\\ \cont{Fv(N)}{Fv(N_{1})}{Fv(N_{2})}{\cont{y}{y_{1}}{y_{2}}
    {\weak{\weak{y_{1}}{y_{2}}}{P[N_{1} / x_{1}, N_{2} / x_{2}]}}}
    \rightarrow_{\gamma\omega_{2}}\\
    \cont{Fv(N)}{Fv(N_{1})}{Fv(N_{2})}
    {\weak{y}{P[N_{1} / x_{1}, N_{2} / x_{2}]}}
    \rightarrow_{\gamma\omega_{1}}\\ \weak{y}{\cont{Fv(N)}{Fv(N_{1})}{Fv(N_{2})}
    {P[N_{1} / x_{1}, N_{2} / x_{2}}]}
    \triangleq \weak{y}{(\cont{x}{x_{1}}{x_{2}}{P\isub{N}{x}})}
    = \weak{y}{M\isub{N}{x}}$.
\end{itemize}

\item[(ii)]
\begin{itemize}
\item $M = x$. Then $\cont{y}{y_1}{y_2}{M\isub{N}{x}} =
    \cont{y}{y_1}{y_2}{x\isub{N}{x}} \triangleq \cont{y}{y_1}{y_2}{N}
    \triangleq x\isub{\cont{y}{y_1}{y_2}{N}}{x} = M\isub{\cont{y}{y_1}{y_2}{N}}{x}$.
\item $M = \lambda z.P$. Then $\cont{y}{y_1}{y_2}{M\isub{N}{x}} =
    \cont{y}{y_1}{y_2}{(\lambda z.P)\isub{N}{x}} \triangleq
    \cont{y}{y_1}{y_2}{\lambda z.P\isub{N}{x}} \\ \rightarrow_{\gamma_1} \lambda z.\cont{y}{y_1}{y_2}{P\isub{N}{x}}
    \rightarrow_{IH} \lambda z.P\isub{\cont{y}{y_1}{y_2}{N}}{x}
    \triangleq ( \lambda z.P)\isub{\cont{y}{y_1}{y_2}{N}}{x} =\\ M\isub{\cont{y}{y_1}{y_2}{N}}{x}$.
\item $M=PQ$, $x \not\in Fv(Q)$. The case when  $x \not\in Fv(P)$ is analogous. \\
    Then  $\cont{y}{y_1}{y_2}{M\isub{N}{x}} =  \cont{y}{y_1}{y_2}{(PQ)\isub{N}{x}} \triangleq \cont{y}{y_1}{y_2}{P\isub{N}{x}Q}
    \rightarrow_{\gamma_2} (\cont{y}{y_1}{y_2}{P\isub{N}{x}})Q \\ \rightarrow_{IH} P\isub{\cont{y}{y_1}{y_2}{N}}{x}Q
    \triangleq (PQ)\isub{\cont{y}{y_1}{y_2}{N}}{x} = M\isub{\cont{y}{y_1}{y_2}{N}}{x}$.
\item $M = \weak{z}{P}$, where $z \neq x,y_1,y_2$. Then
    $\cont{y}{y_1}{y_2}{M\isub{N}{x}} =
    \cont{y}{y_1}{y_2}{(\weak{z}{P})\isub{N}{x}} \triangleq
    \cont{y}{y_1}{y_2}{\weak{z}{P}\isub{N}{x}} \rightarrow_{\gamma\omega_1}
    \weak{z}{\cont{y}{y_1}{y_2}{P\isub{N}{x}}}
    \rightarrow_{IH} \weak{z}{P\isub{\cont{y}{y_1}{y_2}{N}}{x}}
    \triangleq (\weak{z}{P})\isub{\cont{y}{y_1}{y_2}{N}}{x} = M\isub{\cont{y}{y_1}{y_2}{N}}{x}$.
\item $M = \weak{x}{P}$. Then
    $\cont{y}{y_1}{y_2}{M\isub{N}{x}} =
    \cont{y}{y_1}{y_2}{(\weak{x}{P})\isub{N}{x}} \triangleq
    \cont{y}{y_1}{y_2}{\weak{Fv(N)}{P}}.$
    Since $y_1,y_2 \in Fv(N)$ we have that
    $\cont{y}{y_1}{y_2}{\weak{y_1}{\weak{y_2}{\weak{Fv(N)\setminus\{y_1,y_2\}}{P}}}}\rightarrow_{\gamma\omega_2}
    \weak{y}{\weak{Fv(N)\setminus\{y_1,y_2\}}{P}}.$\\
    On the other hand,
    $M\isub{\cont{y}{y_1}{y_2}{N}}{x} =
    (\weak{x}{P})\isub{\cont{y}{y_1}{y_2}{N}}{x} \triangleq
    \weak{Fv(\cont{y}{y_1}{y_2}{N})}{P} =
    \weak{y}{\weak{Fv(N)\setminus\{y_1,y_2\}}{P}},$ so the
    proposition is proved.
\item $M = \cont{z}{z_{1}}{z_{2}}{P}$. Then
    $\cont{y}{y_1}{y_2}{M\isub{N}{x}} =
    \cont{y}{y_1}{y_2}{(\cont{z}{z_{1}}{z_{2}}{P})\isub{N}{x}}
    \triangleq\\ \cont{y}{y_1}{y_2}{\cont{z}{z_{1}}{z_{2}}{P}\isub{N}{x}}
    \equiv_{\rcl} \cont{z}{z_1}{z_2}{\cont{y}{y_{1}}{y_{2}}{P}\isub{N}{x}}
    \rightarrow_{IH}
    \cont{z}{z_{1}}{z_{2}}{P\isub{\cont{y}{y_1}{y_2}{N}}{x}}
    \triangleq
    (\cont{z}{z_1}{z_2}{P})\isub{\cont{y}{y_1}{y_2}{N}}{x}
    = M\isub{\cont{y}{y_1}{y_2}{N}}{x}$.

\item
    $\cont{y}{y_1}{y_2}{M\isub{N}{x}} =
    \cont{y}{y_1}{y_2}{(\cont{x}{x_{1}}{x_{2}}{P})\isub{N}{x}}
    \triangleq \\
    \cont{y}{y_1}{y_2}{\cont{Fv(N)}{Fv(N_{1})}{Fv(N_{2})}{P}[N_{1}/x_{1},N_{2}/x_{2}]} =  \\
    \cont{y}{y_{1}}{y_{2}}{\cont{y_{1}}{y_{1}'}{y_{2}'}{\cont{y_{2}}{y_{1}''}{y_{2}''}{\cont{Fv(N) \setminus \{y_{1},y_{2}\}}{Fv(N_{1}) \setminus \{y_{1}',y_{1}''\}}{Fv(N_{2}) \setminus \{y_{2}',y_{2}''\}}{P[N_{1}/x_{1},N_{2}/x_{2}]}}}}   \equiv_{\rcl}\\
    \cont{y}{y_{1}}{y_{1}'}{\cont{y_{1}}{y_{2}}{y_{2}'}{\cont{y_{2}}{y_{1}''}{y_{2}''}{\cont{Fv(N) \setminus \{y_{1},y_{2}\}}{Fv(N_{1}) \setminus \{y_{1}',y_{1}''\}}{Fv(N_{2}) \setminus \{y_{2}',y_{2}''\}}{P[N_{1}/x_{1},N_{2}/x_{2}]}}}} \equiv_{\rcl}\\
    \cont{y}{y_{1}}{y_{1}'}{\cont{y_{1}}{y_{2}}{y_{1}''}{\cont{y_{2}}{y_{2}'}{y_{2}''}{\cont{Fv(N) \setminus \{y_{1},y_{2}\}}{Fv(N_{1}) \setminus \{y_{1}',y_{1}''\}}{Fv(N_{2}) \setminus \{y_{2}',y_{2}''\}}{P[N_{1}/x_{1},N_{2}/x_{2}]}}}} \equiv_{\rcl}\\
    \cont{y}{y_{1}}{y_{2}}{\cont{y_{1}}{y_{1}'}{y_{1}''}{\cont{y_{2}}{y_{2}'}{y_{2}''}{\cont{Fv(N) \setminus \{y_{1},y_{2}\}}{Fv(N_{1}) \setminus \{y_{1}',y_{1}''\}}{Fv(N_{2}) \setminus \{y_{2}',y_{2}''\}}{P[N_{1}/x_{1},N_{2}/x_{2}]}}}} \equiv_{\rcl}\\
    \cont{y}{y_1}{y_2}{\cont{Fv(N) \setminus \{y_{1},y_{2}\}}{Fv(N_{1}) \setminus \{y_{1}',y_{1}''\}}{Fv(N_{2}) \setminus \{y_{2}',y_{2}''\}}{\cont{y_{2}}{y_{2}'}{y_{2}''}{\cont{y_{1}}{y_{1}'}{y_{1}''}{P[N_{1}/x_{1},N_{2}/x_{2}]}}}} \rightarrow_{IHx2} \\
    \cont{y}{y_1}{y_2}{\cont{Fv(N) \setminus \{y_{1},y_{2}\}}{Fv(N_{1}) \setminus \{y_{1}',y_{1}''\}}{Fv(N_{2}) \setminus \{y_{2}',y_{2}''\}}{P[(\cont{y_{1}}{y_{1}'}{y_{1}''}{N_{1}})/x_{1},(\cont{y_{2}}{y_{2}'}{y_{2}''}{N_{2}})/x_{2}]}}
   $

    On the other hand, rewriting the right hand side yields:\\
    $M\isub{\cont{y}{y_1}{y_2}{N}}{x} =
    (\cont{x}{x_1}{x_2}{P})\isub{\cont{y}{y_1}{y_2}{N}}{x} \triangleq\\
    \cont{Fv(\cont{y}{y_{1}}{y_{2}}{N})}{Fv(\cont{y_{3}}{y_{1}}{y_{2}}{N'_{1}})}
    {Fv(\cont{y_{4}}{y_{1}}{y_{2}}{N'_{2}})}{P[(\cont{y_{3}}{y_{1}}{y_{2}}{N'_{1}})/x_{1},(\cont{y_{4}}{y_{1}}{y_{2}}{N'_{2}})/x_{2}]} \\
    $ By renaming $y_1 \to y'_1$ and $y_2 \to y''_1$ in
    $\cont{y_{3}}{y_{1}}{y_{2}}{N'_{1}}$ and $y_1 \to y'_2$ and $y_2
    \to y''_2$ in $\cont{y_{4}}{y_{1}}{y_{2}}{N'_{2}}$
    we get\\
    $
    \cont{Fv(\cont{y}{y_{1}}{y_{2}}{N})}{Fv(\cont{y_{3}}{y'_{1}}{y''_{1}}{N_{1}})}
    {Fv(\cont{y_{4}}{y'_{2}}{y''_{2}}{N_{2}})}{P[(\cont{y_{3}}{y'_{1}}{y''_{1}}{N_{1}})/x_{1},(\cont{y_{4}}{y'_{2}}{y''_{2}}{N_{2}})/x_{2}]}
    $\\
    where $N'_1 [y'_1/y_1, y''_1/y_2] = N_1$ and $N'_2 [y'_2/y_1, y''_2/y_2] = N_2.$\\

Finally, by renaming $y_3 \to y_1$ and   $y_4 \to y_2$ we get \\
    $
    \cont{Fv(\cont{y}{y_{1}}{y_{2}}{N})}{Fv(\cont{y_{1}}{y'_{1}}{y''_{1}}{N_{1}})}
    {Fv(\cont{y_{2}}{y'_{2}}{y''_{2}}{N_{2}})}{P[(\cont{y_{1}}{y'_{1}}{y''_{1}}{N_{1}})/x_{1},(\cont{y_{2}}{y'_{2}}{y''_{2}}{N_{2}})/x_{2}]} = \\
    \cont{y}{y_1}{y_2}{\cont{Fv(N) \setminus \{y_{1},y_{2}\}}{Fv(N_{1}) \setminus \{y_{1}',y_{1}''\}}{Fv(N_{2}) \setminus \{y_{2}',y_{2}''\}}{P[(\cont{y_{1}}{y_{1}'}{y_{1}''}{N_{1}})/x_{1},(\cont{y_{2}}{y_{2}'}{y_{2}''}{N_{2}})/x_{2}]}}
    $

which completes the proof.
\end{itemize}
\end{itemize}
\end{proof}

Since the last case of the previous lemma is a bit tricky, let us
illustrate it with the following example.

\begin{example}
  \begin{rm}
    Let $M = \cont{x}{x_{1}}{x_{2}}{x_1x_2}$ and $N = y_1y_2$. Then\\
    $\cont{y}{y_{1}}{y_{2}}{M}\isub{N}{x} = \cont{y}{y_1}{y_2}{\cont{x}{x_{1}}{x_{2}}{x_1x_2}\isub{(y_1y_2)}{x}} \triangleq \\
    \cont{y}{y_1}{y_2}{\cont{Fv(y_1y_2)}{Fv(z_1z_2)}{Fv(w_1w_2)}{x_1x_2}[(z_1z_2)/x_{1},(w_1w_2)/x_{2}]} =\\
    \cont{y}{y_1}{y_2}{\cont{y_1}{z_1}{w_1}\cont{y_2}{z_2}{w_2}{x_1x_2}[(z_1z_2)/x_{1},(w_1w_2)/x_{2}]}
    \equiv_{\rcl,(3 \times \epsilon_3)}\\
    \cont{y}{y_1}{y_2}{} \cont{y_1}{z_1}{z_2}{}
    \cont{y_2}{w_1}{w_2}{(z_1z_2)(w_1w_2)} = M_1$.\\
    On the other hand:\\
    $\cont{x}{x_{1}}{x_{2}}{x_1x_2}\isub{\cont{y}{y_1}{y_2}{y_1y_2}}{x} \triangleq\\
    \cont{Fv(\cont{y}{y_{1}}{y_{2}}{y_1y_2})}{Fv(\cont{y_{3}}{y_{1}}{y_{2}}{y_1y_2})}
    {Fv(\cont{y_{4}}{y_{1}}{y_{2}}{y_1y_2})}{(x_1x_2)[(\cont{y_{3}}{y_{1}}{y_{2}}{y_1y_2})/x_{1},(\cont{y_{4}}{y_{1}}{y_{2}}{y_1y_2})/x_{2}]} =\\
    \cont{y}{y_3}{y_4}{(\cont{y_{3}}{y_{1}}{y_{2}}{y_1y_2})(\cont{y_{4}}{y_{1}}{y_{2}}{y_1y_2})}.$\\
    By renaming $y_1 \to z_1$, $y_2 \to z_2$ in the first bracket, and
    $y_1 \to w_1$, $y_2 \to w_2$ in the second one we obtain:
    $\cont{y}{y_3}{y_4}{(\cont{y_{3}}{z_{1}}{z_{2}}{z_1z_2})(\cont{y_{4}}{w_{1}}{w_{2}}{w_1w_2})}.$\\
    By renaming $y_3 \to y_1$, $y_4 \to y_2$ we get
    $\cont{y}{y_1}{y_2}{(\cont{y_{1}}{z_{1}}{z_{2}}{z_1z_2})(\cont{y_{2}}{w_{1}}{w_{2}}{w_1w_2})}  = M_2.$\\
    Finally, $ M_1 \to_{\gamma_2, \gamma_3} M_2.  $
  \end{rm}

\end{example}

\subsection{Intersection types for $\rcl$}
\label{subsec:rcl-inttypes}

In this subsection we introduce an intersection type assignment
system which assigns \emph{strict types} to $\rcl$-terms. Strict
types were proposed in~\cite{bake92} and used
in~\cite{espiivetlika08} for characterisation of strong
normalisation in $\lG$-calculus.

The syntax of types is defined as follows:
$$
\begin{array}{lccl}
\textrm{Strict types}  & \tS & ::= & p \mid \tA \to \tS\\
\textrm{Types}      & \tA & ::= & \cap^n_i \tS_i
\end{array}
$$
\noindent where $p$ ranges over a denumerable set of type atoms,
and $\cap_i^n \tS_i$ stands for $\tS_1 \cap \ldots \cap
\tS_n,\;n \geq 0$.
Particularly, if $n=0$, then $\cap_i^0 \tS_i$ represents the
\emph{neutral element} for the intersection operator, denoted by $\top$.\\
We denote types with $\tA,\tB,\tC...$, strict types with
$\tS,\tT,\tU...$ and the set of all types by $\mathsf{Types}$. We
assume that the intersection operator is idempotent, commutative
and associative.  We also assume that
intersection has priority over the arrow operator. Hence, we will
omit parenthesis in expressions like $(\cap^n_i \tT_i) \to \tS$.

\begin{definition}
\label{def:typass-basis} \rule{0in}{0in}
\begin{itemize}
\item[(i)] A \emph{basic type assignment} is an expression of the form
  $x:\tA$, where $x$ is a term variable and $\tA$ is a type.
\item[(ii)] A \emph{basis} $\Gamma$ is a set $\{x_{1} : \tA_{1},
  \ldots, x_{n} : \tA_{n}\}$ of basic type assignments, where all term
  variables are different.  $Dom(\Gamma)=\{x_{1}, \ldots, x_{n}\}$.  A
  basis extension $\Gamma, x : \tA$ denotes the set $\Gamma \cup
  \{x:\tA\}$, where $x \not \in Dom(\Gamma).$
\item[(iii)] A \emph{bases intersection} is defined as:
   \[
   \Gamma \sqcap \Delta = \{x:\tA \cap \tB\;|\;x:\tA \in \Gamma \; \& \; x:\tB \in \Delta \; \& \; Dom(\Gamma) = Dom(\Delta)\}.
  \]
\item[(iv)]
$
\gtop=\{x:\top\;|\; x \in Dom(\Gamma) \}.
$
\end{itemize}
\end{definition}

In what follows we assume that the bases intersection has priority
over the basis extension, hence the parenthesis in
$\Gamma, (\Delta_1\sqcap \ldots \sqcap \Delta_n)$ will be omitted.
It is easy to show that $\gtop \sqcap \Delta = \Delta$ for
arbitrary bases $\Gamma$ and $\Delta$ that can be intersected,
hence $\gtop$ can be considered the neutral element for the
bases intersection.

%\pier{We know that when we write $`G "|-" M:`a$, we have $Dom(`G) =
%  Fv(M)$.  Thus we always merge, environments with equal domains.  By
%  the way, I wonder why we write $\sqcup$ instead of $\sqcap$.}

\begin{figure}[ht]
\centerline{ \framebox{ $
\begin{array}{c}
\\
\infer[(Ax)]{x:\tS \vdash x:\tS}{}\\ \\
\infer[(\to_I)]{\Gamma \vdash \lambda x.M:\tA \to \tS}
                        {\Gamma,x:\tA \vdash M:\tS} \quad\quad
\infer[(\to_E)]{\Gamma, \dztop \sqcap \Delta_1 \sqcap ...
\sqcap \Delta_n \vdash MN:\tS}
                    {\Gamma \vdash M:\cap^n_i \tT_i \to \tS & \Delta_0 \vdash N:\tT_0\; \ldots\; \Delta_n \vdash N:\tT_n}\\ \\
\infer[(Cont)]{\Gamma, z:\tA \cap \tB \vdash
\cont{z}{x}{y}{M}:\tS}
                    {\Gamma, x:\tA, y:\tB \vdash M:\tS} \quad\quad
\infer[(Weak)]{\Gamma, x:\top \vdash \weak{x}{M}:\tS}
                    {\Gamma \vdash M:\tS}\\
\end{array}
$ }} \caption{$\rcl \cap$: $\rcl$-calculus with intersection
types} \label{fig:typ-rcl-int}
\end{figure}

The type assignment system $\rcl \cap$ is given in Figure~\ref{fig:typ-rcl-int}.
%\noindent\textbf{Comments on the typing system.}
Notice that in the syntax of $\rcl$ there are three
kinds of variables according to the way they are introduced, namely as
a placeholder, as a result of a contraction or as a result of a
weakening. Each kind of a variable receives a specific
type. Variables as placeholders have a strict type, variables resulting from
a contraction have an intersection type and variables resulting from a
weakening have a $\top$ type.  Moreover, notice that intersection
types occur only in two inference rules. In the rule $(Cont)$ the
intersection type is created, this being \emph{the only} place where
this happens. This is justified because it corresponds to the
duplication of a variable. In other words, the control on the
duplication of variables entails the control on the introduction of
intersections in building the type of the term in question. In the
rule $(\to_E)$, intersection appears on the right hand side of~$"|-"$
sign which corresponds to the usage of the intersection type after it
has been created by the rule $(Cont)$ \pierre{or by the rule $(Weak)$
  if $n=0$}.  In this inference rule, the
role of $`D_0$ should be noticed.  It is needed only when $n=0$
to ensure that $N$ has a type, i.e.\ that $N$
is strongly normalizing. Then, in the bottom of the rule, the types
of the free variables of $N$ can be forgotten, hence all the free
variables of $N$ receive the type $\top$.  All the free
variables of the term must occur in the environment (see Lemma~\ref{lem:dom-corr}),
therefore useless
variables occur with the type $\top$.  If $n$ is not~$0$, then $`D_0$
can be any of the other environments and the type of $N$ the
associated type.  Since $`D^{\top}$ is a neutral element for $\sqcap$, then
$`D^{\top}$ disappears in the bottom of the rule.  The case for $n=0$
resembles the rules $(drop)$ and/or \textit{(K-cup)}
in~\cite{lenglescdougdezabake04} and was used to present the two
cases, $n=0$ and $n\neq 0$ in a uniform way.  In the rule $(Weak)$ the
choice of the type of $x$ is~$\top$, since this corresponds to a
variable which does not occur anywhere in~$M$.  The remaining rules,
namely $(Ax)$ and $(\to_I)$ are traditional, i.e.\ they are the same as in
the simply typed $\lambda$-calculus.  Noticed however that the type of
the variable in $(Ax)$ is a strict type.

\begin{lemma}[Domain Correspondence for $\rcl\cap$]
\label{lem:dom-corr}
Let $`G \vdash M:\tS$ be a typing judgment. Then ${x`: Dom(`G)}$
if and only if $x`: Fv(M)$.
\end{lemma}
\begin{proof}
  The rules of Figure~\ref{fig:typ-rcl-int} belong to three categories.
  \begin{enumerate}
  \item \emph{The rules that introduce a variable}. These rules are
    \emph{(Ax)}, $(Cont)$ and $(Weak)$.  One sees that the variable is
    introduced in the environment if and only it is introduced in the
    term as a free variable.
  \item \emph{The rules that remove variables}.  These rules are
    $(\to_I)$ and $(Cont)$.   One sees that the variables are
    removed from the environment if and only if they are removed  from the
    term as a free variable.
  \item \emph{The rule that does not introduce and does not remove a
    variable}.  This rule is $(\to_E)$.
  \end{enumerate}
Notice that $(Cont)$ introduces and removes variables.
\end{proof}
The Generation Lemma makes somewhat more precise the Domain
Correspondence Lemma.

\begin{lemma}[Generation lemma for $\rcl\cap$]
\label{prop:intGL} \rule{0in}{0in}
\begin{enumerate}
%\item[(i)]\quad $\;\Gamma \vdash x:\tT\;\;$ iff there exist $\tS_{i}, i = 1,\ldots, n$ such that$\;x:\tT \cap
%(\cap_{i}^{n}\tS_{i}) = \Gamma.$
%
\item[(i)]\quad $\Gamma \vdash \lambda x.M:\tT\;\;$ iff there
exist $`a$ and $`s$ such that $\;\tT\equiv \tA\rightarrow \tS\;\;$
and $\;\Gamma,x:\tA \vdash M:\tS.$
\item[(ii)]\quad $\Gamma \vdash MN:\tS\;\;$ iff
and there exist $`D_i$ and $\tT_i,\;i  = 0, \ldots, n$ such that
$\Gamma' \vdash M:\cap_{i}^{n}\tT_i\to \tS$ and for all $i \in
\{0, \ldots, n\}$, $\;\Delta_{i} \vdash N:\tT_i$ and $\;\Gamma=\Gamma',
\dztop \sqcap \Delta_{1} \sqcap \ldots \sqcap \Delta_{n}$.
\item[(iii)]\quad $\Gamma \vdash \cont{z}{x}{y}{M}:\tS\;\;$ iff
there exist $`G', `a, `b$ such that
$\;\Gamma=\Gamma',z:`a\cap`b$ \\
and $\;\Gamma', x:\tA, y:\tB \vdash M:\tS.$
\item[(iv)]\quad $\Gamma \vdash \weak{x}{M}:\tS\;\;$ iff
$\;\Gamma=\Gamma', \jel{x:\top}$ and $\;\Gamma' \vdash M:\tS.$
\end{enumerate}
\end{lemma}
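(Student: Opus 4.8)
The plan is to prove each biconditional by \emph{inversion} on the type assignment rules of Figure~\ref{fig:typ-rcl-int}, exploiting the fact that the system $\rcl\cap$ is syntax-directed. For each compound term constructor---abstraction, application, contraction and weakening---there is exactly one rule, namely $(\to_I)$, $(\to_E)$, $(Cont)$ and $(Weak)$ respectively, whose subject in the conclusion carries that outermost constructor. Consequently, any derivation ending in a judgment $\Gamma \vdash M:\tS$ whose subject $M$ has one of these shapes must have, as its last applied rule, precisely the matching rule; crucially, there is no separate intersection-introduction or intersection-elimination rule that could have been applied last, which is exactly what reduces the generation lemma to a routine inversion here.

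For the right-to-left direction of each item the argument is immediate: given the stated premises, a single application of the corresponding rule produces the desired judgment. For instance, in (iii), from $\Gamma', x:\tA, y:\tB \vdash M:\tS$ the rule $(Cont)$ yields $\Gamma', z:\tA \cap \tB \vdash \cont{z}{x}{y}{M}:\tS$, and symmetrically for the other three cases.

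For the left-to-right direction I would, in each case, inspect the last rule of the given derivation. Since that rule is forced by the shape of the subject, the premises and the form of $\Gamma$ and of the type can simply be read off. In (i) the conclusion of $(\to_I)$ has the form $\Gamma \vdash \lambda x.M:\tA \to \tS$, so $\tT$ must be an arrow type, whence the witnesses $\tA$ and $\tS$ with $\tT \equiv \tA \to \tS$ and the premise $\Gamma, x:\tA \vdash M:\tS$; the equivalence $\equiv$ absorbs the idempotence, commutativity and associativity of $\cap$ in the domain type. In (iv) the conclusion of $(Weak)$ forces $\Gamma = \Gamma', x:\top$ together with the premise $\Gamma' \vdash M:\tS$. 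The only case requiring mild care is (ii): the rule $(\to_E)$ carries the $n+1$ premises $\Gamma' \vdash M:\cap_{i}^{n}\tT_i \to \tS$ and $\Delta_i \vdash N:\tT_i$ for $i = 0, \ldots, n$, together with $\Gamma = \Gamma', \dztop \sqcap \Delta_1 \sqcap \ldots \sqcap \Delta_n$; one must keep track of the distinguished premise $\Delta_0 \vdash N:\tT_0$ and of the fact that $\dztop$ contributes in the conclusion only the $\top$-typing of the free variables of $N$, which is precisely the behaviour needed in the degenerate case $n = 0$. I expect no genuine obstacle: the whole content is that syntax-directedness collapses the usual search over possible derivations to a single forced rule, so the lemma follows by a four-way case analysis (one case per compound constructor), the bookkeeping of the application rule being the only slightly delicate point.
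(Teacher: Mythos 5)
Your proof is correct and follows exactly the route the paper takes: the paper's own proof consists of the single observation that all rules of $\rcl\cap$ are syntax-directed, and your proposal is just that observation carried out explicitly (inversion on the forced last rule for the left-to-right directions, one rule application for the right-to-left directions). No discrepancy to report.
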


\begin{proof} The proof is straightforward since all the rules are syntax directed.
\end{proof}

The proposed system satisfies the following properties.

%\pierre{
%\begin{proposition}
%\label{prop:bases-int} \rule{0in}{0in}
%If $\;\Gamma_1 \vdash
%M:\tS\;$ and $\;\Gamma_2 \vdash M:\tS\;$, then $\;\Gamma_1 \cap
%\Gamma_2 \vdash M:\tS\;$.
%\end{proposition}
%}

%\begin{proof}
%\ghi{Axiom case}
%\end{proof}

%\ive{ \newpage Example: Look at the term $M \equiv
%\cont{x}{x_1}{x_2}{\weak{x_1}{\cont{x_2}{y}{z}{yz}}}$. By
%$\gamma\omega_2$ it reduces to $\cont{x_2}{y}{z}{yz}\isub{x}{x_2}
%\triangleq \cont{x}{y}{z}{yz} \equiv N$.\\
%Subject reduction here works, since $x:(\tT \to \tS) \cap \tT
%\vdash M:\tS$ and $x:(\tT \to \tS) \cap \tT \vdash N:\tS$.\\
%According to current substitution lemma, for concluding $x:(\tT
%\to \tS) \cap \tT \vdash \cont{x_2}{y}{z}{yz}\isub{x}{x_2}:\tS$ we
%need assumptions $x_2:(\tT \to \tS) \cap \tT \vdash
%\cont{x_2}{y}{z}{yz}:\tS$ and $x:\tT \vdash x:\tT$ and $x:\tT \to
%\tS \vdash x:tT \to
%\tS$. This is ok, right? \\
%This particular problem comes from the fact that we're
%substituting an "artificial" for a "natural" variable, and can be
%solved by making difference between renaming and substitution, so
%that substitution lemma does not apply to renaming. But we have to
%be sure whether problems appear only when substituting a variable
%of one kind for a variable of another kind
%}

\begin{lemma}[Substitution lemma for $\rcl\cap$]
\label{prop:sub-lemma} If $\;\Gamma, x:\cap_i^{n} \tT_i \vdash
M:\tS\;$ and for all $i \in \{\jel{0}, \ldots, n\}$, $\;\Delta_i
\vdash N:\tT_i$, then $\;\Gamma,\jel{\dztop \sqcap \Delta_1}
\sqcap ... \sqcap \Delta_n \vdash M\isub{N}{x}:\tS.$
\end{lemma}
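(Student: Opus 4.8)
The plan is to argue by induction on the structure of the term $M$, following the case analysis of the substitution operator $\isub{\;}{\;}$ in Figure~\ref{fig:sub-rcl}. Since the system of Figure~\ref{fig:typ-rcl-int} is syntax directed, each shape of $M$ determines the last rule of the derivation of $\Gamma, x:\cap_i^n \tT_i \vdash M:\tS$, which I expose using the Generation Lemma~\ref{prop:intGL}. Throughout I exploit linearity: $x$ occurs exactly once in $M$, so it lives in the context of exactly one immediate subterm; moreover the Domain Correspondence Lemma~\ref{lem:dom-corr} gives $Dom(\Delta_i) = Fv(N)$ for every $i$, so all the bases intersections occurring in the statement and in the $(\to_E)$ rule are well formed. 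The role of the extra index $i=0$ is to keep $N$ typeable when the intersection is empty ($n=0$), its type being forgotten in the conclusion but its domain recorded through $\dztop$.

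The base case $M = x$ forces an instance of $(Ax)$, hence $n=1$, $\tT_1 = \tS$, $\Gamma$ empty, and $M\isub{N}{x} \triangleq N$; the desired judgement $\dztop \sqcap \Delta_1 \vdash N:\tS$ is just $\Delta_1 \vdash N:\tS$ because $\dztop$ is neutral for $\sqcap$ and has the same domain $Fv(N)$. For the abstraction, application, the weakening $M=\weak{y}{M'}$ with $x\neq y$, and the contraction $M=\cont{y}{y_1}{y_2}{M'}$ with $x\neq y$, I peel off the last rule with the Generation Lemma, push the substitution inside according to Figure~\ref{fig:sub-rcl}, apply the induction hypothesis to the unique subterm carrying $x$ (in the application case the other factor is left untouched, since $x$ occurs in only one of them), and reapply the same typing rule. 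These cases are routine context bookkeeping.

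The case $M = \weak{x}{M'}$ is where $x$ is erased: the last rule is $(Weak)$, so $x:\top$ in the context and $n=0$, while $M\isub{N}{x} \triangleq \weak{Fv(N)}{M'}$. Starting from $\Gamma \vdash M':\tS$ I apply $(Weak)$ once for each $v \in Fv(N)$, obtaining $\Gamma, \{v:\top \mid v \in Fv(N)\} \vdash \weak{Fv(N)}{M'}:\tS$; since $Dom(\Delta_0) = Fv(N)$, the added context is exactly $\dztop$, which is the required conclusion when $n=0$.

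The main obstacle is the contraction-erasing case $M = \cont{x}{x_1}{x_2}{M'}$, where $x:\tA\cap\tB = \cap_i^n\tT_i$ and $M\isub{N}{x} \triangleq \cont{Fv(N)}{Fv(N_1)}{Fv(N_2)}{M'[N_1/x_1,N_2/x_2]}$ with $N_1,N_2$ fresh copies of $N$. By the Generation Lemma I obtain $\Gamma, x_1:\tA, x_2:\tB \vdash M':\tS$, and I split $\{1,\dots,n\}$ into the part $J$ with $\tA=\cap_{j\in J}\tT_j$ and the part $K$ with $\tB=\cap_{k\in K}\tT_k$. Renaming the derivations $\Delta_j \vdash N:\tT_j$ along the copy $N\mapsto N_1$ produces $\Delta_j^{(1)} \vdash N_1:\tT_j$ with $Dom(\Delta_j^{(1)})=Fv(N_1)$, and symmetrically $\Delta_k^{(2)} \vdash N_2:\tT_k$ on $Fv(N_2)$. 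Reading the parallel substitution as $M'[N_1/x_1,N_2/x_2] = (M'\isub{N_1}{x_1})\isub{N_2}{x_2}$ of Definition~\ref{def:par-sub}, I apply the induction hypothesis twice (first for $x_1$, then for $x_2$), typing $M'[N_1/x_1,N_2/x_2]$ in $\Gamma$ together with $\bigsqcap_{j\in J}\Delta_j^{(1)}$ on $Fv(N_1)$ and $\bigsqcap_{k\in K}\Delta_k^{(2)}$ on $Fv(N_2)$, the $\top$-summands from the $i=0$ slots being absorbed by neutrality of $\gtop$. Finally I apply $(Cont)$ once for each $v\in Fv(N)$, merging the copies $v_1\in Fv(N_1)$ and $v_2\in Fv(N_2)$ back to $v$. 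The delicate point to verify is that this split-then-recombine procedure reconstitutes precisely $\dztop \sqcap \Delta_1 \sqcap \dots \sqcap \Delta_n$ on $Fv(N)$: the intersection type each $v$ receives from $(Cont)$ must coincide with its type in the original family, so that splitting $\tA\cap\tB$ across the two copies and contracting the renamed bases is exactly inverse to the distribution of intersections in the hypotheses. This matching across the copy and contraction steps is the crux of the argument, and it is the very phenomenon illustrated by the preceding example.
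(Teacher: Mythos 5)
Your proposal is correct and follows essentially the same route as the paper's own proof: induction on the structure of $M$ via the Generation Lemma, with the base case forcing $n=1$, the case $M=\weak{x}{M'}$ handled by repeated applications of $(Weak)$ to build $\dztop$, and the case $M=\cont{x}{x_1}{x_2}{M'}$ handled by splitting the intersection, renaming the $\Delta_i$ along the two copies of $N$, applying the induction hypothesis twice, and recombining with $(Cont)$ on corresponding variables. The point you single out as the crux is exactly the step the paper also emphasises.
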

\jel{\begin{proof} \rule{0in}{0in} The proof is by induction on
the structure of the term $M$. We only show the interesting cases.
\begin{itemize}
 \item Base case $M = x$. By the axiom $x:\tT \vdash x:\tT$ where
 $\tT = \cap_i^1\tT_i$, i.e.\ $n=1$, hence the second assumption
 is $\;\Delta \vdash N:\tT$ which proves the case since $N
 \triangleq x\isub{N}{x}$.
 \item $M = \weak{x}{P}$. Now we assume $\Gamma, x:\cap_i^0\tT_i
 \vdash \weak{x}{P}:\tS$ and $\Delta_i \vdash N:\tT_i$ for all $i
 \in \{0,\ldots,0\}$, in other words $\Gamma, x:\top
 \vdash \weak{x}{P}:\tS$ and $\Delta_0 \vdash N:\tT_0$ (i.e.\ $N$ is typeable). By
 Generation lemma~\ref{prop:intGL}$(iv)$ we get $\Gamma \vdash
 P:\tS$. Since $Dom(\Delta_0) = Fv(N)$ by applying the $(Weak)$
 rule multiple times we get $\Gamma,\dztop \vdash \weak{Fv(N)}{P} \vdash \tS$
 which is exactly what we want to prove.
 \item $M = \cont{x}{x_1}{x_2}{P}$. From $\Gamma, x:\cap_i^n\tT_i
 \vdash \cont{x}{x_1}{x_2}{P}:\tS$, by  Generation
 lemma~\ref{prop:intGL}$(iii)$ we get that $\cap_i^n\tT_i = \cap_{i=1}^m\tT_i \cap \cap_{i=m+1}^n\tT_i$ for some $m<n$ and $\Gamma,
 x_1:\cap_i^m\tT_i, x_2:\cap_{i=m+1}^n\tT_i \vdash P:\tS$. From
 the other assumption $\Delta_i \vdash N:\tT_i$ for all $i
 \in \{1,\ldots,n\}$, by renaming the variables in $\Delta_i$
 (i.e.\ the free variables of $N$) we get two different sets of
 sequents: $\Delta'_j \vdash N_1:\tT_j$ for $j=
 1,\ldots,m$ and $\Delta''_k \vdash N_2:\tT_k$ for $k=m+1,\ldots,n$.
 By applying IH twice, we get $\Gamma,{\Delta'}_0^{\top} \sqcap \Delta'_1 \sqcap
 \ldots \sqcap \Delta'_m, {\Delta''}_0^{\top} \sqcap \Delta''_{m+1} \sqcap
 \ldots \sqcap \Delta''_n \vdash (P
 \isub{N_1}{x_1})\isub{N_2}{x_2}:\tS$. Now, we apply the definition of the parallel substitution, and perform contraction
 on all pairs of corresponding (i.e.\ obtained by the renaming of the same variable) elements of $\Delta'_j$ and
 $\Delta''_k$ by introducing again the original names of the free variables of $N$ from $\Delta_i$ and finally get what we need:
\[\Gamma,\dztop \sqcap \Delta_1 \sqcap
 \ldots \sqcap \Delta_n \vdash
 \cont{Fv(N)}{Fv(N_1)}{Fv(N_2)}P[N_1/x_1,N_2/x_2]:\tS.
 \]
\end{itemize}
\end{proof}
}

\begin{proposition}[Subject reduction and equivalence]
\label{prop:sr} For every $\rcl$-term $M$: if $\;{\Gamma \vdash
M:\tS}\;$ and $M \to M'$ or $M\equiv M$, then $\;\Gamma \vdash
M':\tS.$
\end{proposition}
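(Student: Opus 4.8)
The plan is to prove Subject Reduction (and invariance under the equivalences) by a case analysis on the reduction rules and equivalences, using the Generation Lemma to decompose a given typing derivation and then reassembling a derivation of the same type for the reduct. Since the reduction relation is a congruence closure, I first reduce to the base cases: it suffices to handle each rule in Figure~\ref{fig:red-rcl} and each equivalence in Figure~\ref{fig:equiv-rcl} at the root, because the typing rules are syntax-directed (Lemma~\ref{prop:intGL}) and the type of a subterm is preserved in the type of the whole term, so a redex contracted deep inside a term can be handled by an induction on term structure that replaces the derivation of the redex's subterm by the derivation of its contractum. Throughout I will freely use the Domain Correspondence Lemma~\ref{lem:dom-corr} to keep track of which free variables appear in the environment.

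First I would dispatch the key computational rule $(\beta)$. Assuming $\Gamma \vdash (\lambda x.M)N : \tS$, I apply Generation~\ref{prop:intGL}$(ii)$ to split $\Gamma = \Gamma', \dztop \sqcap \Delta_1 \sqcap \ldots \sqcap \Delta_n$ with $\Gamma' \vdash \lambda x.M : \cap_i^n \tT_i \to \tS$ and $\Delta_i \vdash N : \tT_i$ for all $i$; then Generation~\ref{prop:intGL}$(i)$ gives $\Gamma', x:\cap_i^n\tT_i \vdash M : \tS$. At this point the Substitution Lemma~\ref{prop:sub-lemma} is exactly what is needed: it yields $\Gamma', \dztop \sqcap \Delta_1 \sqcap \ldots \sqcap \Delta_n \vdash M\isub{N}{x} : \tS$, which is $\Gamma \vdash M\isub{N}{x}:\tS$ as desired. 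This is the step where the real content lives, but it has been isolated into the already-proved Substitution Lemma, so here it is routine.

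The remaining cases are the $(\gamma)$, $(\omega)$, and $(\gamma\omega)$ rules, which only permute resource operators with term constructors and do not touch the underlying $\lambda$-term skeleton. For each such rule I would peel off the relevant typing rules on the left-hand side via the Generation Lemma and then rebuild the derivation for the right-hand side, checking that the environment and type are unchanged. For instance, for $(\omega_2)$, from $\Gamma \vdash (\weak{x}{M})N : \tS$ I strip the $(\to_E)$ and $(Weak)$ rules and re-introduce the weakening after the application, using that $x:\top$ contributes trivially under $\sqcap$ because $\top$ is the neutral element for intersection and $\dztop$ absorbs into it. The contraction-permuting rules $(\gamma_1)$--$(\gamma_3)$ and $(\gamma\omega_1)$ are handled similarly, moving the $(Cont)$ rule past $(\to_I)$ or $(\to_E)$; here one must verify that the intersection type $\tA \cap \tB$ introduced by $(Cont)$ ends up in the same environment position. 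The genuinely delicate case is $(\gamma\omega_2)$, $\cont{x}{x_1}{x_2}{(\weak{x_1}{M})} \to M\isub{x}{x_2}$, where a contraction meets a weakening on one of its bound variables: from $(Cont)$ we have $x_1:\tA$ and $x_2:\tB$ available, and $(Weak)$ forces $\tA = \top$, so $x:\top \cap \tB$ collapses to $x:\tB$ and the substitution $\isub{x}{x_2}$ merely renames $x_2$ to $x$, which the Substitution Lemma (or a direct renaming argument) certifies preserves the type.

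I expect the main obstacle to be the equivalences in Figure~\ref{fig:equiv-rcl}, particularly $(\varepsilon_3)$ and $(\varepsilon_4)$, which rearrange nested contractions. Because $(Cont)$ creates intersection types and we have assumed intersection is idempotent, commutative, and associative, invariance under these equivalences should reduce precisely to the associativity and commutativity of $\cap$ in the type assigned to the shared variable: reassociating $\cont{x}{y}{z}{(\cont{y}{u}{v}{M})}$ as in $(\varepsilon_3)$ corresponds to reassociating an intersection $\tA \cap (\tB \cap \tC)$, which leaves the derived type literally equal modulo the assumed equational theory on types. The bookkeeping of which subtype attaches to which of the freshly named bound variables is where care is required, but no new typing principle is needed beyond the Generation Lemma and the algebraic laws for $\cap$. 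The whole proposition then follows by congruence closure over these base cases.
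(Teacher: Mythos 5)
Your proposal follows essentially the same route as the paper's proof: reduce to root-position redexes by stability under contexts, dispatch $(\beta)$ via Generation Lemma~\ref{prop:intGL}(ii) then (i) followed by the Substitution Lemma~\ref{prop:sub-lemma}, treat the permuting $(\gamma)$/$(\omega)$ rules as routine rearrangements of syntax-directed derivations, and single out $(\gamma\omega_2)$ as the delicate case where $(Weak)$ forces the type $\top$ so that $x:\top\cap\tB$ collapses to $x:\tB$ and the substitution is handled by the Substitution Lemma. Your additional remarks on the equivalences $(\varepsilon_3)$ and $(\varepsilon_4)$ reducing to associativity and commutativity of $\cap$ are consistent with what the paper leaves implicit.
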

\begin{proof}
The proof is done by the case analysis on the applied reduction. Since
 the property is stable by context, we can without losing
 generality assume that the
 reduction takes place at the outermost position of the term.
 Here we just show several cases. We will use GL as an abbreviation for Generation lemma~\ref{prop:intGL}.
  \begin{itemize}
  \item Case $(\beta)$: Let $\Gamma \vdash (\lambda x.M)N:\tS$. We
  want to show that $\Gamma \vdash M\isub{N}{x}:\tS$.
  From $\Gamma \vdash (\lambda x.M)N:\tS\;$ and from GL(ii) it follows that
  $\Gamma = \Gamma',\dztop \sqcap \Delta_1 \sqcap \ldots \sqcap \Delta_n$,  and that there is a type $\cap_i^{n} \tT_i$ such that
  for all $i=0, \ldots, n$, $\Delta_i \vdash N:\tT_i,\;$ and
  $\Gamma' \vdash \lambda x.M:\cap_i^n\tT_i \to \tS$. Further,
  by GL(i) we have that $\Gamma',x: \cap_i^n\tT_i \vdash M:\tS$.
  Now, all the assumptions of Substitution lemma~\ref{prop:sub-lemma}
  hold, yielding $\Gamma',\dztop \sqcap \Delta_1 \sqcap \ldots \sqcap \Delta_n \vdash M\isub{N}{x}:\tS$
  which is exactly what we need, since $\Gamma = \dztop \sqcap \Gamma',\Delta_1 \sqcap \ldots \sqcap \Delta_n$.
  \item Case $(\gamma\omega_2)$: Let $\Gamma \vdash
 \cont{x}{x_1}{x_2}{\weak{x_1}{M}}:\tS$.
 We are showing that $\Gamma \vdash M\isub{x}{x_2}:\tS$.\\
 From the first sequent by GL(iii) we have that $\Gamma =
 \Gamma',x:\tA \cap \tB$ and $\Gamma', x_1:\tA,x_2:\tB
 \vdash\weak{x_1}{M}:\tS$. Further, by GL(iv) we conclude that $\tA \equiv \top$, $x:\top \cap \tB \equiv \tB$ and $\Gamma', x_2:\tB \vdash
 M:\tS$. \jel{Since $\tB= \cap_i^n \tT_i$ for some $n \geq 0$, by applying Substitution lemma~\ref{prop:sub-lemma} to  $\Gamma', x_2:\tB \vdash
 M:\tS$ and $x:\tT_i \vdash x:\tT_i,\;i=0,\ldots,n$} we
 get $\Gamma \vdash M\isub{x}{x_2}:\tS$.
 \item The other rules are easy since they do not essentially
 change the structure of the term.
\end{itemize}
\end{proof}

Due to this property, equivalent terms have the same type.
\subsection{Typeability $\Rightarrow$ SN in $\rcl \cap$}
\label{sec:reducibility}

In various type assignment systems, the \emph{reducibility method} can be used to prove many reduction properties of typeable terms.
It was first introduced by Tait~\cite{tait67} for proving the strong normalisation
of simply typed $\lambda$-calculus, and developed further to prove \emph{strong normalisation} of various calculi in~\cite{tait75,gira71,kriv90,ghil96,ghillika02}, \emph{confluence} (the Church-Rosser property)
of $\beta \eta$-reduction in~\cite{kole85,statI85,mitc90,mitc96,ghillika02}
and to characterise certain classes of $\lambda$-terms
such as strongly normalising, normalising, head normalising,
and weak head normalising terms (and their persistent versions) by their typeability in various intersection type systems in~\cite{gall98,dezahonsmoto00,dezaghil02,dezaghillika04}.

The main idea of the reducibility method is to interpret types by suitable sets of lambda terms which satisfy some realizability properties
%connect the
%terms typeable in a certain type assignment system with the terms satisfying certain reduction properties
%(e.g., strong normalisation, confluence).
%To this aim, types are interpreted by suitable sets of lambda terms which satisfy certain
%realizability properties.
and prove the soundness of type assignment with respect to these interpretations.
A consequence of soundness is that every typeable term belongs to the interpretation of its type, hence satisfying a desired reduction property.

In the remainder of the paper we consider $\LR$ as the {\em applicative structure\/} whose domain are $\rcl$-terms and where the application is just the application of $\rcl$-terms.
The set of \emph{strongly normalizing terms} is defined as the smallest subset of $\LR$ such that:

\begin{center}
  \prooftree M' \in \SN \qquad M \rightarrow M' %
  \justifies M \in \SN
  \endprooftree
\end{center}

\begin{definition}
\label{def:fsto}
For $\vM, \vN \subseteq \LR$, we define $\vM \fsto \vN \subseteq
\LR$ as
$$\vM \fsto \vN =  \{N  \in \LR \mid \forall M \in \vM \quad NM \in \vN )\}.$$
\end{definition}

\begin{definition}

\label{def:typeInt} The type interpretation  $\ti{-} : \mathsf{Types} \to
2^{\LR}$ is defined by:
%  \begin{itemize}
%    \item[($I 1$)] $\ti{p} = \SN$, where $p$ is a type atom, \pierre{including $\top$};
%    \item[($I 2$)] $\sil{\ti{\tS \cap \tT}  = \ti{\tS} \cap \ti{\tT}}$;
%    \item[($I 3$)] $\ti{\tA \to \tS} = \ti{\tA} \fsto \ti{\tS}$;
%\end{itemize}
\begin{itemize}
    \item[($I 1$)] $\ti{p} = \SN$, where $p$ is a type atom;
    \item[($I 2$)] $\ti{\tA \to \tS} = \ti{\tA} \fsto \ti{\tS}$;
    \item[($I 3$)] $\ti{\cap^n_i \tS_i}  = \cap^n_i \ti{\tS_i}$  and $\ti{\cap^0_i \tS_i}  = \SN$.
\end{itemize}

\end{definition}

%Notice that the interpretation of $\ti{\top}=\SN$ since $\top$ is a type of every variable and variables are in $\SN$. Also, no term has type $\top$.
%%, hence its interpretation is not $\Lambda$.

Next, we introduce the notions of \emph{variable property, reduction property,
expansion property,} \emph{weakening property} and \emph{contraction property.}
Variable property and expansion property correspond to the saturation property given in~\cite{bare92}, whereas reduction property corresponds to the property CR~2 in Chapter 6 of~\cite{GirardLafontTaylor89}.
To this aim we
will use the following notation: recall that $\circledR$ denotes the set of reductions
given in Figure~\ref{fig:red-rcl}. If $`m \in \circledR$, then $\rdx_{`m}^{\varsigma}$
denotes a redex, that is a term which is an instance by the meta-substitution $\varsigma$
of the left hand side of the reduction $`m$. Whereas $\ctr_{`m}^{\varsigma}$ denotes the instance
of the right hand side of the same reduction $`m$ by the same meta-substitution
$\varsigma$.\footnote{Meta-substitution is a substitution that assigns values to
  meta-variables.}

\begin{definition}\label{def:var+sat}
\rule{0in}{0in}
\begin{itemize}
\item A set $\vX \subseteq \LR$ satisfies the \emph{variable property}, notation
$\VAR(\vX)$, if  $\vX$ contains all the terms of the form $x M_1 \ldots M_n$ for $M_i`:
\SN$.
\item A set $\vX \subseteq \LR$ satisfies the \emph{reduction property},
  notation $\RED(\vX)$, if $\vX$ is stable by reduction, in other words $M`:\vX$ and $M"->"M'$ imply
  $M'`:\vX$.
\item A set $\vX \subseteq \LR$ satisfies the \emph{expansion property}, notation
  $\SAT_{`m}(\vX)$ where $`m$ is a rule in $\circledR$, if\footnote{Notice that we do not need a
          condition that $N \in \SN$ in $\SAT_{\beta}(\vX)$, as in ordinary $\lambda$-calculus, since we only work with linear
          terms, hence if the contractum $M\isub{N}{x} \in \SN$, then $N \in \SN$.}: %
% \item $X\subseteq \SN.$
%   \item $\VAR(\vX)$:
%       \begin{center}
%         \prooftree M_1 \in \SN \ldots M_n \in \SN%
%         \using \VAR(\vX)%
%         \justifies x M_1 \ldots M_n \in \vX.
%         \endprooftree
%      \end{center}
%  \begin{itemize}
    % \item $\SAT_{\beta}(\vX)$:\footnote{\scriptsize{Notice that we do not need a
    %   condition that $N \in \SN$ in $\SAT_{\beta}(\vX)$ since we only work with linear
    %   terms, hence if the contractum $M\isub{N}{x} \in \SN$, then $N \in \SN$.}}
    %   $ (\forall n \geq 0) \, (\forall M_1, \ldots, M_n \in \SN)\,$\\
    %   $ \hspace*{10mm} M\isub{N}{x} M_1 \ldots M_n \in \vX \; \Rightarrow \; (\lambda
    %   x.M)N M_1 \ldots M_n \in \vX. $

%  \item $\SAT_{`m}(\vX)$
      % \lik{I think this one can be unified with $\SAT_{\beta}(\vX)$.}
      \begin{center}
        \prooftree%
        M_1\in \SN \ldots M_n \in \SN \qquad \ctr_{`m}^\varsigma \ M_1 \ldots M_n \in \vX %
        \using \SAT_{`m}(\vX) %
        \justifies \rdx_{`m}^{\varsigma} \ M_1 \ldots M_n \in \vX.
        \endprooftree
      \end{center}
%  \end{itemize}
\item A set $\vX \subseteq \LR$ satisfies the \emph{weakening property}, notation $\WEAK(\vX)$ if:
    \begin{center}
      \prooftree M \in \vX %
      \using \WEAK(\vX) %
      \justifies \weak{x}{M} \in \vX.
      \endprooftree
    \end{center}
\item A set $X \subseteq \LR$ satisfies the \emph{contraction property}, notation $\CONT(\vX)$ if:
      \begin{center}
        \prooftree %
        M \in \vX %
        \using \CONT(\vX) %
        \justifies \cont{x}{y}{z}{M} \in \vX.
        \endprooftree
      \end{center}
  \end{itemize}
\end{definition}

\noindent\textbf{Remark.} In the previous definition (Definition~\ref{def:var+sat}) it is not necessary to explicitly write the conditions about free variables since we work with  $\rcl$-terms.

\begin{definition}[$\circledR$-Saturated set] \label{lem:saturSet}
A set $\vX \subseteq \LR$ is called \emph{$\circledR$-saturated},
if
\begin{itemize}
\item $\vX\subseteq \SN$ and
\item $\vX$ satisfies the variable, reduction, expansion, weakening and contraction properties.
\end{itemize}
\end{definition}

% \begin{proposition}[Saturated sets are stable under reduction] \label{prop:red-stability}
% If $P \in \vM$, $\vM$ is $\circledR$-saturated and $P \rightarrow P'$, then $P' \in \vM$.
% \end{proposition}

\begin{proposition} \label{prop:saturSets}
Let $\vM, \vN \subseteq \LR$.
\begin{itemize}
    \item[(i)] $\SN$ is $\circledR$-saturated.
    \item[(ii)] If $\vM$ and $\vN$ are $\circledR$-saturated, then $\vM \fsto \vN$ is $\circledR$-saturated.
    \item[(iii)] If $\vM$ and $\vN$ are $\circledR$-saturated, then $\vM \cap \vN$ is $\circledR$-saturated.
    \item[(iv)] For all types $\varphi \in \tlam$, $\ti{\varphi}$ is $\circledR$-saturated.
\end{itemize}
\end{proposition}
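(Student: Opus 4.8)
The plan is to prove the four items of Proposition~\ref{prop:saturSets} in the order stated, since each relies on the previous ones, with item~(iv) following by a routine induction on the structure of types once (i)--(iii) are in place.

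First I would prove (i), that $\SN$ is $\circledR$-saturated. Here $\SN \subseteq \SN$ is immediate, and the five defining properties must be checked. The variable property $\VAR(\SN)$ holds because a term $xM_1\ldots M_n$ with all $M_i \in \SN$ can only reduce by reducing inside some $M_i$ (there is no head redex, as the head is a variable), so every reduction sequence is bounded by the sum of the lengths of the reduction sequences of the $M_i$; this is a standard argument. The reduction property $\RED(\SN)$ is trivial, since a reduct of a strongly normalising term is strongly normalising. For the weakening and contraction properties, I would argue that $\weak{x}{M}$ and $\cont{x}{y}{z}{M}$ are strongly normalising whenever $M$ is: any infinite reduction from $\weak{x}{M}$ or $\cont{x}{y}{z}{M}$ must either eventually perform infinitely many steps inside $M$ (contradicting $M \in \SN$) or propagate the weakening/contraction operator via the $(\omega)$, $(\gamma)$, $(\gamma\omega)$ rules, which strictly decrease a suitable structural measure (the operator moves toward the leaves or is consumed), so the operator-propagation steps cannot be iterated infinitely without interleaving $M$-internal steps. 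The expansion property $\SAT_{`m}(\SN)$ is the delicate one: given $M_1,\ldots,M_n \in \SN$ and $\ctr_{`m}^\varsigma M_1 \ldots M_n \in \SN$, I must show $\rdx_{`m}^{\varsigma}M_1\ldots M_n \in \SN$. The essential case is $`m = \beta$, where $\rdx = (\lambda x.M)N$ and $\ctr = M\isub{N}{x}$; the footnote in the excerpt is crucial here, noting that since we work with linear terms, $M\isub{N}{x} \in \SN$ already forces $N \in \SN$, so no separate hypothesis on $N$ is needed. The standard argument tracks a hypothetical infinite reduction from $(\lambda x.M)N M_1\ldots M_n$ and shows that contracting the head $\beta$-redex produces a term from which an equally long reduction exists, contradicting $\ctr^\varsigma_\beta M_1 \ldots M_n \in \SN$.

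Next I would prove (ii) and (iii). For (iii), intersection: $\vM \cap \vN \subseteq \SN$ follows from $\vM \subseteq \SN$; and each of the five properties passes through an intersection straightforwardly because membership in $\vM \cap \vN$ means membership in both, and each property is stated as an implication preserved under conjunction (for instance $M \in \vM \cap \vN$ gives $\weak{x}{M} \in \vM$ and $\weak{x}{M} \in \vN$ by $\WEAK(\vM),\WEAK(\vN)$, hence $\weak{x}{M} \in \vM \cap \vN$). For (ii), the arrow set $\vM \fsto \vN$, inclusion in $\SN$ uses the variable property of $\vM$: taking a fresh variable $x \in \vM$ (which lies in $\vM$ by $\VAR(\vM)$ with $n=0$), any $N \in \vM \fsto \vN$ satisfies $Nx \in \vN \subseteq \SN$, and $N$ is a subterm of $Nx$, so $N \in \SN$. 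The reduction property is easy by Definition~\ref{def:fsto}. The weakening, contraction, and expansion properties of $\vM \fsto \vN$ are reduced to the corresponding properties of $\vN$ by applying the definition of $\fsto$: to show $\weak{x}{N} \in \vM \fsto \vN$ one takes arbitrary $M \in \vM$ and must show $(\weak{x}{N})M \in \vN$; here the reduction $(\omega_2)$ rewrites $(\weak{x}{N})M$ to $\weak{x}{(NM)}$, and using $\WEAK(\vN)$ together with $\RED$/expansion one transfers membership. The analogous transfers for $\CONT$ and $\SAT_{`m}$ use the $(\gamma)$ and expansion rules plus the variable property of $\vM$ to supply test arguments.

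Finally, (iv) follows by induction on the structure of the type $\varphi$, splitting on whether $\varphi$ is a strict type or an intersection. A type atom $p$ has $\ti{p} = \SN$, saturated by (i); an arrow $\tA \to \tS$ has $\ti{\tA \to \tS} = \ti{\tA} \fsto \ti{\tS}$, saturated by (ii) and the induction hypothesis; and an intersection $\cap^n_i \tS_i$ with $n \geq 1$ is saturated by (iii) and the induction hypothesis, while the empty intersection $\cap^0_i \tS_i = \top$ has $\ti{\top} = \SN$, again saturated by (i). I expect the main obstacle to be item~(i), specifically establishing the expansion property $\SAT_\beta(\SN)$ and verifying that the operator-propagation rules preserve strong normalisation; these require a careful measure argument rather than a one-line observation, whereas (ii)--(iv) are essentially bookkeeping once the closure properties of $\SN$ are secured.
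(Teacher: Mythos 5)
Your proposal follows essentially the same route as the paper: (i) is proved by checking each closure property of $\SN$ directly (with the linearity observation supplying $N\in\SN$ in the $\beta$-expansion case), (ii) and (iii) transfer each property through the definition of $\fsto$ and of intersection using the corresponding property of $\vN$ (expansion via $\omega_2$ for $\WEAK$, reduction via $\gamma_2$ for $\CONT$, and the variable property of $\vM$ to get $\vM\fsto\vN\subseteq\SN$), and (iv) is the expected induction on types. The only divergence is cosmetic: for $\CONT(\SN)$ the paper argues by structural induction on $M$ rather than by your interleaving/measure argument, but both are at the same level of detail and establish the same fact.
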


%%---

\begin{proof}
\rule{0in}{0in}
%\begin{itemize}
%
%\item[(i)]
(i)
\begin{itemize}
\item
$\SN \subseteq \SN$,  $\VAR(\SN)$ and $\RED(\SN)$ trivially hold.
\item
$\SAT_{\beta}(\SN)$.
Suppose that
	$M\isub{N}{x} M_1 \ldots M_n \in \SN$ and $M_1, \ldots, M_n \in \SN.$
Since $M\isub{N}{x}$ is a subterm of a term in $\SN$, we know that $M \in \SN$. Also,
since $M\isub{N}{x} \in \SN$ and $M$ is linear,  $N \in \SN$. By assumption, $M_{1}, \ldots, M_{n} \in \SN$, so the reductions inside of these terms terminate. After finitely many reduction steps, we obtain
	$(\lambda x.M)N M_1 \ldots M_n \rightarrow \ldots \rightarrow (\lambda x.M')N' M'_1 \ldots M'_n$
where $M \rightarrow M',\; N \rightarrow N', \; M_{1} \rightarrow M'_{1}, \ldots, M_{n} \rightarrow M'_{n}.$ After contracting\\ $(\lambda x.M')N' M'_1 \ldots M'_n$ to $M'\isub{N'}{x} M'_1 \ldots M'_n$, we obtain a reduct of\\ $M\isub{N}{x} M_1 \ldots M_n \in \SN$. Hence, $(\lambda x.M)N M_1 \ldots M_n \in \SN. $
\item
$\SAT_{`m}(\SN)$. Analogous to $\SAT_{\beta}(\SN)$.
\item
$\WEAK(\SN)$.
Suppose that
$M \in \SN$ and $x \not \in Fv(M)$. Then trivially $\weak{x}{M} \in \SN$, since no new redexes are formed.
\item
$\CONT(\SN)$.
Suppose that $M \in \SN,\; y \not = z,\; y, z \in Fv(M),\; x \not \in Fv(M) \setminus \{y,z\}$.
We prove $\cont{x}{y}{z}{M} \in \SN$ by induction on the structure of $M$.
\begin{itemize}
\item $M = \lambda w.N$. Then $N \in \SN$ and $\cont{x}{y}{z}{M} = \cont{x}{y}{z}{(\lambda w.N)} \rightarrow_{\gamma_{1}} \lambda w.\cont{x}{y}{z}{N} \in \SN$, since $\cont{x}{y}{z}{N} \in \SN$ by IH.
\item $M = PQ$. Then $P, Q \in \SN$ and if $y,z \not \in Fv(Q)$, $\cont{x}{y}{z}{M} = \cont{x}{y}{z}{(PQ)} \rightarrow_{\gamma_{2}} (\cont{x}{y}{z}{P})Q \in \SN$, since by IH $\cont{x}{y}{z}{P} \in \SN$.\\ The case of $\rightarrow_{\gamma_{3}}$ reduction is analogous.
\item $M = \weak{w}{N}$. Then $\cont{x}{y}{z}{M} = \cont{x}{y}{z}{(\weak{w}{N})} \rightarrow_{\gamma\omega_{1}} \weak{w}{(\cont{x}{y}{z}{N})}$. By IH  $\cont{x}{y}{z}{N} \in \SN$ and $\weak{w}(\cont{x}{y}{z}{N})$ does not introduce any new redexes.
\item $M = \weak{y}{N}$. Then $\cont{x}{y}{z}{M} = \cont{x}{y}{z}{(\weak{y}{N})} \rightarrow_{\gamma\omega_{2}} N\isub{x}{z} \in \SN$, since $N \in \SN$ by IH.
\end{itemize}
\end{itemize}

%\item[(ii)]
(ii)
  \begin{itemize}
  \item $\vM \fsto \vN \subseteq \SN$.  Suppose that $M \in \vM \fsto
    \vN$. %We want to prove that $M \in \SN$.
    Then, for all $N \in \vM,\; MN \in \vN$. Since $\vM$ is $\circledR$-saturated,
    $\VAR(\vM)$ holds so $x \in \vM$ and $Mx \in \vN \subseteq \SN.$ From here we can
    deduce that $M \in \SN$.
  \item $\VAR(\vM \fsto \vN)$.  Suppose that $x \in {\tt var}$, and $M_1, \ldots, M_n \in \SN,
    n \geq 0$, such that $x \cap Fv(M_1) \cap \ldots \cap Fv(M_n) = \emptyset$. We need to show
    that $x M_1 \ldots M_n \in \vM \fsto \vN,$ i.e.\ $\forall N \in \vM$, $x M_1 \ldots
    M_nN \in \vN$. This holds since by IH $\vM \subseteq \SN$ and $\vN$ is
    $\circledR$-saturated, i.e.\ $\VAR(\vN)$ holds.
  \item $\RED(\vM \fsto \vN)$. Let $M `: \vM \fsto \vN$ and $M'$ be such that $M "->" M'$ and
    let $N`:\vM$. We know that $M N`:\vN$ and $M N "->" M' N$.  By IH, $M' N `: \vN$ hence
    $M'`: \vM \fsto \vN$.
  \item $\SAT_{\beta}(\vM \fsto \vN)$.  Suppose that $M\isub{N}{x} M_1 \ldots M_n \in \vM
    \fsto \vN$ and $M_1, \ldots,$ $M_n \in \SN.$ This means that for all $P \in \vM$,
    $M\isub{N}{x} M_1 \ldots M_nP \in \vN.$ But $\vN$ is $\circledR$-saturated, so
    $\SAT_{\beta}(\vN)$ holds and we have that for all $P \in \vN$, $(\lambda x.M)N M_1
    \ldots M_nP \in \vN.$ This means that $(\lambda x.M)N M_1 \ldots M_n \in \vM \fsto
    \vN. $
  \item $\SAT_{`m}(\vM \fsto \vN)$. Analogous to $\SAT_{\beta}(\vM \fsto \vN)$.
  \item $\WEAK(\vM \fsto \vN)$.  Suppose that $M \in \vM \fsto \vN$ and $x \not \in
    Fv(M)$. This means that for all $N \in \vM, MN \in \vN$. But $\vN$ is
    $\circledR$-saturated, i.e.\ $\WEAK(\vN)$ holds, hence $\weak{x}{(MN)} \in \vN$. Also
    $\SAT_{\omega_{2}}(\vN)$ holds so we obtain for all $N \in \vM, (\weak{x}{M})N \in
    \vN$, i.e\ $\weak{x}{M} \in \vM \fsto \vN$.
  \item $\CONT(\vM \fsto \vN)$.  Let $M \in \vM \fsto \vN$. We want to prove that
      ${\cont{x}{y}{z} {M}} \in \vM \fsto \vN$ for $y \not = z,\; y, z \in
      Fv(M)$ and ${x \not \in Fv(M)}$. Let $P$ be any term in $\vM$. We have to prove that
      $({\cont{x}{y}{z} {M}}) \, P \in \vN$.  Since $M \in \vM \fsto \vN$, we know that
      $M\,P\in\vN$. By IH ${\cont{x}{y}{z}{(M\,P)} \in \vN}$. By reduction
      $\gamma_{2}$ and hence by $\RED(\vN)$ we have $({\cont{x}{y}{z} {M}}) \, P \in \vN$.
    Therefore ${\cont{x}{y}{z} {M}} \in \vM \fsto \vN$.
  \end{itemize}

%\item[(iii)]
(iii)
\begin{itemize}
\item
$\vM \cap \vN \subseteq \SN$ is straightforward, since $M,N \subseteq \SN$ by IH.
\item
$\VAR(\vM \cap \vN)$. Since $\VAR(\vM)$ and $\VAR(\vN)$ hold, we have that
$\forall M_{1}, \ldots,$ $M_{n}$ $\in \SN$, $n \geq 0$:
$xM_1 \ldots M_{n} \in \vM$ and $xM_1 \ldots M_{n} \in \vN$. We deduce that
$\forall M_{1}, \ldots, M_{n} \in \SN$, $n \geq 0$:
$xM_1 \ldots M_{n} \in \vM \cap \vN$, i.e.\ $\VAR(\vM \cap \vN)$ holds.
\item $\RED(\vM \cap \vN)$ is straightforward.
\item $\SAT_{\beta}(\vM \cap \vN)$ and $\SAT_{`m}(\vM \cap \vN)$ are straightforward.
%\item
%$\SAT_{`m}(\vM \cap \vN)$.
\item
$\WEAK(\vM \cap \vN)$.
Let $M \in \vM \cap \vN$ and $x \not \in Fv(M)$. Then $M \in \vM$ and $M \in \vN$. Since both $\vM$ and $\vN$ are $\circledR$-saturated $\WEAK(\vM)$ and $\WEAK(\vN)$ hold, hence by IH $\weak{x}{M} \in \vM$ and $\weak{x}{M} \in \vN$, i.e.\ $\weak{x}{M} \in \vM \cap \vN$.
\item
$\CONT(\vM \cap \vN)$.
Suppose that $M \in \vM \cap \vN,\; y \not = z,\; y, z \in Fv(M),\; x \not \in Fv(M) \setminus \{y,z\}$. Since both $\vM$ and $\vN$ are $\circledR$-saturated $\CONT(\vM)$ and $\CONT(\vN)$ hold, hence by IH $\cont{x}{y}{z}{M} \in \vM$ and $\cont{x}{y}{z}{M} \in \vN$, i.e.\ $\cont{x}{y}{z}{M} \in \vM \cap \vN$.
\end{itemize}

%\item[(iv)]
(iv)
By induction on the construction of $\varphi \in \mathsf{Types}$.
\begin{itemize}
    \item If $\varphi \equiv p$, $p$ a type atom, then $\ti{\varphi} = \SN$, so it is $\circledR$-saturated using (i).
    \item If $\varphi \equiv \tA \to \tS$, then $\ti{\varphi} = \ti{\tA} \fsto \ti{\tS}$. Since $\ti{\tA}$ and $\ti{\tS}$ are $\circledR$-saturated by IH,  we can use (ii).
    \item If $\varphi \equiv \cap_{i}^{n} \tS_{i}$, then $\ti{\varphi} =\ti{\cap_{i}^{n} \tS_{i}}  = \cap_{i}^{n} \ti{\tS_{i}}$ and for all $i=1, \ldots, n, \ti{\tS_{i}}$ are $\circledR$-saturated by IH, so we can use (iii). If $\varphi \equiv \cap_{i}^{0} \tS_{i}$, then $\ti{\varphi} = \SN$
\end{itemize}

%\end{itemize}
\end{proof}

%%---

We further define a {\em valuation of terms\/} $\tei{-}_{\rho}: \LR \to \LR$ and the {\em semantic satisfiability relation\/}
$\models$ connecting the type interpretation with the term valuation.

\begin{definition}  \label{def:val}
Let $\rho : {\tt var} \to \LR$ be a valuation of term variables in
$\LR$. For ${M \in \LR}$, with $Fv(M) = \{x_1, \ldots, x_n\} $ the
\emph{term valuation} $\tei{-}_\rho : \LR \to \LR$ is defined as follows:
$$\tei{M}_{\rho} = M[\isubs{\rho(x_{1})}{x_{1}}, \ldots, \isubs{\rho(x_{n})}{x_{n}}].$$
\noindent providing that $x\not = y \; \Rightarrow \; Fv(\rho(x)) \cap Fv(\rho(y)) = \emptyset$.
\end{definition}

\noindent \textit{Notation:} $`r(N/x)$ is the valuation defined as:
$\displaystyle{
  `r(N/x)(y) = \left\{
    \begin{array}{l}
      `r(y) \qquad \textrm{if~} x \neq y\\
      N \qquad \mbox{otherwise}
     \end{array}
   \right.}$

%\begin{displaymath}
%  `r(N/x)(y) = \left\{
%    \begin{array}{l}
%      `r(y) \qquad \textrm{if~} x \neq y\\
%      N \qquad \mbox{otherwise}
%     \end{array}
%   \right.
%\end{displaymath}

% \begin{itemize}
%    \item[(i)]
%    $\tei{x}_\rho = \rho (x)$;
%    \item[(ii)]
%    $\tei{MN}_{\rho} \equiv
%    \left\{
%    \begin{array}{ll}
%    \tei{M}_{\rho} \tei{N}_\rho, &   \mbox{if} \; \;   Fv(\tei{M}_{\rho} ) \cap  Fv(\tei{N}_{\rho} )  = \emptyset \\
%    \cont{Y}{Y'}{Y''}(\tei{M}_{\rho(\isubs{Y'}{Y})}  \tei{N}_{\rho (\isubs{Y''}{Y})} ),
%        &  \mbox{if} \; \;  Fv(\tei{M}_{\rho} ) \cap  Fv(\tei{N}_{\rho} ) = \{y_1, \ldots ,y_k\} \\
%    \end{array}
%    \right.$
%    where $ Y = \{y_1, \ldots ,y_k\}$, $ Y '= \{y_1', \ldots ,y_k'\}$ and $ Y ''= \{y_1'', \ldots ,y_k''\}$ and \\ $\rho(\isubs{Y'}{Y})$ denotes $\rho(\isubs{y_1'}{y_1}, \ldots, \isubs{y_k'}{y_k})$ (similarly for $\rho(\isubs{Y''}{Y})$).
%    \item[(iii)]
%    $\tei{\lambda x. M}_{\rho} \equiv \lambda x. \tei{M}_{\rho(\isubs{x}{x})}.$
%    \item[(iv)]
%    $\tei{\weak{x}{M}}_{\rho} \equiv \weak{Fv(\rho(x))}{\tei{M}}_{\rho}.$
%    \item[(v)]
%    $\tei{\cont{z}{x}{y}{M}}_{\rho} \equiv
%    \cont{Fv(\rho (z))}{Fv(N_{1})}{Fv(N_{2})}{\tei{M}_{\rho(\isubs{N_{1}}{x},\isubs{N_{2}}{y})}}$\\
%    where $N_{1}$ and $N_{2}$ are obtained from $\rho(z)$ by renaming its free variables.
%\end{itemize}

\begin{lemma} \label{lemma:val}
\rule{0in}{0in}
 \begin{itemize}
%    \item[(i)]
%    $\tei{x}_\rho = \rho (x)$;
    \item[(i)]
    $\tei{MN}_{\rho} = \tei{M}_{\rho}\tei{N}_{\rho}$
    \item[(ii)]
%    $\tei{\lambda x. M}_{\rho} \equiv \lambda x. \tei{M}_{\rho(\isubs{x}{x})}.$
$\tei{\lambda x. M}_{\rho} N   \to \tei{ M}_{\rho(\isubs{N}{x})} $.
    \item[(iii)]
    $\tei{\weak{x}{M}}_{\rho} = \weak{Fv(\rho(x))}{\tei{M}}_{\rho}.$
    \item[(iv)]
    $\tei{\cont{z}{x}{y}{M}}_{\rho} =
    \cont{Fv(N)}{Fv(N_{1})}{Fv(N_{2})}{\tei{M}_{\rho(\isubs{N_{1}}{x},\isubs{N_{2}}{y})}}$\\
    where $N=\rho (z)$, $N_{1}$, $N_{2}$ are obtained from $N$ by renaming its free variables.
\end{itemize}
\end{lemma}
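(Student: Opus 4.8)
The plan is to establish each of the four identities by a direct case analysis on the head constructor of the argument term, rather than by a structural induction: in each case I unfold the definition of the term valuation $\tei{-}_\rho$ (Definition~\ref{def:val}) as a parallel substitution of $\rho(w)$ for each $w \in Fv(M)$, and then compute using the defining clauses of the implicit substitution in Figure~\ref{fig:sub-rcl} together with the composition and commutation of substitutions proved in Lemma~\ref{lemma:subs-comp}. Throughout I rely on Barendregt's convention and the well-formedness constraints of Figure~\ref{fig:wf} to keep all the relevant variable sets disjoint.

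For (i), the formation rule for application gives $Fv(M) \cap Fv(N) = \emptyset$, so in the iterated substitution defining $\tei{MN}_\rho$ each single substitution $\isub{\rho(w)}{w}$ with $w \in Fv(M)$ descends into the left factor (since $w \notin Fv(N)$) by the clause $(MP)\isub{Q}{x} \triangleq M\isub{Q}{x}P$, while each $w \in Fv(N)$ descends into the right factor by the symmetric clause. Collecting these yields $\tei{M}_\rho \tei{N}_\rho$. For (ii), note that $x \notin Fv(\lambda x.M)$, so $\rho$ does not touch $x$; pushing each substitution under the binder via $(\lambda y.P)\isub{Q}{x} \triangleq \lambda y.P\isub{Q}{x}$ gives $\tei{\lambda x.M}_\rho = \lambda x.\,P$, where $P$ is $M$ with $\rho$ applied only to $Fv(M)\setminus\{x\}$. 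A single $(\beta)$ step produces $P\isub{N}{x}$, and since $x$ lies outside the domain of the substitution producing $P$, Lemma~\ref{lemma:subs-comp} merges the two into the single parallel substitution defining $\tei{M}_{\rho(\isubs{N}{x})}$.

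For (iii) and (iv) the point is to isolate the action of $\rho$ on the newly introduced free variable. In (iii), applying $\rho$ at $x$ uses $(\weak{x}{P})\isub{Q}{x} \triangleq \weak{Fv(Q)}{P}$ to replace the prefix $\weak{x}{\cdot}$ by $\weak{Fv(\rho(x))}{\cdot}$, while the remaining substitutions pass through the weakening by $(\weak{y}{P})\isub{Q}{x} \triangleq \weak{y}{P\isub{Q}{x}}$ and act on $M$, producing $\weak{Fv(\rho(x))}{\tei{M}_\rho}$. Case (iv) is the genuine obstacle. Applying $\rho$ at $z$ invokes the contraction clause, which duplicates $N = \rho(z)$ into two freshly renamed copies $N_1, N_2$ and substitutes them for $x$ and $y$, yielding the prefix $\cont{Fv(N)}{Fv(N_1)}{Fv(N_2)}{\cdot}$ around $M[N_1/x,N_2/y]$. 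One must then commute the remaining substitutions of $\rho$ (those for $Fv(M)\setminus\{x,y\}$) past this new contraction binder by the clause $(\cont{y}{y_1}{y_2}{P})\isub{Q}{x} \triangleq \cont{y}{y_1}{y_2}{P\isub{Q}{x}}$, and verify that the net effect on $M$ is exactly the parallel substitution $\rho(\isubs{N_1}{x},\isubs{N_2}{y})$.

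The delicate bookkeeping in (iv) mirrors the last case of Lemma~\ref{lemma:add-red}: the freshly chosen free variables of $N_1$ and $N_2$ are bound by the contraction prefix and so are untouched by the rest of $\rho$, whereas the variables of $Fv(M)\setminus\{x,y\}$ must be carried unchanged into the body. Justifying that the order in which these substitutions are performed does not matter, and that they combine into the single parallel substitution $\rho(\isubs{N_1}{x},\isubs{N_2}{y})$, is precisely what Lemma~\ref{lemma:subs-comp} provides. This gives $\tei{M}_{\rho(\isubs{N_1}{x},\isubs{N_2}{y})}$ as the body, hence the claimed equation.
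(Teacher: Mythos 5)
Your proposal is correct and follows essentially the same route as the paper: each item is handled by unfolding the term valuation as a parallel substitution and computing directly with the clauses of Figure~\ref{fig:sub-rcl} (the paper performs these computations explicitly for (ii)--(iv) and declares (i) straightforward). Your explicit appeal to Lemma~\ref{lemma:subs-comp} for merging and commuting the substitutions is a reasonable way to justify steps the paper leaves implicit, but it does not change the argument.
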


\begin{proof}
\rule{0in}{0in}
 \begin{itemize}
    \item[(i)] Straightforward from the definition of substitution given in Figure~\ref{fig:sub-rcl}.
    \item[(ii)]
	If  $Fv(\lambda x.M) = \{x_{1}, \ldots, x_{n}\}$, then\\
	$\tei{\lambda x.M}_{\rho} N =
	  (\lambda x.M)[\isubs{\rho(x_{1})}{x_{1}}, \ldots, \isubs{\rho(x_{n})}{x_{n}}] N   \to \\
        (M[\isubs{\rho(x_{1})}{x_{1}}, \ldots, \isubs{\rho(x_{n})}{x_{n}}])[\isubs{N}{x}]
    =
	M[ \isubs{\rho(x_{1})}{x_{1}}, \ldots, \isubs{\rho(x_{n})}{x_{n}}, \isubs{N}{x}] = \\
	 \tei{ M}_{\rho(\isubs{N}{x})}$,
%	The last equivalence holds, since $x \not \in FV(\rho(x_{i}))$.
    \item[(iii)]
	If  $Fv(M) = \{x_{1}, \ldots, x_{n}\}$, then\\
	$\tei{\weak{x}{M}}_{\rho} =
	  (\weak{x}{M})[\isubs{\rho(x)}{x}, \isubs{\rho(x_{1})}{x_{1}}, \ldots, \isubs{\rho(x_{n})}{x_{n}}] =\\
	  \weak{Fv(\rho(x))}{M[\isubs{\rho(x_{1})}{x_{1}}, \ldots, \isubs{\rho(x_{n})}{x_{n}}]} =
	  \weak{Fv(\rho(x))}{\tei{M}_{\rho}}.$
    \item[(iv)]
	If  $Fv(M) = \{x_{1}, \ldots, x_{n}\}$, then\\
	$\tei{\cont{z}{x}{y}{M}}_{\rho} =
	(\cont{z}{x}{y}{M})[\isubs{N}{z}, \isubs{\rho(x_{1})}{x_{1}}, \ldots, \isubs{\rho(x_{n})}{x_{n}}] =\\
	\cont{Fv(N)}{Fv(N_{1})}{Fv(N_{2})}{M}[\isubs{N_{1}}{x}, \isubs{N_{2}}{y}, \isubs{\rho(x_{1})}{x_{1}}, \ldots, 			 \isubs{\rho(x_{n})}{x_{n}}] =\\
	= \cont{Fv(N)}{Fv(N_{1})}{Fv(N_{2})}{\tei{M}_{\rho(\isubs{N_1}{x},\isubs{N_2}{y})}}.$
\end{itemize}
\end{proof}

\begin{definition} \label{def:model}
\rule{0in}{0in}
\begin{itemize}
    \item [(i)] $\rho \models M : \tS \quad \iff\ \quad \tei{M}_\rho \in \ti{\tS}$;
    \item [(ii)] $\rho \models \Gamma \quad \iff\ \quad (\forall (x:\tA) \in \Gamma) \quad \rho(x)\in \ti{\tA}$;
    \item [(iii)] $\Gamma \models M : \tS \quad \iff\ \quad (\forall \rho, \rho \models \Gamma \Rightarrow
    		\rho \models M : \tS)$.
  \end{itemize}
\end{definition}

\begin{lemma}\label{lem:valcup}
 % Let $`r$ be a valuation and $`G$ and $`D$ two environments. If there exists a term
  %$M$ and two types $`a$ and $`b$ such that
  Let $`G "|=" M:\tS$ and $`D "|="
  M:\tT$, then $$`r "|=" `G \sqcap `D \mbox{ if and only if } `r "|=" `G \mbox{ and } `r "|=" `D.$$

%  Given an evaluation $`r$ and two environments $`G$ and $`D$.  Assume there exist a term
%  $M$ and two types $`a$ and $`b$ such that $`G "|=" M:`a$ and $`D "|="
%  M:`b$.  Then $`r "|=" `G \sqcup `D$ if and only if $`r "|=" `G$ and $`r "|=" `D$.
\end{lemma}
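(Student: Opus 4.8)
The plan is to unfold both sides of the biconditional into pointwise membership conditions ranging over the common domain, and to reduce the whole equivalence to the single algebraic fact that the type interpretation commutes with intersection. Before doing so, I would first settle the domain bookkeeping: the bases intersection $\Gamma \sqcap \Delta$ is defined (Definition~\ref{def:typass-basis}(iii)) only when $Dom(\Gamma) = Dom(\Delta)$. The role of the two hypotheses $\Gamma \models M:\tS$ and $\Delta \models M:\tT$ is precisely to guarantee this: since both $\Gamma$ and $\Delta$ are bases for the \emph{same} term $M$, their common domain is $Fv(M)$ (cf.\ the Domain Correspondence Lemma~\ref{lem:dom-corr} for the typing judgments underlying these semantic ones), so $Dom(\Gamma) = Fv(M) = Dom(\Delta)$. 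I would fix this common domain and write, for each variable $x$ in it, $x:\tA \in \Gamma$ and $x:\tB \in \Delta$, so that the matching entry of $\Gamma \sqcap \Delta$ is exactly $x:\tA \cap \tB$.

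The key step is then the identity $\ti{\tA \cap \tB} = \ti{\tA} \cap \ti{\tB}$, which is immediate from clause $(I3)$ of Definition~\ref{def:typeInt}. Using it, the membership $\rho(x) \in \ti{\tA \cap \tB}$ is equivalent to the conjunction $\rho(x) \in \ti{\tA}$ and $\rho(x) \in \ti{\tB}$, for every $x$ in the common domain.

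It remains only to chain the definition of $\models$ on bases (Definition~\ref{def:model}(ii)). Unfolding $\rho \models \Gamma \sqcap \Delta$ says: for every $x$ in the common domain, $\rho(x) \in \ti{\tA \cap \tB}$; by the identity above this is equivalent to: for every such $x$, both $\rho(x) \in \ti{\tA}$ and $\rho(x) \in \ti{\tB}$. Because the quantification ranges over exactly $Dom(\Gamma) = Dom(\Delta)$, this splits into the two statements $(\forall x)\,\rho(x) \in \ti{\tA}$ and $(\forall x)\,\rho(x) \in \ti{\tB}$, that is, $\rho \models \Gamma$ and $\rho \models \Delta$. Since each equivalence in the chain is reversible, both directions of the biconditional are read off at once.

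The only genuinely delicate point is the domain-matching established at the start: if one dropped it and allowed $Dom(\Gamma) \neq Dom(\Delta)$, then $\Gamma \sqcap \Delta$ would be empty and the left-hand side vacuously true while the right-hand side need not be, so the statement would be false. Thus the real mathematical content of the hypotheses is to pin both domains to $Fv(M)$; once that is in place, and once the interpretation-of-intersection identity $(I3)$ is invoked, the rest is a routine unfolding of the definitions of $\models$.
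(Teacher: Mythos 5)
Your proof is correct and follows essentially the same route as the paper, which simply observes that the statement is a straightforward consequence of the definition of bases intersection $\sqcap$; you merely spell out the unfolding through Definition~\ref{def:model}(ii) and clause $(I3)$ of Definition~\ref{def:typeInt}. The only caveat is that the domain-matching you derive from the hypotheses strictly speaking follows from the Domain Correspondence Lemma only for the syntactic judgments $\vdash$, not directly for $\models$, but this does not affect the argument in the contexts where the lemma is applied.
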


\begin{proof}
  The proof is a straightforward consequence of the Definition~\ref{def:typass-basis} of bases intersection $\sqcap$.
\end{proof}

\begin{proposition} [Soundness of $\rcl \cap$] \label{prop:sound}
If $\Gamma \vdash M:\tS$, then $\Gamma \models M:\tS$.
\end{proposition}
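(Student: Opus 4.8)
The plan is to prove the statement by induction on the derivation of $\Gamma \vdash M:\tS$; since the system $\rcl\cap$ is syntax-directed (Lemma~\ref{prop:intGL}), this coincides with an induction on the structure of $M$, with one case per typing rule. Fixing a valuation $\rho$ with $\rho \models \Gamma$, the goal in each case, after unfolding Definition~\ref{def:model}, is $\tei M_\rho \in \ti\tS$. The two tools I expect to use throughout are the commutation of the valuation with the term constructors (Lemma~\ref{lemma:val}) and the fact that every interpretation $\ti\varphi$ is $\circledR$-saturated (Proposition~\ref{prop:saturSets}(iv)), so that it satisfies the variable, reduction, expansion, weakening and contraction properties of Definition~\ref{def:var+sat}.

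The axiom case is immediate: $\tei x_\rho = \rho(x) \in \ti\tS$ by hypothesis. For $(\to_I)$ I would take an arbitrary $N \in \ti\tA$ and compute, via Lemma~\ref{lemma:val}(ii), that $\tei{\lambda x.M}_\rho N \to \tei M_{\rho(N/x)}$; since $\rho(N/x) \models \Gamma, x:\tA$, the induction hypothesis gives $\tei M_{\rho(N/x)} \in \ti\tS$, and because $N \in \ti\tA \subseteq \SN$ the expansion property $\SAT_\beta(\ti\tS)$ lifts this to the redex, whence $\tei{\lambda x.M}_\rho \in \ti\tA \fsto \ti\tS = \ti{\tA \to \tS}$. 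The cases $(Weak)$ and $(Cont)$ follow the same pattern using Lemma~\ref{lemma:val}(iii)--(iv): after rewriting $\tei{\weak x M}_\rho$ and $\tei{\cont z x y M}_\rho$, the induction hypothesis places the inner valuation in $\ti\tS$, and the properties $\WEAK(\ti\tS)$ and $\CONT(\ti\tS)$, applied once per variable in the weakening and contraction lists, close the case. For $(Cont)$ I also need that $\ti\tA$ is stable under renaming of free variables, so that the renamed copies $N_1,N_2$ of $\rho(z)$ still lie in $\ti\tA,\ti\tB$; this is a routine invariance property of the interpretations.

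The application rule $(\to_E)$ is the heart of the proof. Writing $\tei{MN}_\rho = \tei M_\rho\,\tei N_\rho$ (Lemma~\ref{lemma:val}(i)), the induction hypothesis on the left premise gives $\tei M_\rho \in \ti{\cap_i^n\tT_i} \fsto \ti\tS$, so it suffices to show $\tei N_\rho \in \ti{\cap_i^n\tT_i}$. When $n \geq 1$ this set is $\cap_i^n \ti{\tT_i}$; here I would use Lemma~\ref{lem:valcup} to deduce $\rho \models \Delta_i$ for each $i$ from $\rho \models \Delta_1 \sqcap \dots \sqcap \Delta_n$ (the $\dztop$ factor being neutral), and then the induction hypothesis on each premise $\Delta_i \vdash N:\tT_i$ yields $\tei N_\rho \in \ti{\tT_i}$ for all $i$, hence $\tei N_\rho \in \cap_i^n\ti{\tT_i}$.

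The case $n = 0$ is where I expect the real difficulty. Here $\ti{\cap_i^0\tT_i} = \SN$, so the goal reduces to $\tei N_\rho \in \SN$, and the only information carried by the conclusion about the free variables of $N$ is $\rho \models \dztop$, i.e.\ $\rho(x) \in \ti\top = \SN$. The auxiliary premise $\Delta_0 \vdash N:\tT_0$ is precisely what must be exploited to control the \emph{erased} argument: by the induction hypothesis $\Delta_0 \models N:\tT_0$, and evaluating at the variable valuation (legitimate since every saturated set contains the variables, by $\VAR$) gives $N \in \ti{\tT_0} \subseteq \SN$. The delicate step, and the one I would scrutinise most, is passing from $N \in \SN$ to $\tei N_\rho \in \SN$ for the given $\rho$: substituting strongly normalising terms for free variables of an $\SN$ term is \emph{not} innocuous in the presence of erasure, and it is exactly the linear discipline on $\rcl$-terms, together with the role of $\Delta_0$, that must be invoked to rule out the silent discarding of a non-normalising argument. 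I would therefore isolate this as a separate lemma asserting that $\tei N_\rho \in \SN$ whenever $N$ is typeable and $\rho$ maps free variables into $\SN$, and treat that lemma, rather than the surrounding induction, as the genuine obstacle.
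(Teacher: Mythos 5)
Your overall plan coincides with the paper's proof: induction on the derivation, with the axiom, $(\to_I)$, $(Weak)$ and $(Cont)$ cases discharged exactly as you describe via Lemma~\ref{lemma:val} and the saturation properties of Proposition~\ref{prop:saturSets}, and the $n\geq 1$ subcase of $(\to_E)$ handled via Lemma~\ref{lem:valcup}. You have also put your finger on the right pressure point: the $n=0$ subcase of $(\to_E)$, where the conclusion's environment records only $x:\top$ for $x\in Fv(N)$ and one must nevertheless obtain $\tei{N}_\rho\in\SN$.

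However, the lemma you propose to close that case --- that $\tei{N}_\rho\in\SN$ whenever $N$ is typeable and $\rho$ sends the free variables of $N$ into $\SN$ --- is false, and linearity does not rescue it. Take $N=x_1x_2$ (typeable and in normal form) and let $\rho$ send $x_1$ and $x_2$ to two renamed copies of $\delta=\lambda w.\cont{w}{w_1}{w_2}{(w_1w_2)}$, a normal form and hence an element of $\SN=\ti{\top}$. Then $\tei{N}_\rho=\delta\delta'$, which $\beta$-reduces to another copy of itself and is not strongly normalising: the failure comes not from duplication of a single variable inside $N$ but from the interaction of the images of two \emph{distinct} variables. What the induction actually needs at this point is $\rho\models\Delta_0$, i.e.\ that $\rho$ respects the types assigned to $Fv(N)$ in the auxiliary premise $\Delta_0\vdash N:\tT_0$; but the conclusion of $(\to_E)$ forgets $\Delta_0$ down to $\dztop$, so the hypothesis $\rho\models\Gamma,\dztop$ does not supply this. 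Be aware that the paper's own proof is no more detailed here: it passes from $\rho\models\dztop$ to $\tei{N}_\rho\in\ti{\top}$ in a single appeal to Definition~\ref{def:model}, which is precisely the inference your lemma would have to justify. So you have correctly isolated the genuine obstacle, but your proposed repair does not close it, and any repair must carry more than $\top$-information about the erased argument's free variables through the conclusion of $(\to_E)$.
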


\begin{proof}
By induction on the derivation of $\Gamma \vdash M:\tS$.
%The cases $(Ax)$ and $(\to_{I})$ are analogous to the corresponding rules in ordinary $\lambda$ calculus. We prove the statement for the remaining inference rules.

\begin{itemize}

\item
If the last rule applied is $(Ax)$, i.e.\
	$x:\tS \vdash x:\tS$
the proof is trivial.
%Suppose that $\rho \models x:\cap_{i}^{n} \tS_{i}$.
%From here we deduce that $\rho(x) \in \ti{\cap_{i}^{n} \tS_{i}} = \cap_{i}^{n} \ti{\tS_{i}}$
%which means that for all $i \in \{1, \ldots, n\}, \rho(x) \in \ti{\tS_{i}}$.
%

\item
The last rule applied is $(\to_{I})$, i.e.,\
    $$\Gamma, x:\tA \vdash M:\tS \; \Rightarrow \;
      \Gamma\vdash \lambda x.M:\tA \to \tS.$$
By the IH $\Gamma, x:\tA \models M: \tS$.
Suppose that $\rho \models \Gamma$ and we want to show that $\rho \models \lambda x.M: \tA \to \tS$.
We have to show that
$$\tei{\lambda x.M}_{\rho} \in \ti{\tA \to \tS} = \ti{\tA} \fsto \ti{\tS}\;\; \mbox{ i.e.}\;\;
\forall N \in \ti{\tA}. \; \tei{\lambda x.M}_{\rho}N \in \ti{\tS}.$$
Suppose that $N \in \ti{\tA}$.
We have that $\rho(N/x) \models \Gamma, x:\alpha$ since $\rho \models \Gamma$, $x \not\in \Gamma$ and $\rho(N/x)(x)=N \in \ti{\tA}$. By IH $\rho(N/x) \models M:\sigma$, hence we can conclude that $\tei{M}_{\rho(N/x)} \in \ti{\tS}$. Using Lemma~\ref{lemma:val}(ii) we get $\tei{\lambda x.M}_{\rho}N \to$ $\tei{M}_{\rho(N/x)}$. Since $\tei{M}_{\rho(N/x)} \in \ti{\tS}$ and $\ti{\tS}$ is $\circledR$-saturated,
we obtain $\tei{\lambda x.M}_{\rho}N \in \ti{\tS}$.

\item
% \pier{Pierre's old version:}
% \pierre{
% The last rule applied is $(\to_{E})$, i.e.\
% $$\Gamma \vdash M:\cap_{i}^{n} \tA_i \to \tS,\; \Delta_1 \vdash N:\tA_1\; \ldots \; \Delta_n \vdash N:\tA_n \;			\Rightarrow\; \Gamma, \Delta_1 \sqcap \ldots \sqcap \Delta_{n} \vdash MN:\tS.$$
% Let $`r$ be any valuation.  \\
% Assuming that
% $`G "|-" M: \cap_{i}^{n} `a_i \to `s, `D_1 "|-"  N:`a_1,\ldots, `D_n"|-" N:`a_n$, we have to prove that if
% $`r "|=" `G$ and
% $`r "|=" `D_1  \sqcap ... \sqcap `D_n$, then $`r "|=" M\,N:`s$.

% By IH, $`G "|=" M :\cap_i^n `a_i \to `s$ and $\bigwedge_{i=1}^n (`D_i "|=" N:`a_i)$.

% From $`r "|=" `G$ we deduce by Definition~\ref{def:model}~(iii) $`r "|=" M: \cap_{i}^{n}
% `a_i \to `s$ and by Definition~\ref{def:model}~(i) $\tei{M}_{\rho} \in \ti{\cap_{i}^{n}
%   \tA_{i} \to \tS}$.  By Definition~\ref{def:val} $\tei{M}_{`r} \in \bigcap_{i=1}^n
% \tei{`a_i} \fsto \tei{`s}$.

%  By Lemma~\ref{lem:valcup}, $`r "|=" `D_1 \sqcap ... \sqcap `D_n$ implies
% $\bigwedge_{i=1}^n (`r "|=" `D_i)$, hence by Definition~\ref{def:model}~(iii) and $\bigwedge_{i=1}^n (`D_i "|="
%  N:`a_i)$ we get $\bigwedge_{i=1}^n (`r "|=" N:`a_i)$. Hence $\tei{N}_{`r} \in
%  \bigcap_{i=1}^n \tei{`a_i}$.  By definition of $\fsto\!\!\!\!$, $\tei{M\,N}_{`r} = \tei{M}_{`r}\,\tei{N}_{`r} \in
%   \tei{`s}$ and by Definition~\ref{def:model}~(i) ${`r "|=" M\,N:`s}$.
% }

The last rule applied is $(\to_{E})$, i.e.\
$$\Gamma \vdash M:\cap_{i}^{n} \tT_i \to \tS,\; \Delta_0 \vdash N:\tT_0\; \ldots \; \Delta_n \vdash N:\tT_n \;			\Rightarrow\; \Gamma, \dztop \sqcap \Delta_1 \sqcap \ldots \sqcap \Delta_{n} \vdash MN:\tS.$$
Let $`r$ be any valuation.  \\
Assuming that
$`G "|-" M: \cap_{i}^{n} \tT_i \to `s, `D_1 "|-"  N:\tT_1,\ldots, `D_n"|-" N:\tT_n$, we have to prove that if
$`r "|=" `G, \dztop \sqcap `D_1  \sqcap ... \sqcap `D_n$, then $`r "|=" M\,N:`s$.

By IH, $\Gamma \models M :\cap_i^n \tT_i \to \tS$ and $\Delta_0 \models N:\tT_0, \ldots, \Delta_n \models N:\tT_n$.
Assume that $\rho \models \Gamma, \dztop \sqcap \Delta_1 \sqcap \ldots \sqcap \Delta_{n}$. This means that $\rho \models \Gamma$ and $\rho \models \dztop \sqcap \Delta_1 \sqcap \ldots \sqcap \Delta_{n}.$
From $\rho \models \Gamma$ we deduce by Definition~\ref{def:model}~(iii) $\rho \models M: \cap_{i}^{n} \tT_i \to \tS$ and by Definition~\ref{def:model}~(i) $\tei{M}_{\rho} \in \ti{\cap_{i}^{n}
  \tT_{i} \to \tS}$.  By Definition~\ref{def:val} $\tei{M}_{\rho} \in \bigcap_{i}^n
\tei{\tT} \fsto \tei{\tS}$.\\
Using Lemma~\ref{lem:valcup} $\rho \models \dztop \sqcap \Delta_1 \sqcap ... \sqcap \Delta_n$ implies
$(\rho \models \dztop) \wedge (\bigwedge_{i=1}^n\rho \models \Delta_{i})$, hence by
Definition~\ref{def:model}~(i) and (iii) we get $(\tei{N}_{\rho} \in \ti{\top}) \wedge
{\bigwedge_{i=1}^n(\tei{N}_{\rho} \in \ti{\tT_{i}})}$, i.e.\ $\tei{N}_{\rho} \in \SN \ \cap\ \cap_{i}^{n }\ti{\tT_{i}} = \cap_{i}^{n }\ti{\tT_{i}}$, since $\ti{\tau_i} \subseteq \SN$ by Proposition~\ref{prop:saturSets}(iv).
By Definition~\ref{def:fsto} of $\fsto\!\!\!\!$, $\tei{M\,N}_{\rho} = \tei{M}_{\rho}\,\tei{N}_{\rho}
\in \tei{\tS}$  and by Definition~\ref{def:model}~(i) ${`r "|=" M\,N:`s}$.

\item
The last rule applied is $(Weak)$, i.e.,\
	$$\Gamma \vdash M:\tS \; \Rightarrow \; \Gamma, x:\top \vdash \weak{x}{M}:\tS.$$
By the IH $\Gamma \models M:\tS$.
Suppose that $\rho \models \Gamma, x:\top$ $\Leftrightarrow$  $\rho \models \Gamma$ and $\rho \models x:\top$. From $\rho \models \Gamma$ we obtain $\tei{M}_{\rho} \in \ti{\tS}$. Using multiple times the weakening property $\WEAK$ and Lemma~\ref{lemma:val}(iii) we obtain $\weak{Fv(\rho(x))}{\tei{M}_{\rho}} = \tei{\weak{x}{M}}_{\rho} \in \ti{\tS}$, since $Fv(\rho(x)) \cap Fv(\tei{M}_{\rho}) = \emptyset$.

\item
The last rule applied is $(Cont)$, i.e.,\
	$$\Gamma, x:\tA, y:\tB \vdash M:\tS \; \Rightarrow \;\Gamma, z:\tA \cap \tB \vdash \cont{z}{x}{y}{M}:\tS.$$
By the IH $\Gamma, x:\tA, y:\tB \models M:\tS$.
Suppose that $\rho \models \Gamma, z:\tA \cap \tB$.%, in order to prove $\tei{\cont{z}{x}{y}{M}}_{\rho} \in \ti{\tS}$.
This means that $\rho \models \Gamma$ and $\rho \models z:\tA \cap \tB$ $\Leftrightarrow$ $\rho(z) \in \ti{\tA} \mbox{ and } \rho(z) \in \ti{\tB}$.
For the sake of simplicity let $\rho(z) \equiv N$. We define a new valuation $\rho'$ such that $\rho' = \rho (\isubs{N_{1}}{x}, \isubs{N_{2}}{y})$, where $N_{1}$ and $N_{2}$ are obtained by renaming the free variables of $N$.
Then $\rho' \models \Gamma, x:\tA, y:\tB$ since $x,y \not \in Dom(\Gamma)$, $N_{1} \in \ti{\tA}$ and $N_{2} \in \ti{\tB}$.
By the IH $\tei{M}_{\rho'} = \tei{M}_{\rho(\isubs{N_{1}}{x}, \isubs{N_{2}}{y})}  \in \ti{\tS}$.
Using the contraction property $\CONT$ we have that
$\cont{Fv(N)}{Fv(N_{1})}{Fv(N_{2})}{\tei{M}_{\rho (\isubs{N_{1}}{x}, \isubs{N_{2}}{y})}} =
\tei{\cont{z}{x}{y}{M}}_{\rho} \in \ti{\tS}$.
%By the definition of term valuation (Definition~\ref{def:val}), Lemma~\ref{lemma:subs}(i), (ii) and (iii) and the definition of substitution we obtain
%$\tei{M}_{\rho'} = \tei{M}_{\rho(\isubs{N}{x}, \isubs{N}{y})} =
%\cont{Fv(N)}{Fv(N')}{Fv(N'')}{\tei{M}_{\rho (\isubs{N'}{x}, \isubs{N''}{y})}} =
%\cont{Fv(N)}{Fv(N')}{Fv(N'')}{\tei{M}_{\rho (\isubs{x}{x}, \isubs{y}{y})}[\isubs{N'}{x}][\isubs{N''}{y}]} =
%(\cont{z}{x}{y}{\tei{M}_{\rho(\isubs{x}{x}, \isubs{y}{y})}} \isub{N}{z}  =
%(\tei{\cont{z}{x}{y} M}_{\rho(\isubs{z}{z})}) \isub{N}{z} =
%\tei{\cont{z}{x}{y}{M}}_{\rho(\isubs{N}{z})} = \tei{\cont{z}{x}{y}{M}}_{\rho}
%$, since $\rho(z)=N$.
%Hence, $\tei{\cont{z}{x}{y}{M}}_{\rho} \in \ti{\tC}$.

\end{itemize}
\end{proof}

\begin{theorem} [$\SN$ for $\rcl \cap$] \label{th:typ=>SN}
If $\Gamma \vdash M:\tS$, then $M$ is strongly normalizing, i.e. $M \in \SN$.
\end{theorem}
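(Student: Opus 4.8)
The plan is to instantiate the Soundness result (Proposition~\ref{prop:sound}) at the ``identity'' valuation and then read off membership in $\SN$ from the saturation of type interpretations. First I would fix the valuation $\rho_0$ defined by $\rho_0(x)=x$ for every term variable $x$. This is a legitimate valuation in the sense of Definition~\ref{def:val}, because distinct variables have disjoint sets of free variables, so the side condition $x\neq y \Rightarrow Fv(\rho_0(x)) \cap Fv(\rho_0(y)) = \emptyset$ holds trivially. Moreover, for this choice the term valuation acts as the identity: $\tei{M}_{\rho_0} = M[x_1/x_1,\ldots,x_n/x_n] = M$ for every $\rcl$-term $M$ with $Fv(M)=\{x_1,\ldots,x_n\}$.

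Next I would check that $\rho_0 \models \Gamma$. By Definition~\ref{def:model}(ii) this amounts to showing $\rho_0(x) = x \in \ti{\tA}$ for every $(x:\tA)\in\Gamma$. If $\tA = \top$ (i.e.\ $\cap_i^0\tS_i$), then $\ti{\tA} = \SN$ and $x\in\SN$ trivially. Otherwise $\ti{\tA}$ is $\circledR$-saturated by Proposition~\ref{prop:saturSets}(iv), so it enjoys the variable property $\VAR(\ti{\tA})$ of Definition~\ref{def:var+sat}; taking the empty list of arguments ($n=0$) in that property yields $x \in \ti{\tA}$. Hence $\rho_0 \models \Gamma$.

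Finally, assume $\Gamma \vdash M:\tS$. By Soundness (Proposition~\ref{prop:sound}) we obtain $\Gamma \models M:\tS$, that is, $\rho \models \Gamma$ implies $\rho \models M:\tS$ for every valuation $\rho$ (Definition~\ref{def:model}(iii)). Applying this with $\rho = \rho_0$ and using $\rho_0 \models \Gamma$ from the previous step gives $\rho_0 \models M:\tS$, i.e.\ $\tei{M}_{\rho_0} \in \ti{\tS}$. Since $\tei{M}_{\rho_0}=M$ and, by Proposition~\ref{prop:saturSets}(iv), $\ti{\tS}$ is $\circledR$-saturated and therefore contained in $\SN$, we conclude $M \in \SN$.

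The argument itself is short once the infrastructure is in place; the only real content is the observation that the variable property (with no arguments) forces every free variable to inhabit every type interpretation, so that the identity valuation is admissible. This is exactly the point where working with linear $\rcl$-terms pays off: there is no need to supply genuine reducibility witnesses for the free variables, and no auxiliary closure conditions are required beyond those already verified in Proposition~\ref{prop:saturSets}.
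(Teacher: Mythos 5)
Your proposal is correct and follows essentially the same route as the paper: instantiate Soundness (Proposition~\ref{prop:sound}) at the identity valuation $\rho_0(x)=x$, justify $\rho_0\models\Gamma$ via the variable property with $n=0$ applied to the $\circledR$-saturated interpretations of Proposition~\ref{prop:saturSets}(iv), and conclude from $\ti{\tS}\subseteq\SN$. The only difference is that you spell out the admissibility of $\rho_0$ and the $\top$ case explicitly, which the paper leaves implicit.
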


\begin{proof}
Suppose $\Gamma \vdash M:\tS$. By Proposition~\ref{prop:sound}\; $\Gamma \models M:\tA$. According to Definition~\ref{def:model}(iii), this means that $ (\forall \rho \models \Gamma) \quad \rho \models M : \tS$. We can choose a particular $\rho_{0}(x) = x$ for all $x \in {\tt var}$. By Proposition~\ref{prop:saturSets}(iv), $\ti{\tB}$ is $\circledR$-saturated for each type $\tB$, hence $x = \tei{x}_{\rho_0} \in \ti{\tB}$ (variable condition for $n=0$). Therefore, $\rho_{0} \models \Gamma$ and we can conclude that $\tei{M}_{\rho_{0}} \in \ti{\tS}$. On the other hand, $M = \tei{M}_{\rho_{0}}$ and $\ti{\tS} \subseteq \SN$ (Proposition~\ref{prop:saturSets}), hence $M \in \SN$.
\end{proof}

\subsection{SN $\Rightarrow$ Typeability in $\rcl \cap$}
\label{sec:SNtype}

We want to prove that if a $\rcl$-term is SN, then it is typeable
in the system $\rcl \cap$. We proceed in two steps: 1) we show
that all $\rcl$-normal forms are typeable and 2) we prove the head
subject expansion. First, let us observe the structure of the
$\rcl$-normal forms, given by the following abstract syntax:
$$
\begin{array}{rcl}
 M_{nf} & ::= & x\,|\,\lambda
x.M_{nf}\,|\,\lambda x. \weak{x}{M_{nf}}\,|\, xM_{nf}^{1} \ldots
M_{nf}^{n}\, |\, \\
& & \cont{x}{x_1}{x_2}M_{nf}N_{nf},\,\mbox{with}\;\;x_1
\in Fv(M_{nf}),x_2 \in Fv(N_{nf})\\
W_{nf} & ::= & \weak{x}{M_{nf}}\,|\,\weak{x}{W_{nf}}\\
\end{array}
$$

Notice that it is necessary to distinguish normal forms
$W_{nf}$ since the term $\lambda x. \weak{y}{M_{nf}}$ is not a
normal form, since $\lambda x. \weak{y}{M_{nf}} \to_{\omega_1}
\weak{y}{\lambda x.M_{nf}}$.

\begin{proposition}\label{prop:nf-are-typ}
$\rcl$-normal forms are typeable in the system $\rcl \cap$.
\end{proposition}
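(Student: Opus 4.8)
The plan is to prove that every $\rcl$-normal form is typeable by structural induction following the grammar of normal forms given just above the statement. The grammar has two mutually-defined syntactic categories, $M_{nf}$ and $W_{nf}$, so I would prove the claim simultaneously for both by induction on the structure of the normal form. The key observation driving the whole argument is the Domain Correspondence Lemma (Lemma~\ref{lem:dom-corr}): because we work with linear $\rcl$-terms, every free variable must appear in the typing context exactly once, and this dictates how contexts are assembled at each construction step.

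First I would treat the base and production cases for $M_{nf}$. For a variable $x$, the axiom $(Ax)$ gives $x:\tS \vdash x:\tS$ for any strict type $\tS$. For $\lambda x.M_{nf}$, the induction hypothesis gives a typing of $M_{nf}$; since $x \in Fv(M_{nf})$ (well-formedness of $\rcl$-terms), the context contains $x:\tA$, and I apply $(\to_I)$. For the related case $\lambda x.\weak{x}{M_{nf}}$, I first type $M_{nf}$ by IH, then apply $(Weak)$ to introduce $x:\top$, then $(\to_I)$ to abstract it, obtaining type $\top \to \tS$. The application spine $xM_{nf}^1\ldots M_{nf}^n$ is the case requiring the most care: by IH each $M_{nf}^i$ is typeable, and I must assign $x$ an arrow type whose argument types match. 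Here the shape of the $(\to_E)$ rule with its $n+1$ premises for the argument matters; since the arguments $M_{nf}^i$ are distinct subterms (linearity forces disjoint free-variable sets), I can build up the spine one application at a time, choosing fresh strict types for the head variable at each stage and letting the $(\to_E)$ rule merge the disjoint contexts via $\sqcap$. The contraction-headed case $\cont{x}{x_1}{x_2}M_{nf}N_{nf}$ with $x_1 \in Fv(M_{nf})$, $x_2 \in Fv(N_{nf})$ is handled by typing the body (an application $M_{nf}N_{nf}$) by IH, so $x_1$ and $x_2$ receive types $\tA$ and $\tB$ in the context, and then applying $(Cont)$ to coalesce them into $x:\tA\cap\tB$. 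The $W_{nf}$ cases $\weak{x}{M_{nf}}$ and $\weak{x}{W_{nf}}$ follow immediately from the IH by one application of $(Weak)$.

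The main obstacle I anticipate is the application-spine case $xM_{nf}^1\ldots M_{nf}^n$, specifically reconciling the simple arrow types coming out of the induction with the intersection-argument shape of the $(\to_E)$ rule. Each argument $N = M_{nf}^i$ is typed by IH with \emph{some} strict type, and the rule $(\to_E)$ wants $n+1$ typings of the same argument when the head has an $n$-fold intersection in its domain; the clean way around this is to take $n=1$ at each application node, i.e.\ assign the head variable arrow types of the form $\tT_i \to (\cdots)$ with a single conjunct in the domain, which is permitted since we are free to choose the type of a variable introduced by $(Ax)$. Linearity guarantees the contexts of the distinct arguments are disjoint, so the $\dztop \sqcap \Delta_1$ intersections in the conclusion of $(\to_E)$ reduce to simple context unions and no genuine intersection of bases is forced. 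Thus the only place a real intersection type is introduced remains the $(Cont)$ rule, exactly as the system intends.

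In writing this up I would state the induction as a simultaneous claim over $M_{nf}$ and $W_{nf}$, display the two grammars' clauses as the cases, and appeal throughout to Lemma~\ref{lem:dom-corr} to justify that the contexts produced contain precisely the free variables of the term, so that every rule application is well-formed. The argument is entirely constructive, building a typing derivation bottom-up, and no appeal to strong normalisation or to the reducibility machinery of the previous subsection is needed; this proposition is one half of the easy direction and feeds into the subsequent head-subject-expansion argument that will complete the proof that $\SN$ implies typeability.
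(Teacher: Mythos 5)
Your proposal is correct and follows exactly the route the paper takes: the paper's proof is simply ``by induction on the structure of $M_{nf}$ and $W_{nf}$'', and you carry out that same structural induction, supplying the details the paper omits. Your key observations --- that the freedom in choosing the axiom type of the head variable lets each application node be typed with $n=1$ in $(\to_E)$ so that the $\dztop\sqcap\Delta_1$ contexts collapse to plain unions, and that Lemma~\ref{lem:dom-corr} together with linearity keeps the assembled bases well-formed --- are precisely what is needed to make the induction go through.
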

\begin{proof}
By induction on the structure of $M_{nf}$ and $W_{nf}$.
\end{proof}

\begin{lemma}[Inverse substitution lemma]\label{prop:inv-subst-lemma}
Let $\;\Gamma \vdash M\isub{N}{x}:\tS\;$ and $N$ typeable. Then,
there are
$\Delta_j$ and $\tT_j, \;j=0,\ldots, n$ such
that $\Delta_j \vdash N:\tT_j,\;$ and $\Gamma', x:\cap_{i}^{n}\tT_i \vdash
M:\tS$, where $\Gamma = \Gamma',\; \dztop \sqcap \Delta_1 \sqcap \ldots \sqcap \Delta_n$.
\end{lemma}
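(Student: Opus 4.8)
The plan is to proceed by structural induction on the pre-substitution term $M$, inverting the clauses of the substitution definition in Figure~\ref{fig:sub-rcl} and reading the Substitution Lemma~\ref{prop:sub-lemma} backwards. In each case I would apply the relevant clause of the Generation Lemma~\ref{prop:intGL} to the given derivation of $\Gamma \vdash M\isub{N}{x}:\tS$, peel off the outermost constructor, invoke the induction hypothesis on the subterm into which the substitution has been pushed, and then reassemble a derivation of $M$ with the same typing rule. The two informative base cases are $M=x$ and $M=\weak{x}{M'}$. For $M=x$ we have $M\isub{N}{x}=N$, so $\Gamma \vdash N:\tS$ directly supplies the case with $n=1$, $\tT_1=\tS$, $\Delta_1=\Gamma$, $\Gamma'$ empty, and the axiom $x:\tS \vdash x:\tS$. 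For $M=\weak{x}{M'}$ the result $M\isub{N}{x}=\weak{Fv(N)}{M'}$ does not mention $x$, so this is the $n=0$ case, and it is precisely here that the hypothesis ``$N$ typeable'' is used: it supplies the witness $\Delta_0 \vdash N:\tT_0$ while the $(Weak)$ rule recovers $x:\top$.

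The remaining structural cases, namely $M=\lambda y.M'$, $M=M'P$ with $x\in Fv(M')$ together with its mirror $M=PM'$, $M=\weak{y}{M'}$ with $y\neq x$, and $M=\cont{y}{y_1}{y_2}{M'}$ with $y\neq x$, are routine. In each the Generation Lemma isolates a subderivation typing the subterm carrying the substitution, the induction hypothesis returns the types $\tT_j$ for $N$ together with $x:\cap_i^n\tT_i$ in the environment of $M'$, and re-applying the stripped-off rule rebuilds $M$ with the environment equation unchanged. In the application cases one must check that $x$ lives in exactly one of the two premises, which is guaranteed by linearity and the side conditions $x\notin Fv(P)$, so that the base intersection $\dztop \sqcap \Delta_1 \sqcap\ldots\sqcap\Delta_n$ is attached to the correct subterm.

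The hard case, mirroring the corresponding case of Lemma~\ref{prop:sub-lemma}, is $M=\cont{x}{x_1}{x_2}{M'}$, where $x$ is the contracted variable and $M\isub{N}{x}=\cont{Fv(N)}{Fv(N_1)}{Fv(N_2)}{M'[N_1/x_1,N_2/x_2]}$ with $N_1,N_2$ renamings of $N$. I would first strip the whole block of contractions on $Fv(N)$ by repeated use of Generation Lemma~\ref{prop:intGL}(iii), exposing a derivation of $M'[N_1/x_1,N_2/x_2]$ in which each contracted variable of $N$ carries the intersection of its two renamed copies, and then rewrite $M'[N_1/x_1,N_2/x_2]=(M'\isub{N_1}{x_1})\isub{N_2}{x_2}$ via Definition~\ref{def:par-sub}. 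Since $N_1,N_2$ are renamings of the typeable $N$, they are themselves typeable, so the induction hypothesis applies twice, once for $\isub{N_2}{x_2}$ and once for $\isub{N_1}{x_1}$, yielding intersection types $\cap_{i=m+1}^{n}\tT_i$ for $x_2$ and $\cap_{i=1}^{m}\tT_i$ for $x_1$, together with typings $\Delta''_k\vdash N_2:\tT_k$ and $\Delta'_j\vdash N_1:\tT_j$. A single application of the $(Cont)$ rule then fuses $x_1,x_2$ into $x:\cap_i^n\tT_i$, giving $\Gamma',x:\cap_i^n\tT_i\vdash M:\tS$.

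The step I expect to be the main obstacle is the environment bookkeeping in this last case. One must verify that undoing the renaming of $N_1,N_2$ back to $N$ turns the separate bases $\Delta'_j,\Delta''_k$ into a single family $\Delta_i\vdash N:\tT_i$, and that the contractions on $Fv(N)$ stripped off at the start reassemble, under the conventions of idempotence, commutativity and associativity of $\sqcap$ and the neutrality of $\dztop$, into exactly $\dztop \sqcap \Delta_1\sqcap\ldots\sqcap\Delta_n$ as demanded by the statement. This is the precise inverse of the merging carried out in the forward direction, and the delicate points are the $n=0$ corner, where $\dztop$ must be retained explicitly, and matching each variable of $Fv(N)$ with its two images in $Fv(N_1)$ and $Fv(N_2)$ so that the base intersection is formed correctly.
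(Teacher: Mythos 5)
Your proposal is correct and follows essentially the same route as the paper, whose entire proof of this lemma is ``by induction on the structure of $M$''; your case analysis (inverting the substitution clauses via the Generation Lemma, using typeability of $N$ only in the $M=\weak{x}{M'}$ case with $n=0$, and reversing the contraction case of the Substitution Lemma for $M=\cont{x}{x_1}{x_2}{M'}$) fills in exactly the details the paper leaves implicit.
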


\begin{proof}
By induction on the structure of $M$.
\end{proof}

\begin{proposition}[Head subject expansion]
\label{prop:sub-exp} For every $\rcl$-term $M$: if $M \to M'$, $M$
is a contracted redex and $\;\Gamma \vdash M':\tS\;$, then $\;\Gamma
\vdash M:\tS$, provided that if $M \equiv (\lambda x.N)P
\to_{\beta} N\isub{P}{x} \equiv M'$, $\;P$ is typeable.
\end{proposition}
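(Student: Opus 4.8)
The plan is to prove this statement as the converse of Subject Reduction (Proposition~\ref{prop:sr}): since $M$ is a contracted redex, the reduction happens at the head, so I would proceed by a case analysis on the reduction rule $\mu \in \circledR$ applied at the root. In each case I assume a derivation of $\Gamma \vdash M':\tS$ for the contractum, dismantle it with the Generation Lemma (Lemma~\ref{prop:intGL}) to expose the types of the relevant subterms, and then reassemble a derivation of $\Gamma \vdash M:\tS$ for the redex using the typing rules of Figure~\ref{fig:typ-rcl-int}. The only genuine work lies in the rules that create or consume a substitution, namely $(\beta)$ and $(\gamma\omega_2)$; the remaining rules merely rearrange weakening and contraction operators and transfer the typing premises verbatim.

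The central case is $(\beta)$, where $M \equiv (\lambda x.N)P$ and $M' \equiv N\isub{P}{x}$. Here I invoke the Inverse Substitution Lemma (Lemma~\ref{prop:inv-subst-lemma}), whose hypothesis that $P$ be typeable is exactly the proviso of the statement. It yields bases $\Delta_j$ and types $\tT_j$, for $j=0,\ldots,n$, with $\Delta_j \vdash P:\tT_j$ and $\Gamma', x:\cap_i^n \tT_i \vdash N:\tS$, where $\Gamma = \Gamma', \dztop \sqcap \Delta_1 \sqcap \ldots \sqcap \Delta_n$. Applying $(\to_I)$ gives $\Gamma' \vdash \lambda x.N : \cap_i^n \tT_i \to \tS$, and then $(\to_E)$, fed by the premises $\Delta_j \vdash P:\tT_j$, reconstitutes $\Gamma \vdash (\lambda x.N)P:\tS$, as required. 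I expect this to be the main obstacle, because it rests entirely on the Inverse Substitution Lemma and on correctly matching the split environment $\Gamma', \dztop \sqcap \Delta_1 \sqcap \ldots \sqcap \Delta_n$ produced by that lemma with the one demanded by $(\to_E)$. The proviso is indispensable precisely because the single occurrence of $P$ in $N\isub{P}{x}$ may be consumed in the degenerate $n=0$ branch of $(\to_E)$, where no type for $P$ could otherwise be recovered from the typing of $M'$ alone.

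The case $(\gamma\omega_2)$, with $M \equiv \cont{x}{x_1}{x_2}{(\weak{x_1}{N})} \to N\isub{x}{x_2} \equiv M'$, is handled in the same spirit: since the substituted term is the variable $x$, which is trivially typeable, the Inverse Substitution Lemma applies and, using that $\top$ is neutral for $\cap$, I first introduce $x_1$ with the type $\top$ by $(Weak)$ and then contract $x_1,x_2$ into $x$ by $(Cont)$, collapsing $\top \cap \tB = \tB$ to recover the type of $M$. For the structural rules $(\gamma_1)$--$(\gamma_3)$, $(\omega_1)$--$(\omega_3)$ and $(\gamma\omega_1)$, the Generation Lemma peels the outermost constructors off $M'$ and the very same premises feed the rearranged derivation of $M$; the only care needed is the bookkeeping of the environment decomposition in $(\to_E)$ and the neutrality of $\gtop$ for $\sqcap$. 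Apart from the $(\beta)$ case, the difficulties are thus purely administrative: tracking which variables carry $\top$ (those introduced by weakening) and ensuring that these intersections collapse correctly when a weakening variable is absorbed by a contraction.
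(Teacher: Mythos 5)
Your proposal is correct and follows the same route as the paper, which proves this proposition by case analysis on the applied reduction, with the Inverse Substitution Lemma (stated immediately before the proposition, with the typeability of $P$ as its hypothesis, matching the proviso) doing the work in the $(\beta)$ case. Your elaboration of the $(\gamma\omega_2)$ case via the trivially typeable variable $x$ and the neutrality of $\top$ for $\cap$, and of the structural cases via the Generation Lemma, fills in exactly the details the paper leaves implicit.
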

\begin{proof} By case study according to the applied
reduction.
\end{proof}

\begin{theorem}[SN $\Rightarrow$ typeability]\label{thm:SNtypable}
All strongly normalising $\rcl$-terms are typeable in the
$\rcl\cap$ system.
\end{theorem}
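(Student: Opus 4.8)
The plan is to argue by well-founded induction, exploiting the two facts already established for the extreme cases: every $\rcl$-normal form is typeable (Proposition~\ref{prop:nf-are-typ}) and typing is recovered backwards along a root reduction (Proposition~\ref{prop:sub-exp}, whose $\beta$-case is in turn powered by the Inverse substitution lemma~\ref{prop:inv-subst-lemma}). Since $M \in \SN$ and $\rightarrow_{\rcl}$ is finitely branching, the length $\ell(M)$ of a longest reduction sequence issuing from $M$ is a well-defined natural number, and I would run the induction on the lexicographic pair $\langle \ell(M), |M|\rangle$, where $|M|$ is the size of $M$. The base case is immediate: if $\ell(M)=0$ then $M$ is a $\rcl$-normal form, and Proposition~\ref{prop:nf-are-typ} supplies a typing $\Gamma \vdash M:\tS$.

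For the inductive step I would first treat the case where $M$ is itself a redex of some rule $\mu \in \circledR$ at its root. Let $M \rightarrow_\mu M'$; then $\ell(M') < \ell(M)$, so the induction hypothesis gives $\Gamma \vdash M':\tS$, and Proposition~\ref{prop:sub-exp} returns $\Gamma \vdash M:\tS$. The only backward step carrying a side condition is $\beta$: when $M \equiv (\lambda x.N)P$ I must additionally exhibit a typing of the argument $P$. Here $P$ is a proper subterm, so $\ell(P) \le \ell(M)$ and $|P| < |M|$; whichever coordinate drops, the pair $\langle \ell(P),|P|\rangle$ sits strictly below $\langle \ell(M),|M|\rangle$, and the induction hypothesis makes $P$ typeable, discharging the proviso. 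This is precisely why the measure must be lexicographic rather than $\ell$ alone.

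If $M$ is not a root redex yet is not normal, then its outermost constructor survives and every redex lies in a proper subterm. When $M = \lambda x.M_1$, $M = \weak{x}{M_1}$ or $M = \cont{x}{y}{z}{M_1}$, each immediate subterm is strongly normalising with strictly smaller size and no more reduction steps, so the induction hypothesis types it and I reassemble a typing of $M$ with $(\to_I)$, $(Weak)$ or $(Cont)$ respectively, invoking the Domain Correspondence Lemma~\ref{lem:dom-corr} to guarantee that the abstracted and contracted variables occur in the basis with the shape the rule demands. The delicate sub-case is the application $M = M_1 M_2$ that is not a root redex: then $M_1$ is neither an abstraction nor a weakening or contraction that triggers a rule, so $M$ has the form $h\,N_1\cdots N_k$ whose head $h$, once the weakening and contraction wrappers handled above are stripped, is a variable. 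I would type each $N_i$ by the induction hypothesis and assign $h$ a strict arrow type whose domains are exactly the types found for the $N_i$, so that the premises of $(\to_E)$ line up; this uses the freedom to choose a variable's type in $(Ax)$, and it is essential, since two independent intersection typings of $M_1$ and $M_2$ cannot in general be composed into a typing of $M_1 M_2$.

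I expect the main obstacle to be exactly this interface between the root-level formulation of head subject expansion and the need to apply it underneath a context. Because $\rcl$-terms are linear and the system is syntax-directed, the Generation Lemma pins down the type of the contracted subterm at its occurrence and expansion can be propagated through the surrounding context; the one point requiring care is that in the $(\to_E)$ rule the argument of an application is typed $n+1$ times, so when the contracted redex sits in argument position the expansion must be performed once for each of those typings before the application is rebuilt. Controlling this multiplicity, together with the lexicographic bookkeeping that keeps the $\beta$-argument within reach of the induction hypothesis, is the real content of the argument; the remaining resource-operator reductions ($\gamma$, $\omega$, $\gamma\omega$) are routine, each being a root redex dispatched uniformly by Proposition~\ref{prop:sub-exp}.
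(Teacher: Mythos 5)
Your proposal is essentially the paper's own proof: the same induction on the length of the longest reduction sequence with a subinduction on term size (your lexicographic pair $\langle \ell(M),|M|\rangle$), with normal forms discharged by Proposition~\ref{prop:nf-are-typ}, root redexes by Proposition~\ref{prop:sub-exp} (the $\beta$-proviso on the argument $P$ being covered, exactly as in the paper, by the size coordinate of the measure), and the remaining non-redex non-normal terms by typing their subterms via the induction hypothesis and reassembling with the corresponding rule. Your additional remarks --- choosing the type of the head variable in $(Ax)$ so that the premises of $(\to_E)$ line up, and the observation that independent typings of the two halves of an application cannot in general be composed --- are a more explicit rendering of the step the paper compresses into ``it is easy to build the typing for $M$'', and do not change the argument.
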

\begin{proof}
The proof is by induction on the length of the longest reduction
path out of a strongly normalising term $M$, with a subinduction
on the size of $M$.
\begin{itemize}
\item If $M$ is a normal form, then $M$ is typeable by
  Proposition~\ref{prop:nf-are-typ}.
\item If $M$ is itself a redex, let $M'$ be the
  term obtained by contracting the redex $M$.  $M'$ is also strongly normalising, hence by
  IH it is typeable.  Then $M$ is typeable, by
  Proposition~\ref{prop:sub-exp}. Notice that, if $M \equiv (\lambda x.N)P
  \to_{\beta} N\isub{P}{x} \equiv M'$, then, by IH, $P$ is typeable, since the length of
  the longest reduction path out of $P$ is smaller than that of $M$, and the size of
  $P$ is smaller than the size of $M$.
\item Next, suppose that $M$ is not itself a redex
  nor a normal form. Then $M$ is of one of the following forms: $\lambda x.N$, $\lambda
  x.\weak{x}{N}$, $xM_{1} \ldots M_{n}$, $\weak{x}{N}$, or
  $\cont{x}{x_1}{x_2}{NP},\;x_1 \in Fv(N),\;x_2 \in Fv(P)$ (where $M_{1}, \ldots,
    M_{n}, \; N$, and $NP$ are \emph{not} normal forms).  $M_{1}, \ldots, M_{n}$ and $NP$ are
    typeable by IH, as subterms of $M$. Then, it is easy to build the typing for
  $M$. For instance, let us consider the case $\cont{x}{x_1}{x_2}{NP}$
    with $x_1 \in Fv(N),\;x_2 \in Fv(P)$.  By induction $NP$ is typeable, hence $N$ is
    typeable with say $\Gamma, x_1:\tB \vdash N:\cap_i^n \tT_i \to \tS$ and $P$ is typeable with say
    $\Delta_j, x_2:\tC_j \vdash P: \tT_j$, for all $j = 0,\ldots,n$. Then using the rule ($E\to$)
    we obtain $\Gamma,\dztop \sqcap \Delta_1 \sqcap \ldots \sqcap \Delta_n, x_1:\tB,
    x_2:\cap_i^n \tC_i \vdash NP: \tS$. Finally, the rule $(Cont)$ yields $`G,
    \dztop \sqcap \Delta_1 \sqcap \ldots \sqcap \Delta_n, x :\tB \cap
    (\cap_i^n \tC_i) \vdash \cont{x}{x_1}{x_2}{NP} :\tS$.
\end{itemize}
\end{proof}

Finally, we can give a characterisation of strong normalisation in $\rcl$-calculus.

\begin{theorem}
In $\rcl$-calculus, the term $M$ is strongly normalising if and only if it is typeable in $\rcl\cap$.
\end{theorem}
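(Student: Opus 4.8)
The plan is to observe that this final theorem is simply the conjunction of the two implications established earlier in this section, so the work is to assemble them rather than to prove anything new. The statement is a biconditional, and I would prove it by splitting into the two directions and citing the appropriate earlier result for each.

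For the direction ``typeable $\Rightarrow$ strongly normalising'', I would invoke Theorem~\ref{th:typ=>SN}: if $\Gamma \vdash M:\tS$ for some basis $\Gamma$ and strict type $\tS$, then $M \in \SN$. Recall that this implication was obtained by the reducibility method, building on the soundness of $\rcl\cap$ with respect to the type interpretation (Proposition~\ref{prop:sound}) together with the fact that every type interpretation is $\circledR$-saturated and therefore contained in $\SN$ (Proposition~\ref{prop:saturSets}). Evaluating $M$ under the identity valuation $\rho_0(x)=x$ recovers $M$ itself, placing it inside $\ti{\tS}\subseteq\SN$.

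For the converse direction ``strongly normalising $\Rightarrow$ typeable'', I would invoke Theorem~\ref{thm:SNtypable}: every $M \in \SN$ is typeable in $\rcl\cap$. This was established by induction on the length of the longest reduction path out of $M$ (with a subinduction on the size of $M$), using typeability of normal forms (Proposition~\ref{prop:nf-are-typ}) as the base case and head subject expansion (Proposition~\ref{prop:sub-exp}) to lift typeability back across a contracted redex.

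Since neither direction requires any argument beyond citing these theorems, I do not expect a genuine obstacle here; the only point deserving care is to confirm that the two earlier results together cover exactly the biconditional as stated, with no mismatch in the form of the typing judgment (both phrase typeability as the existence of a basis $\Gamma$ and a strict type $\tS$ with $\Gamma \vdash M:\tS$). Combining the two implications yields the desired equivalence, which completes the characterisation of strong normalisation in the $\rcl$-calculus.
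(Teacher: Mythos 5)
Your proposal is correct and matches the paper's own proof exactly: the theorem is obtained as an immediate consequence of Theorem~\ref{th:typ=>SN} for one direction and Theorem~\ref{thm:SNtypable} for the other. The additional recap of how each of those theorems was established is accurate but not needed for this final assembly step.
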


\begin{proof}
Immediate consequence of Theorems~\ref{th:typ=>SN} and~\ref{thm:SNtypable}.
\end{proof}

\section{Intersection types for the sequent resource control lambda calculus $\llG$}
\label{sec:lambda_gtz}

In this section we focus on the sequent resource control lambda
calculus $\llG$. First we revisit its syntax and operational
semantics; further we introduce an intersection type assignment
system and finally we prove that typeability in the proposed system
completely characterises the set of strongly normalising
$\llG$-expressions.

\subsection{Resource control sequent lambda calculus $\llG$}
\label{subsec:llG}

The \emph{resource control lambda Gentzen} calculus $\llG$ is
derived from the $\lG$-calculus (more precisely its confluent
sub-calculus $\lG_V$) by adding the explicit operators for
weakening and contraction. It is proposed in
\cite{ghilivetlesczuni11}. The abstract syntax of $\llG$
pre-expressions is the following:
$$
\begin{array}{lcrcl}
\textrm{Pre-values}    &  & F & ::= & x\,|\,\lambda
x.f\,|\,\weak{x}{f}
\,|\, \cont{x}{x_1}{x_2}{f}\\
\textrm{Pre-terms}    &  & f & ::= & F \,|\,fc\\
\textrm{Pre-contexts} & & c & ::= & \bindx
f\,|\,f::c\,|\,\weak{x}{c} \,|\, \cont{x}{x_1}{x_2}{c}
\end{array}
$$
where $x$ ranges over a denumerable set of term variables.

A \emph{pre-value} can be a variable, an abstraction, a weakening
or a contraction; a \emph{pre-term} is either a value or a cut (an
application). A \emph{pre-context} is one of the following:
a~selection, a context constructor (usually called cons), a
weakening on pre-context or a contraction on a pre-context.
Pre-terms and pre-contexts are together referred to as the
\emph{pre-expressions} and will be ranged over by $E$.
Pre-contexts $\weak{x}{c}$ and $\cont{x}{x_1}{x_2}{c}$ behave
exactly like corresponding pre-terms $\weak{x}{f}$ and
$\cont{x}{x_1}{x_2}{f}$ in the untyped calculus, so they will
mostly not be treated separately. The set of free variables of a
pre-expression is defined analogously to the free variables in
$\rcl$-calculus with the following additions:
% \hspace*{3mm}
$$\begin{array}{c} Fv(fc) = Fv(f) \cup Fv(c); \quad Fv(\bindx f) =
Fv(f)\setminus \{x\};\quad Fv(f::c) = Fv(f) \cup Fv(c).
\end{array}$$

Like in $\rcl$-calculus, the set of $\llG$-expressions
(namely values, terms and contexts), denoted by $\LlG \cup \LlGC$,
is a subset of the set of pre-expressions, defined in
Figure~\ref{fig:wf-gtz}. Values are denoted by $T,$ terms by
$t,u,v...$, contexts by $k,k',...$ and expressions by $e,e'$.

\begin{figure}[htpb]
\centerline{ \framebox{ $
    \begin{array}{c}
      \begin{array}{c@{\qquad}c}
        \infer{x \in \LlG}{} &
        \infer{\lambda x.f \in \LlG}
        {f \in \LlG \;\; x \in Fv(f)}
      \end{array}  \\\\
      \begin{array}{c}
        \infer{f c \in \LlG}
        {f \in \LlG\;\; c \in \LlGC \;\; Fv(f) \cap Fv(c) = \emptyset}
      \end{array}
      \\\\
    \begin{array}{c@{\qquad}c}
        \infer{\bindx f \in \LlGC}{f \in \LlG\;\;\;x \in Fv(f)} &
        \infer{f::c \in \LlGC}
                    {f \in \LlG\;\;\;c \in \LlGC\;\;\; Fv(f)\cap Fv(c) = \emptyset}
      \end{array}
      \\\\
    \begin{array}{c}
      \infer{\weak{x}{E} \in \LlG \cup \LlGC}
      {E \in \LlG \cup \LlGC \;\; x \notin Fv(E)}
   \end{array} \\\\
   \begin{array}{c}
      \infer{\cont{x}{x_1}{x_2}{E} \in \LlG \cup \LlGC}
      {E \in \LlG \cup \LlGC \;\; x_{1} \not = x_{2}\;\; x_1,x_2 \in Fv(E) \;\; x \notin Fv(E)\setminus \{x_{1},x_{2}\}}
    \end{array}
  \end{array}
$ }} \caption{$\LlG \cup \LlGC$:  $\llG$-expressions}
\label{fig:wf-gtz}
\end{figure}

The computation over the set of $\llG$-expressions reflects the
cut-elimination process. Four groups of reductions in
$\llG$-calculus are given in Figure~\ref{fig:red-llG}.

\begin{figure}[ht]
\centerline{ \framebox{ $
\begin{array}{rrcl}
(\beta_\mathsf{g}) & (\lambda x.t)(u::k) & \rightarrow & u(\bindx tk)\\
(\sigma) & T(\bindx v)              & \rightarrow & v\isub{T}{x}\\
(\pi) & (tk)k'                    & \rightarrow & t(\append{k}{k'})\\
(\mu) & \bindx xk             & \rightarrow & k\\[2mm]
(\gamma_1)       & \cont{x}{x_1}{x_2}{(\lambda y.t)} & \rightarrow
& \lambda y.\cont{x}{x_1}{x_2}{t}\\
(\gamma_2)       & \cont{x}{x_1}{x_2}{(tk)} & \rightarrow & (\cont{x}{x_1}{x_2}{t})k, \;\;\;\mbox{if} \; x_1,x_2 \notin Fv(k)\\
(\gamma_3)       & \cont{x}{x_1}{x_2}{(tk)} & \rightarrow & t(\cont{x}{x_1}{x_2}{k}), \;\;\;\mbox{if} \; x_1,x_2 \notin Fv(t)\\
(\gamma_4)       & \cont{x}{x_1}{x_2}{(\bindy t)} & \rightarrow & \bindy (\cont{x}{x_1}{x_2}{t})\\
(\gamma_5)       & \cont{x}{x_1}{x_2}{(t::k)} & \rightarrow & (\cont{x}{x_1}{x_2}{t})::k, \;\;\;\mbox{if} \; x_1,x_2 \notin Fv(k)\\
(\gamma_6)       & \cont{x}{x_1}{x_2}{(t::k)} & \rightarrow & t::(\cont{x}{x_1}{x_2}{k}), \;\;\;\mbox{if} \; x_1,x_2 \notin Fv(t)\\[2mm]
(\omega_1)       & \lambda x.(\weak{y}{t}) & \rightarrow & \weak{y}{(\lambda x.t)},\;\;\;x \neq y\\
(\omega_2)       & (\weak{x}{t})k & \rightarrow & \weak{x}{(tk)}\\
(\omega_3)       & t(\weak{x}{k}) & \rightarrow & \weak{x}{(tk)}\\
(\omega_4)       & \bindx (\weak{y}{t}) & \rightarrow & \weak{y}{(\bindx t)},\;\;\;x \neq y\\
(\omega_5)       & (\weak{x}{t})::k & \rightarrow & \weak{x}{(t::k)}\\
(\omega_6)       & t::(\weak{x}{k}) & \rightarrow & \weak{x}{(t::k)}\\[2mm]
(\gamma \omega_1)       & \cont{x}{x_1}{x_2}{(\weak{y}{e})} & \rightarrow &
\weak{y}{(\cont{x}{x_1}{x_2}{e})}  \qquad x_1\neq y \neq x_2\\
(\gamma \omega_2)       & \cont{x}{x_1}{x_2}{(\weak{x_1}{e})} & \rightarrow & e\isub{x}{x_{2}}\\
\end{array}
$ }} \caption{Reduction rules of $\llG$-calculus}
\label{fig:red-llG}
\end{figure}

The first group consists of $\beta_\mathsf{g}$, $\pi$, $\sigma$
and $\mu$ reductions from the $\lG$. New reductions are added to
deal with explicit contraction ($\gamma$ reductions) and weakening
($\omega$ reductions). The groups of $\gamma$ and $\omega$
reductions consist of rules that perform propagation of
contraction into the expression and extraction of weakening out of
the expression. This discipline allows us to optimize the
computation by delaying the duplication of terms on the one hand,
and by performing the erasure of terms as soon as possible on the
other. The equivalencies in $\llG$ are the ones given in
Figure~\ref{fig:equiv-rcl}, except for the fact that they refer to
$\llG$-expressions.

The meta-substitution $t\isub{u}{x}$ is defined as in
Figure~\ref{fig:sub-rcl} with the following additions:
$$
\begin{array}{rclcrcl}
(tk)\isub{u}{x}            & = & t\isub{u}{x}k, \;\; x \notin
Fv(k)
& & (tk)\isub{u}{x} & = & tk\isub{u}{x}, \;\; x \notin Fv(t) \\
(\bindy t)\isub{u}{x} & = & \bindy t\isub{u}{x} \\
(t::k)\isub{u}{x}        & = & t\isub{u}{x}::k, \;\; x \notin
Fv(k) & & (t::k)\isub{u}{x}     & = &t::k\isub{u}{x}, \;\; x
\notin Fv(t)
\end{array}
$$
In the $\pi$ rule, the meta-operator $@$, called
\emph{append}, joins two contexts and is defined as:
$$
\begin{array}{rclcrcl}
       \app{(\bindx t)}{k'} & = & \bindx tk'                        & \qquad & \app{(u::k)}{k'} & = & u::(\app{k}{k'})\\
\app{(\weak{x}{k})}{k'} & = & \weak{x}{(\app{k}{k'})} & \qquad & \app{(\cont{x}{y}{z}{k})}{k'} & = & \cont{x}{y}{z}{(\app{k}{k'})}.
\end{array}
$$

\subsection{Intersection types for $\llG$}
\label{subsec:llG-inttypes}

The type assignment system $\llG \cap$ that assigns strict types
to $\llG$-expressions is given in Figure~\ref{fig:inttyp-llG}. Due
to the sequent flavour of the $\llG$-calculus, here we distinguish
two sorts of type assignments:
\begin{itemize}
\item[-]
$\Gamma \vdash t:\tS$ for typing a term and
\item[-]
$\Gamma;\tB \vdash k:\tS$,
a type assignment with a \emph{stoup},
for typing a context.
\end{itemize}
A stoup is a place for the last formula in the antecedent, after
the semi-colon. The formula in the stoup is the place where
computation will continue.

The syntax of types and the related definitions are the same as in
$\rcl\cap$. The $\llG \cap$ system is also syntax-directed i.e.
the intersection is incorporated into already existing rules of
the simply-typed system. In the style of sequent calculus, left
intersection introduction is managed by the contraction rules
$(Cont_t)$ and $(Cont_k)$, whereas the right intersection
introduction is performed by the cut rule $(Cut)$ and left arrow
introduction rule $(\to_L)$. In these two rules $Dom(\Gamma_1)=
\ldots = Dom(\Gamma_n)$. The role of $\gztop$ has been already
explained in subsection~\ref{subsec:rcl-inttypes}.

\begin{figure}[ht]
\centerline{ \framebox{
$
\begin{array}{c}
\\
\infer[(Ax)]{x:\tS \vdash x:\tS}{}\\ \\
\infer[(\to_R)]{\Gamma \vdash \lambda x.t:\tA\to \tS}
                        {\Gamma,x:\tA \vdash t:\tS} \quad\quad
\infer[(Sel)]{\Gamma; \tA \vdash \bindx t:\tS}
                    {\Gamma, x:\tA \vdash t:\tS}\\ \\
\infer[(\to_L)]{\gztop \sqcap \Gamma_1 \sqcap ... \sqcap \Gamma_n,
\Delta;
            \cap_j^{m}(\cap_i^{n}\tS_i \to \tT_j) \vdash t::k:\tR}
             {\Gamma_0 \vdash t:\tS_0 & ... & \Gamma_n \vdash t:\tS_n & \Delta;\cap_j^{m}\tT_j \vdash k:\tR}\\ \\
\infer[(Cut)]{\gztop \sqcap \Gamma_1 \sqcap ... \sqcap \Gamma_n,
\Delta \vdash tk:\tT}
                {\Gamma_0 \vdash t:\tS_0 & ... & \Gamma_n \vdash t:\tS_n & \Delta; \cap_{i}^{n} \tS_i \vdash k:\tT}\\ \\
\infer[(Cont_t)]{\Gamma, z:\tA \cap \tB \vdash
\cont{z}{x}{y}{t}:\tS}
                    {\Gamma, x:\tA, y:\tB \vdash t:\tS} \quad\quad
\infer[(Weak_t)]{\Gamma, x:\top \vdash \weak{x}{t}:\tS}
                    {\Gamma \vdash t:\tS}\\ \\
\infer[(Cont_k)]{\Gamma, z:\tA \cap \tB; \tC \vdash
\cont{z}{x}{y}{k}:\tS}
                    {\Gamma, x:\tA, y:\tB; \tC \vdash k:\tS} \quad\quad
\infer[(Weak_k)]{\Gamma, x:\top; \tC \vdash \weak{x}{k}:\tS}
                    {\Gamma; \tC \vdash k:\tS}\\
\end{array}
$ }} \caption{$\llG \cap$: $\llG$-calculus with intersection types}
\label{fig:inttyp-llG}
\end{figure}

The Generation lemma induced by the proposed system is the
following:
\begin{lemma}[Generation lemma for $\llG\cap$]
\label{prop:intGL-Gtz} \rule{0in}{0in}
\begin{enumerate}
\item[(i)]\quad $\Gamma \vdash \lambda x.t:\tT\;\;$ iff there
exist $`a$ and $`s$ such that $\;\tT \equiv \tA\rightarrow
\tS\;\;$ and $\;\Gamma,x:\tA \vdash t:\tS.$
\item[(ii)]\quad $\Gamma;\tC \vdash t::k:\tR\;\;$ iff
$\;\Gamma={\Gamma'_0}^\top \sqcap \Gamma'_1 \sqcap ... \sqcap
\Gamma'_n, \Delta,\;$ $\tC \equiv \cap_j^{m} (\cap_i^{n}\tS_i \to
\tT_j)$, $\;\Delta;\cap_j^{m}\tT_j \vdash k:\tR\;$ and
$\;\Gamma'_l \vdash t:\tS_l$ for all $l \in \{0, \ldots, n\}$.
\item[(iii)]\quad $\Gamma \vdash tk:\tS\;\;$ iff
$\;\Gamma={\Gamma'_0}^\top \sqcap \Gamma'_1 \sqcap ... \sqcap
\Gamma'_n,\Delta,\;$ and there exist $\tT_j, j = 0, \ldots, n$
such that for all $j \in \{0, \ldots, n\}\;$ the following holds:
$\Gamma'_j \vdash t:\tT_j$, and $\;\Delta;\cap_{i}^{n} \tT_i
\vdash k:\tS.$
\item[(iv)]\quad $\Gamma;\tA \vdash \bindx t:\tS\;\;$ iff
$\;\Gamma,x:\tA \vdash t:\tS.$
\item[(v)]\quad $\Gamma \vdash \cont{z}{x}{y}{t}:\tS\;\;$ iff
there exist $`G', `a, `b$ such that $\;\Gamma=\Gamma', z:`a\cap`b$
and \\ $\;\Gamma', x:\tA, y:\tB \vdash t:\tS.$
\item[(vi)]\quad $\Gamma \vdash \weak{x}{t}:\tS\;\;$ iff
$\;\Gamma=\Gamma', x:\top$ and $\;\Gamma' \vdash t:\tS.$
\item[(vii)]\quad $\Gamma; \tC \vdash \cont{z}{x}{y}{k}:\tS\;\;$
iff there exist $`G',`a, `b$ such that $\;\Gamma=\Gamma', z: \tA\cap \tB$ and\\
$\;\Gamma', x:\tA, y:\tB; \tC \vdash k:\tS.$
\item[(viii)]\quad $\Gamma; \tC \vdash \weak{x}{k}:\tS\;\;$ iff
$\;\Gamma=\Gamma', x:\top$ and $\;\Gamma'; \tC \vdash k:\tS.$
\end{enumerate}
\end{lemma}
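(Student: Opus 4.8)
The plan is to exploit the fact that the type assignment system $\llG\cap$ of Figure~\ref{fig:inttyp-llG} is syntax-directed, exactly as was the case for $\rcl\cap$ in Lemma~\ref{prop:intGL}. The key structural observation is that the outermost constructor of the subject expression, together with the shape of the judgment (a plain sequent $\Gamma \vdash t:\tS$ for a term, or a sequent with a stoup $\Gamma;\tB \vdash k:\tS$ for a context), uniquely determines which rule can be applied last in any derivation. Concretely, an abstraction $\lambda x.t$ can only be the conclusion of $(\to_R)$, a cut $tk$ only of $(Cut)$, a cons $t::k$ only of $(\to_L)$, a selection $\bindx t$ only of $(Sel)$, while the two structural constructors $\cont{z}{x}{y}{\cdot}$ and $\weak{x}{\cdot}$ are disambiguated by the judgment form into $(Cont_t)/(Weak_t)$ for terms and $(Cont_k)/(Weak_k)$ for contexts. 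No rule other than the matching one has a conclusion of the required shape.

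Granting this, each of the eight equivalences is established by a two-line argument. For the \emph{if} direction I would simply apply the matching rule to the stated premises and read off the conclusion, so that this direction is literally an instance of the corresponding inference. For the \emph{only if} direction I would invert the (unique) last rule of a given derivation: since that rule is forced by the syntactic analysis above, its premises are exactly the hypotheses listed in the lemma, and the context decomposition asserted in each item is precisely the one produced in the conclusion of the rule. Thus (i) inverts $(\to_R)$, (ii) inverts $(\to_L)$, (iii) inverts $(Cut)$, (iv) inverts $(Sel)$, (v) inverts $(Cont_t)$, (vi) inverts $(Weak_t)$, (vii) inverts $(Cont_k)$, and (viii) inverts $(Weak_k)$.

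The only points demanding a little care --- and the closest thing to an obstacle --- are the two rules $(\to_L)$ and $(Cut)$ that bundle right intersection introduction with several premises and with the $\gztop$ device. Here the inversion must reproduce the exact environment decomposition $\Gamma = {\Gamma'_0}^{\top} \sqcap \Gamma'_1 \sqcap \ldots \sqcap \Gamma'_n, \Delta$ stated in parts (ii) and (iii), matching the $n$ premises typing the same subject $t$ with strict types $\tS_0,\ldots,\tS_n$ (respectively $\tT_0,\ldots,\tT_n$), and correctly handling the degenerate case $n=0$, in which the single premise $\Gamma'_0 \vdash t:\tS_0$ contributes only through $\gztop$. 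Using that environments of the form $\gtop$ are neutral for $\sqcap$ (as noted after Definition~\ref{def:typass-basis}) and that $\sqcap$ is associative, commutative and idempotent, this bookkeeping matches up verbatim. All remaining cases are immediate, so the proof, like that of Lemma~\ref{prop:intGL}, reduces to a straightforward inspection of the syntax-directed rules.
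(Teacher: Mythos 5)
Your proposal is correct and takes essentially the same route as the paper: the paper states this lemma without an explicit proof, but its analogue for $\rcl\cap$ (Lemma~\ref{prop:intGL}) is justified by exactly the observation you make, namely that all rules are syntax-directed so each item follows by inverting the unique applicable last rule. Your additional care about the environment decomposition in the $(\to_L)$ and $(Cut)$ cases and the $n=0$ degenerate case is a faithful elaboration of that same argument, not a different method.
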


The proposed system satisfies the following properties.

\begin{lemma}
\label{prop:bases-weak}
\rule{0in}{0in}
\begin{itemize}
\item[(i)] If $\;\Gamma \vdash t:\tS\;$, then
$\;Dom(\Gamma)= Fv(t).$
\item[(ii)] If $\;\Gamma; \tA \vdash k:\tS\;$, then
$\;Dom(\Gamma)= Fv(k).$
\end{itemize}
\end{lemma}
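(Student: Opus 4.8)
The plan is to prove the two statements simultaneously by induction on the derivations of $\Gamma \vdash t:\tS$ and $\Gamma;\tA \vdash k:\tS$, mirroring the argument used for the Domain Correspondence Lemma of $\rcl\cap$ (Lemma~\ref{lem:dom-corr}). Since the grammar of $\llG$ defines terms and contexts mutually, and the type system of Figure~\ref{fig:inttyp-llG} is syntax-directed, a single simultaneous induction on the height of the derivation covers both sorts of judgment; at each step the last rule applied is forced by the outermost constructor of the typed expression.

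As in the $\rcl\cap$ case, I would sort the typing rules of Figure~\ref{fig:inttyp-llG} into three groups according to their action on variables. The rules that \emph{introduce} a variable into both the basis and the expression are $(Ax)$, $(Cont_t)$, $(Cont_k)$, $(Weak_t)$ and $(Weak_k)$: each adds exactly one variable to $Dom(\Gamma)$ precisely when that variable becomes free in the conclusion (the duplicated $z$ for contraction, the weakened $x$ for weakening, the axiom variable for $(Ax)$). The rules that \emph{remove} variables are $(\to_R)$, $(Sel)$, $(Cont_t)$ and $(Cont_k)$: the bound $x$ of $\lambda x.t$ and of $\bindx t$ disappears both from the domain and from the free variables, and $(Cont_t)$, $(Cont_k)$ discard $x,y$ at the same moment they introduce $z$. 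For each of these the equality $Dom(\Gamma)=Fv(\,\cdot\,)$ in the conclusion follows immediately from the induction hypothesis applied to the premise, together with the definitions of $Fv$ and of basis extension.

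The only rules that combine two sub-derivations are $(Cut)$ and $(\to_L)$, and these are where the argument needs care. In $(Cut)$ the conclusion basis is $\gztop \sqcap \Gamma_1 \sqcap \ldots \sqcap \Gamma_n, \Delta$, where the side condition gives $Dom(\Gamma_0)=\ldots=Dom(\Gamma_n)$. By the induction hypothesis each $\Gamma_j$ satisfies $Dom(\Gamma_j)=Fv(t)$, and since $\gztop$ has the same domain as $\Gamma_0$ and the intersection $\sqcap$ preserves this common domain, $Dom(\gztop \sqcap \Gamma_1 \sqcap \ldots \sqcap \Gamma_n)=Fv(t)$. The induction hypothesis on the context premise gives $Dom(\Delta)=Fv(k)$, so the conclusion basis has domain $Fv(t)\cup Fv(k)=Fv(tk)$, using that well-formedness of $tk$ forces $Fv(t)\cap Fv(k)=\emptyset$, making the extension by $\Delta$ legal. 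The rule $(\to_L)$ is handled identically, with $Fv(t::k)=Fv(t)\cup Fv(k)$.

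I expect the main (indeed only) obstacle to be the $\sqcap$/$\gztop$ bookkeeping in the $(Cut)$ and $(\to_L)$ cases: one must check that intersecting bases of a common domain leaves that domain unchanged and that the neutral basis $\gztop$ contributes exactly that same domain, so that no free variable of $t$ is lost and none is added spuriously. Everything else is a direct reading-off of the definitions, so the proof proceeds rule by rule just as in Lemma~\ref{lem:dom-corr}.
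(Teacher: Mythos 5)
Your proposal is correct and follows essentially the same route as the paper, which proves this lemma by the same rule-by-rule classification used for the Domain Correspondence Lemma of $\rcl\cap$ (the paper's proof is literally ``Similar to the proof of Lemma~\ref{lem:dom-corr}''). Your additional care with the $\sqcap$/$\gztop$ bookkeeping in the $(Cut)$ and $(\to_L)$ cases is exactly the point the paper leaves implicit, and your treatment of it is sound.
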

\begin{proof}
Similar to the proof of Lemma~\ref{lem:dom-corr}.
\end{proof}

\begin{lemma}[Substitution lemma for $\llG \cap$]
\label{prop:sub-lemma-llG}
\rule{0in}{0in}
\begin{itemize}
\item[(i)] If $\;\Gamma, x:\cap_i^{n}\tT_i \vdash t:\tS\;$ and for
all $j=0,\ldots,n$, $\;\Delta_j \vdash u:\tT_j$, then\\
$\;\Gamma, \dztop \sqcap \Delta_1 \sqcap ... \sqcap \Delta_n
\vdash t\isub{u}{x}:\tS.$
\item[(ii)] If $\;\Gamma, x:\cap_i^{n} \tT_i;\tA \vdash k:\tS\;$
and for all $j=0,\ldots,n$, $\;\Delta_j \vdash u:\tT_j$,
then\\
 $\;\Gamma, \dztop \sqcap \Delta_1 \sqcap ... \sqcap
\Delta_n;\tA \vdash k\isub{u}{x}:\tS.$
\end{itemize}
\end{lemma}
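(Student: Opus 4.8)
The plan is to establish (i) and (ii) \emph{simultaneously} by mutual structural induction on the term $t$ and the context $k$, following closely the pattern of the analogous Substitution lemma for $\rcl\cap$ (Lemma~\ref{prop:sub-lemma}). The two statements must be proved together because a cut $tk$ contains a context, while a cons $t::k$ and a selection $\bindy t$ contain terms; hence neither induction closes on its own. In every case I would first invert the typing derivation of the expression according to its outermost constructor using the Generation lemma for $\llG\cap$ (Lemma~\ref{prop:intGL-Gtz}), read off where the variable $x$ occurs, and peel off the matching defining clause of the meta-substitution.

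The purely term-level cases --- axiom, abstraction, weakening, and the non-matching contraction $\cont{y}{y_1}{y_2}{t}$ with $y \neq x$ --- are identical to their $\rcl\cap$ counterparts and transfer almost verbatim. In particular the base case $t=x$ forces $n=1$ (so $\cap_i^1\tT_i \equiv \tT_1$ and the single hypothesis $\Delta_1 \vdash u:\tT_1$ gives the result), while the matching weakening case $t=\weak{x}{P}$ forces $n=0$ and is handled by Generation lemma~(vi) followed by repeated applications of $(Weak_t)$ to reintroduce $Fv(u)$ with type $\top$.

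The genuinely new cases are the sequent constructors. For a cut $t = t_1 k_1$, Generation lemma~(iii) decomposes the environment as ${\Gamma'_0}^\top \sqcap \Gamma'_1 \sqcap \ldots \sqcap \Gamma'_n, \Delta$; depending on whether $x \in Fv(t_1)$ or $x \in Fv(k_1)$ I would push the substitution into the appropriate subexpression, invoke part~(i) or part~(ii) of the induction hypothesis respectively, and reassemble with $(Cut)$. The cons case $t_1 :: k_1$ is treated in the same way via Generation lemma~(ii) and the rule $(\to_L)$, and the selection case $k = \bindy t_1$ reduces directly to part~(i) through Generation lemma~(iv) and $(Sel)$. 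In each of these the $\sqcap$-decomposition of the bases lines up exactly with the way $\isub{u}{x}$ distributes over the constructor.

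The main obstacle, precisely as in the $\rcl\cap$ proof, is the contraction case in which $x$ is the contracted variable, i.e.\ $t = \cont{x}{x_1}{x_2}{P}$ (and its context analogue $k = \cont{x}{x_1}{x_2}{c}$). Here Generation lemma~(v) forces the intersection $\cap_i^n\tT_i$ to split as $\cap_{i=1}^m\tT_i \cap \cap_{i=m+1}^n\tT_i$, the free variables of $u$ must be renamed into two fresh copies $u_1,u_2$, and the single substitution unfolds into the parallel substitution $P[u_1/x_1,u_2/x_2]$ of Definition~\ref{def:par-sub}. I expect to apply the induction hypothesis twice --- once for each copy --- and then to contract the renamed free variables back together, using the equivalences of Figure~\ref{fig:equiv-rcl} to rearrange the nested contractions into the order produced by the parallel substitution, before closing with $(Cont_t)$ (resp.\ $(Cont_k)$). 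Getting the free-variable renaming and the ordering of the contractions to match the definition of parallel substitution is the delicate point; everything else is routine propagation of the induction hypothesis through the syntax-directed rules.
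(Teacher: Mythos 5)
Your proposal matches the paper's proof exactly: the paper establishes this lemma by mutual induction on the structure of terms and contexts, which is precisely your strategy, and the case analysis you sketch (including the delicate contraction case with renamed copies of $u$ and the parallel substitution) mirrors the detailed argument the paper gives for the analogous Substitution lemma in $\rcl\cap$. No gaps; this is the intended argument.
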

\begin{proof}
By mutual induction on the structure of terms and contexts.
\end{proof}

\begin{proposition}[Append lemma]
\label{prop:app-lemma} \rule{0in}{0in} If $\;\Gamma_j;\tA \vdash
k:\tT_j\;$ for all $j=0,\ldots,n$, and $\;\Delta;\cap_i^{n} \tT_i
\vdash k':\tS$, then $\;\gztop \sqcap \Gamma_1 \sqcap \ldots
\sqcap \Gamma_n, \Delta;\tA \vdash \app{k}{k'}:\tS.$
\end{proposition}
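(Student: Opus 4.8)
The plan is to argue by induction on the structure of the context $k$, since the append operator $@$ is defined by structural recursion on its first argument; the parameter $n$ (the number of intermediate types $\tT_j$) stays fixed throughout. There are exactly four cases, matching the four defining clauses of $@$: the selection $\bindx t$, the cons $u::k_1$, the context weakening $\weak{x}{k_1}$, and the context contraction $\cont{x}{y}{z}{k_1}$. In every case the recipe is the same: use the Generation Lemma for $\llG\cap$ (Lemma~\ref{prop:intGL-Gtz}) to strip the outermost constructor off each hypothesis $\Gamma_j;\tA \vdash k:\tT_j$, feed the resulting judgements into the induction hypothesis, and re-apply the typing rule associated with that constructor. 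Lemma~\ref{prop:bases-weak} (domain correspondence) guarantees that all bases involved have matching domains, so every bases intersection $\sqcap$ that appears is well defined.

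For the base case $k = \bindx t$, recall that $\app{(\bindx t)}{k'} = \bindx (tk')$. Generation Lemma~\ref{prop:intGL-Gtz}(iv) turns each hypothesis into $\Gamma_j, x:\tA \vdash t:\tT_j$. Applying the rule $(Cut)$ to these $n+1$ judgements together with $\Delta;\cap_i^n\tT_i \vdash k':\tS$ types the cut $tk'$; the variable $x$, shared by all premises with type $\tA$, is assigned $\top$ in the top component and $\tA$ in the remaining ones, so it collapses to $\tA$ because $\top$ is neutral for $\sqcap$ and $\cap$ is idempotent. Rebinding $x$ with $(Sel)$ restores the stoup $\tA$ and yields $\gztop \sqcap \Gamma_1 \sqcap \ldots \sqcap \Gamma_n, \Delta;\tA \vdash \bindx(tk'):\tS$, as required.

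The cons case $k = u::k_1$, where $\app{(u::k_1)}{k'} = u::\app{k_1}{k'}$, is the technical heart of the argument, and I expect it to be the main obstacle. Generation Lemma~\ref{prop:intGL-Gtz}(ii) forces the stoup $\tA$ to be an intersection of arrows sharing a common domain, and splits each $\Gamma_j$ into a part typing $u$ and a part $\Lambda_j$ typing $k_1$ with the codomain intersection as its stoup. The delicate point is that, working up to the idempotency, commutativity and associativity of $\cap$, the common domain of $\tA$ forces the $u$-part to be the \emph{same} for every $j$ (it is determined by the fixed type $\tA$ and the fixed term $u$), while only $\Lambda_j$ varies. Applying the induction hypothesis to the family $\{\Lambda_j;\, \cdots \vdash k_1:\tT_j\}_j$ and the $k'$-hypothesis types $\app{k_1}{k'}$, and one application of $(\to_L)$ with the shared premises for $u$ reconstructs $u::\app{k_1}{k'}$. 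What must then be checked is that the resulting base equals $\gztop \sqcap \Gamma_1 \sqcap \ldots \sqcap \Gamma_n, \Delta$: this holds because the shared $u$-part appears once in the $\top$-ed top component and once in each of the $n$ other components, so repeatedly intersecting it with itself collapses back to a single copy by idempotency of $\cap$ and neutrality of $\top$. Reconciling the intersection of the $n+1$ bases with the single shared typing of $u$ is precisely the bookkeeping I would handle most carefully.

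The two remaining cases are routine variants. For $k = \weak{x}{k_1}$ we have $\app{(\weak{x}{k_1})}{k'} = \weak{x}{\app{k_1}{k'}}$; Generation Lemma~\ref{prop:intGL-Gtz}(viii) peels off $x:\top$ from each $\Gamma_j$, the induction hypothesis types $\app{k_1}{k'}$, and $(Weak_k)$ re-adds $x:\top$, which merges trivially since $\top\cap\cdots\cap\top=\top$. For $k = \cont{x}{y}{z}{k_1}$ we have $\app{(\cont{x}{y}{z}{k_1})}{k'} = \cont{x}{y}{z}{\app{k_1}{k'}}$; Generation Lemma~\ref{prop:intGL-Gtz}(vii) replaces $x:\tB_j\cap\tC_j$ in $\Gamma_j$ by $y:\tB_j,\,z:\tC_j$, the induction hypothesis types $\app{k_1}{k'}$ with $y$ and $z$ carrying the intersections of the $\tB_j$ and of the $\tC_j$ respectively, and $(Cont_k)$ contracts $y,z$ back into $x$. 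A short computation with associativity, commutativity and idempotency of $\cap$ (and neutrality of $\top$) confirms that the type of $x$ comes out as the intended $\top\cap(\tB_1\cap\tC_1)\cap\cdots\cap(\tB_n\cap\tC_n)$, completing the induction.
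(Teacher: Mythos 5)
Your proposal is correct and follows exactly the route the paper takes: the paper's proof is simply ``by induction on the structure of the context $k$,'' and your four cases (selection, cons, weakening, contraction), each handled by the Generation Lemma followed by the induction hypothesis and re-application of the corresponding typing rule, are the natural unfolding of that induction. The bookkeeping you flag in the cons case --- that the shared typing of $u$ collapses under $\sqcap$ by idempotency of $\cap$ and neutrality of $\top$ --- is indeed the only delicate point, and you resolve it correctly.
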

\begin{proof}
By induction on the structure of the context $k$.
\end{proof}

\begin{proposition}[Subject equivalence for $\llG \cap$] \label{prop:subequiv-sequent}
\rule{0in}{0in}
\begin{itemize}
\item[(i)] For every $\llG$-term $t$: if $\;\Gamma \vdash t:\tS\;$
and $t \equiv_{\llG} t'$, then $\;\Gamma \vdash t':\tS.$
\item[(ii)] For every $\llG$-context $k$: if $\;\Gamma; \tA \vdash
k:\tS\;$ and $k \equiv_{\llG} k'$, then $\;\Gamma; \tA \vdash
k':\tS.$
\end{itemize}
\end{proposition}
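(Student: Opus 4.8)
The plan is to prove both items by a \emph{simultaneous induction} on the structure of the term $t$ and the context $k$, reducing the statement to the four generating axioms $\epsilon_1$--$\epsilon_4$ of Figure~\ref{fig:equiv-rcl} (read as equivalences on $\llG$-expressions) applied at the root. Since $\equiv_{\llG}$ is the reflexive--symmetric--transitive congruence closure of these axioms, transitivity lets me treat a single equivalence step; symmetry is already built in, because each axiom is stated with $\equiv_{\llG}$ and therefore supplies both directions; and the congruence (an axiom applied strictly inside an expression) is exactly what the induction handles: I peel off the outermost constructor using the Generation Lemma (Lemma~\ref{prop:intGL-Gtz}), apply the induction hypothesis to the immediate subexpression at which the step occurs, and then reassemble the derivation with the very same typing rule and the same basis and type. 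Because the weakening and contraction operators on contexts are typed by $(Weak_k)$ and $(Cont_k)$ exactly as their term counterparts $(Weak_t)$ and $(Cont_t)$---merely threading the stoup $\tC$ through unchanged---parts (i) and (ii) are established together by the same case analysis.

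It then remains to verify the four axioms at the root. For each, I invert the typing of the left-hand side with Lemma~\ref{prop:intGL-Gtz} and rebuild a derivation of the right-hand side with an identical conclusion. Axiom $\epsilon_1$ (two weakenings commute) is immediate: $(Weak_t)$/$(Weak_k)$ only add the assumptions $x:\top$ and $y:\top$ to the basis, and a basis is a set, so their order is irrelevant. Axiom $\epsilon_2$ (the contraction is symmetric in its two bound variables) follows from commutativity of $\cap$, assumed in Subsection~\ref{subsec:rcl-inttypes}: the rule $(Cont_t)$ assigns the introduced variable the type $\tA\cap\tB$, and $\tA\cap\tB=\tB\cap\tA$, while $\Gamma$ and $\tS$ are untouched.

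The genuine work is in $\epsilon_3$ and $\epsilon_4$, where two nested contractions are rearranged. Consider $\epsilon_3$. Inverting the left-hand side, the inner contraction has a premise typing the two innermost bound variables with some $\tA$ and $\tB$, so the intermediate variable receives $\tA\cap\tB$; the outer contraction then combines this with the type $\tC$ of the remaining bound variable, so the variable introduced at the root is typed $(\tA\cap\tB)\cap\tC$. Inverting the right-hand side in the same way yields, up to the renaming dictated by the axiom, the type $(\tA\cap\tC)\cap\tB$. Associativity and commutativity of $\cap$ identify these two types, and the surrounding basis $\Gamma$ is unchanged, so the two derivations have the same conclusion. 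Axiom $\epsilon_4$, where the two contractions act on disjoint variables, is similar but easier: the two introduced intersection types are formed independently, and swapping the order of the contractions only permutes entries of the basis, which is harmless. The main obstacle is precisely this bookkeeping in $\epsilon_3$: one must track which premise supplies which type and confirm that the multiset of ``leaf'' types reaching the root variable is preserved, so that the final intersection types coincide modulo the idempotence, commutativity and associativity laws of $\cap$.
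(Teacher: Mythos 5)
Your proposal is correct and follows essentially the same route as the paper, whose proof is simply stated as ``by case analysis on the applied equivalence'': you reduce to the four axioms $\epsilon_1$--$\epsilon_4$ at the root via structural induction and the Generation Lemma, and discharge each axiom using the set nature of bases and the assumed associativity, commutativity and idempotence of $\cap$. The only difference is that you spell out the bookkeeping (notably for $\epsilon_3$) that the paper leaves implicit.
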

\begin{proof}
By case analysis on the applied equivalence.
\end{proof}

\begin{proposition}[Subject reduction for $\llG\cap$]
\label{prop:sr-sequent}
\rule{0in}{0in}
\begin{itemize}
\item[(i)] For every $\llG$-term $t$: if $\;\Gamma \vdash
t:\tS\;$ and $t \to t'$, then $\;\Gamma \vdash t':\tS.$
\item[(ii)] For every $\llG$-context $k$: if $\;\Gamma; \tA
\vdash k:\tS\;$ and $k \to k'$, then $\;\Gamma; \tA \vdash
k':\tS.$
\end{itemize}
\end{proposition}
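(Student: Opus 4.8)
The plan is to prove the two statements simultaneously by mutual induction on the structure of the term $t$ and the context $k$. Because the reduction relation is the contextual closure of the rules in Figure~\ref{fig:red-llG}, the induction on structure dispatches all the congruence cases (a redex firing inside a subterm or a subcontext), each of which is immediate from the induction hypothesis together with the syntax-directedness of the typing rules; what then remains is to verify, for every rule, that a single root reduction preserves the type. So the heart of the argument is a case analysis on the applied rule, and in each case the recipe is uniform: first invoke the Generation lemma~\ref{prop:intGL-Gtz} to peel the typing derivation of the redex into typings of its immediate subexpressions, then rebuild a derivation of the contractum from those pieces. Throughout, the domains of the bases are kept in step with the free variables by Lemma~\ref{prop:bases-weak}, and whenever a rule fires modulo the equivalences one appeals to Proposition~\ref{prop:subequiv-sequent}.

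Several groups of cases are routine. For $(\sigma)$, $T(\bindx v) \to v\isub{T}{x}$, parts (iii) and (iv) of Lemma~\ref{prop:intGL-Gtz} expose a typing of $v$ in which $x$ carries the intersection $\cap_i^n \tT_i$ collected from the several typings of $T$, and the Substitution lemma~\ref{prop:sub-lemma-llG}(i) then delivers exactly $\Gamma \vdash v\isub{T}{x}:\tS$. For $(\pi)$, $(tk)k' \to t(\append{k}{k'})$, the two nested cuts are decomposed by Lemma~\ref{prop:intGL-Gtz}(iii) and recombined through the Append lemma~\ref{prop:app-lemma}. For $(\mu)$, $\bindx xk \to k$, the variable $x$ is typed only by the axiom, so the intersection assigned to it in the stoup coincides with the stoup type demanded of $k$, the two bases agree by Lemma~\ref{prop:bases-weak}, and $\Gamma;\tA \vdash k:\tS$ reappears directly. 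The case $(\gamma\omega_2)$, $\cont{x}{x_1}{x_2}{(\weak{x_1}{e})} \to e\isub{x}{x_2}$, mirrors the corresponding case of Proposition~\ref{prop:sr} for $\rcl$: parts (v) and (vi) of Lemma~\ref{prop:intGL-Gtz} force the type of $x_1$ to be $\top$, so the contraction type $\tA\cap\tB$ collapses to $\tB$, and substituting the variable $x$ for $x_2$ preserves $\tS$ by the Substitution lemma. The entire $(\gamma)$ and $(\omega)$ families merely commute a $(Cont_\ast)$ or a $(Weak_\ast)$ rule past a structural rule; in each case Lemma~\ref{prop:intGL-Gtz} followed by a reapplication of the same two rules in the opposite order preserves the type, with no change to its logical content.

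I expect the delicate case to be $(\beta_\mathsf{g})$, $(\lambda x.t)(u::k) \to u(\bindx tk)$, which is the one genuine computational step and the one where the intersection bookkeeping is heaviest. Reading the redex as a cut, Lemma~\ref{prop:intGL-Gtz}(iii) produces several typings of the abstraction $\lambda x.t$ together with a typing of the cons $u::k$ whose stoup is the intersection of their types; part (i) turns each such type into an arrow and supplies a typing of $t$ under an assumption for $x$, while part (ii) splits the cons into typings of the head $u$ and a typing of the tail $k$ carrying the codomain intersection in its stoup. The obstacle is that the $(\to_L)$ rule assigns a \emph{doubly} intersected type $\cap_j^m(\cap_i^n \tS_i \to \tT_j)$ in which all arrows share one domain $\cap_i^n \tS_i$, so one must realign the domains coming from the various typings of $\lambda x.t$ with the intersection type under which $u$ is typed, and fuse the several environments $\gztop \sqcap \Gamma_1 \sqcap \ldots$ correctly. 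Once this alignment is made, the contractum is rebuilt from the inside out: the cut $tk$ is formed from the typings of $t$ and $k$, the selection $\bindx tk$ is obtained by $(Sel)$ reinstating $x$ with the stoup type $\cap_i^n \tS_i$, and a final outer cut against the typings of $u$ yields $\Gamma \vdash u(\bindx tk):\tS$ with the same base $\Gamma$ and type $\tS$. Managing this index juggling and checking that the merged bases agree on their common domain is the main technical effort; all the remaining cases follow the same decompose-and-reassemble pattern without surprises.
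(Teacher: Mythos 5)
Your proposal is correct and follows essentially the same route as the paper, which proves the statement by case analysis on the applied reduction, invoking the Substitution lemma~\ref{prop:sub-lemma-llG} for the $(\sigma)$ case and the Append lemma~\ref{prop:app-lemma} for the $(\pi)$ case, with the remaining cases handled by the Generation lemma and reassembly of the typing derivation. Your more detailed treatment of the $(\beta_\mathsf{g})$ and $(\mu)$ cases fills in bookkeeping the paper leaves implicit, but introduces no new idea beyond the paper's argument.
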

\begin{proof}
By case analysis on the applied reduction, using
Lemmas~\ref{prop:sub-lemma-llG} and \ref{prop:app-lemma} for the
cases of $(\sigma)$ and $(\pi)$ rule, respectively.
\end{proof}

\subsection{Typeability $\Rightarrow$ SN in $\llG \cap$}
\label{sec:typeSN_Gtz}

In this section, we prove the strong normalisation of the
$\llG$-calculus with intersection types. The termination is proved
by showing that the reduction on the set $\LlG\cup \LlGC$ of the
typeable $\llG$-expressions is included in a particular
well-founded relation, which we define as the lexicographic
product of three well-founded component relations. The first one
is based on the mapping of $\llG$-expressions into $\rcl$-terms.
We show that this mapping preserves types and that every
$\llG$-reduction can be
  simulated either by a $\rcl$-reduction or by an equality and each
  $\llG$-equivalence can be simulated by an $\rcl$-equivalence. The other
two well-founded orders are based on the introduction of quantities
designed to decrease a global measure associated with specific
$\llG$-expressions during the computation.

\begin{definition}
\label{def:mapping} The mapping
$\intt{\;\;}:\LlG\;\to\;\Rcl$ is
defined together with the auxiliary mapping
$\intc{\;\;}:\LlGC \; \to \;(\Rcl
\; \to \; \Rcl)$ in the following way:
$$
\begin{array}{lclclcl}
\intt{x} & = & x & & \intc{\bindx{t}}(M) & = & (\lambda x. \intt{t})M\\
\intt{\lambda x.t} & = & \lambda x. \intt{t} & & \intc{t::k}(M) & = & \intc{k}(M\intt{t})\\
\intt{\weak{x}{t}} & = & \weak{x}{\intt{t}} & & \intc{\weak{x}{k}}(M) & = & \weak{x}{\intc{k}(M)}\\
\intt{\cont {x}{y}{z}{t}} & = & \cont{x}{y}{z}{\intt{t}} & & \intc{\cont {x}{y}{z}{k}}(M) & = &\cont{x}{y}{z}{\intc{k}(M)} \\
\intt{tk} & = & \intc{k}(\intt{t})\\
\end{array}
$$
\end{definition}

\begin{lemma}
\rule{0in}{0in}
\begin{itemize}
\item[(i)]\label{lemma:pres-of-FV}
 $Fv(t) = Fv(\intt{t})$, for $t \in \LlG$.
\item[(ii)] \label{lemma:int-of-sub} $\intt{v\isub{t}{x}} =
\intt{v}\isub{\intt{t}}{x}$, for $v,t \in \LlG$.
\end{itemize}
\end{lemma}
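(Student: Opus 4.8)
The plan is to prove both parts by structural induction, but since the mapping $\intt{\;}$ on terms is defined (Definition~\ref{def:mapping}) by mutual recursion with the auxiliary mapping $\intc{\;}$ on contexts, in each case I would carry along a companion statement about $\intc{\;}$. For part (i) the companion claim is that for every context $k \in \LlGC$ and every $\rcl$-term $M$ sharing no variable with $k$ one has $Fv(\intc{k}(M)) = Fv(M) \cup Fv(k)$. Granting this, the term cases of (i) are immediate from the definition of $\intt{\;}$: the cases $x$, $\lambda x.t$, $\weak{x}{t}$ and $\cont{x}{y}{z}{t}$ merely push the free-variable computation through unchanged, while the cut case $\intt{tk} = \intc{k}(\intt{t})$ is settled by the companion claim together with the inductive hypothesis $Fv(t) = Fv(\intt{t})$. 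The companion claim itself is proved by induction on $k$: each of the four context constructors $\bindy t$, $t::k$, $\weak{x}{k}$, $\cont{x}{y}{z}{k}$ unfolds into an $\rcl$-term whose free variables are read off directly, using the well-formedness side conditions (e.g.\ $y,z \notin Fv(M)$ in the contraction case) to discard the spurious set operations.

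For part (ii) I would again run a mutual induction on the structure of $v$, now paired with two companion statements describing how $\intc{\;}$ interacts with the implicit substitution: (a) if $x \notin Fv(k)$ then $\intc{k}(M\isub{\intt{t}}{x}) = \intc{k}(M)\isub{\intt{t}}{x}$, i.e.\ a substitution confined to the argument slides out of the wrapper; and (b) if $x \in Fv(k)$ and $x \notin Fv(M)$ then $\intc{k\isub{t}{x}}(M) = \intc{k}(M)\isub{\intt{t}}{x}$, i.e.\ substituting into the context commutes with the mapping. Both (a) and (b) are established by induction on $k$, using the clauses of Figure~\ref{fig:sub-rcl} together with the sequent additions, and invoking part (i) to check the free-variable side conditions; note that the cons case $t::k$ of (b) already appeals to the term statement (so the induction is genuinely mutual) and to (a).

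With (a) and (b) in hand, the term cases of (ii) follow. The leaf and binder cases ($x$, $\lambda y.t$, the weakening cases, and the contraction case in which $x$ differs from the contracted variable) mirror the corresponding clauses of the substitution definition and use only the inductive hypothesis. The decisive case is the cut $v = fk$: the substitution definition splits it into $f\isub{t}{x}\,k$ when $x \in Fv(f)$ and into $f\,k\isub{t}{x}$ when $x \in Fv(k)$, and after unfolding $\intt{fk} = \intc{k}(\intt{f})$ these two subcases are precisely what companion statements (a) and (b), respectively, are designed to discharge, combined with the inductive hypothesis on $f$ or $k$. The other genuinely delicate case is the contraction clause in which $x$ is the contracted variable, $(\cont{x}{x_1}{x_2}{M})\isub{t}{x}$: here the substitution definition introduces the parallel substitution on two renamed copies $t_1,t_2$ of $t$, and I would close it by applying the inductive hypothesis twice and invoking part (i) to identify $Fv(\intt{t}) = Fv(t)$ (and likewise for the renamings), so that the contraction lists produced on the two sides coincide.

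The main obstacle I anticipate is bookkeeping rather than conceptual: arranging the mutual recursion between the term statement of (ii) and the two context companions (a)/(b) so that it is well-founded, and ensuring that the free-variable side conditions — which are exactly what select between the alternative $\gamma_2/\gamma_3$- and $\gamma_5/\gamma_6$-style clauses for cuts and for cons — are consistently discharged by means of part (i). The contraction-variable subcase of (ii) is where the argument is subtlest, precisely because it is the only place where the mapping must reconcile a parallel substitution with the explicit renaming of duplicated resources.
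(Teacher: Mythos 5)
The paper states this lemma without any proof, so there is nothing to compare your argument against; on its own terms your proposal is correct and is the natural way to discharge the omitted verification. The key organisational point — that the mutual recursion between $\intt{\;}$ and $\intc{\;}$ forces you to carry companion statements about contexts, namely $Fv(\intc{k}(M)) = Fv(M) \cup Fv(k)$ for (i) and the two commutation laws (a)/(b) for (ii), split according to whether the substituted variable lives in the argument $M$ or in the context $k$ — is exactly right, and it mirrors the linearity constraint $Fv(f)\cap Fv(k)=\emptyset$ that makes the cut case of the substitution definition split into two disjoint clauses. I checked the delicate cases: the cons case of (b) does genuinely need both the term statement and (a), as you note, and the contraction clause $(\cont{x}{x_1}{x_2}{u})\isub{t}{x}$ closes because part (i) gives $Fv(\intt{t})=Fv(t)$ (and likewise for the renamed copies), so the contraction lists $\cont{Fv(t)}{Fv(t_1)}{Fv(t_2)}{\cdot}$ produced on the two sides agree; the only caveat, which you implicitly assume, is that the fresh renamings $t_1,t_2$ and $\intt{t}_1,\intt{t}_2$ are chosen coherently, i.e.\ that renaming commutes with $\intt{\;}$ — a one-line observation worth recording explicitly so that (ii) holds as a literal equality rather than only up to $\equiv_{\rcl}$.
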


We prove that the mappings $\intt{\;\;}$ and $\intc{\;\;}$
preserve types. In the sequel, the notation $\LGvdash{`G}{\tS}$
stands for $\{M \; \mid \; M \in \Rcl\; \&\; `G \vdash_{\rcl}
M:\tS\}$.

\begin{proposition}[Type preservation by $\intt{\;\;}$]\label{prop:soundness}
\rule{0in}{0in}
\begin{itemize}
\item[(i)] If $\;`G \vdash t:\tS$, then $`G \vdash_{\rcl}
\intt{t}:\tS$.
\item[(ii)] If $\;`G;\cap^n_i\tT_i \vdash k:\tS$, then
$\intc{k}:\LRvdash{\Delta_j}{\tT_j} \to \LRvdash{`G, \Delta
}{\tS}$, for all $j \in \{0, \ldots, n\}$ and for some $\Delta=
\dztop \sqcap \Delta_1 \sqcap ... \sqcap \Delta_n$.
\end{itemize}
\end{proposition}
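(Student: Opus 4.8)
The plan is to prove (i) and (ii) simultaneously by mutual induction on the structure of the term $t$ and of the context $k$; since $\llG\cap$ is syntax-directed, this coincides with induction on the typing derivation, and at each step I would read off the last $\llG\cap$-rule using the Generation lemma (Lemma~\ref{prop:intGL-Gtz}). For (i), the four cases in which $t$ is a value, namely $t=x$, $t=\lambda x.u$, $t=\weak{x}{u}$ and $t=\cont{z}{x}{y}{u}$, are immediate, because $\intt{\;\;}$ is a homomorphism on these constructors: each of the rules $(Ax)$, $(\to_R)$, $(Weak_t)$, $(Cont_t)$ is matched, after applying the induction hypothesis to $u$, by the corresponding $\rcl\cap$-rule $(Ax)$, $(\to_I)$, $(Weak)$, $(Cont)$ of Figure~\ref{fig:typ-rcl-int}, each pair acting identically on the environment.

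The one genuinely interesting case of (i) is the cut $t=uk$, where $\intt{uk}=\intc{k}(\intt{u})$. Here I would invoke Lemma~\ref{prop:intGL-Gtz}(iii) to write $\Gamma={\Gamma'_0}^\top\sqcap\Gamma'_1\sqcap\ldots\sqcap\Gamma'_n,\Delta$ with $\Gamma'_j\vdash u:\tT_j$ for $j=0,\ldots,n$ and $\Delta;\cap_i^n\tT_i\vdash k:\tS$. By the induction hypothesis~(i), $\Gamma'_j\vdash_{\rcl}\intt{u}:\tT_j$ for every $j$, so $\intt{u}\in\LRvdash{\Gamma'_j}{\tT_j}$. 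By the induction hypothesis~(ii) applied to $k$, the map $\intc{k}$ sends a term lying in $\LRvdash{\Gamma'_j}{\tT_j}$ for all $j$ into $\LRvdash{\Delta,\,{\Gamma'_0}^\top\sqcap\Gamma'_1\sqcap\ldots\sqcap\Gamma'_n}{\tS}$. Feeding $\intt{u}$ through $\intc{k}$ therefore yields $\Delta,{\Gamma'_0}^\top\sqcap\ldots\sqcap\Gamma'_n\vdash_{\rcl}\intc{k}(\intt{u}):\tS$, and this environment is exactly $\Gamma$, closing the case.

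For (ii), the weakening and contraction cases $k=\weak{x}{k'}$ and $k=\cont{z}{x}{y}{k'}$ follow the same pattern: the induction hypothesis types $\intc{k'}(M)$, and then $(Weak)$, resp.\ $(Cont)$, wraps the result, mirroring $(Weak_k)$, resp.\ $(Cont_k)$. The selection case $k=\bindx u$ with stoup $\cap_i^n\tT_i$ uses $(Sel)$ to get $\Gamma,x:\cap_i^n\tT_i\vdash u:\tS$; the induction hypothesis~(i) and $(\to_I)$ then give $\Gamma\vdash_{\rcl}\lambda x.\intt{u}:\cap_i^n\tT_i\to\tS$, and for any $M$ with $\Delta_j\vdash_{\rcl}M:\tT_j$ ($j=0,\ldots,n$), rule $(\to_E)$ delivers $\Gamma,\dztop\sqcap\Delta_1\sqcap\ldots\sqcap\Delta_n\vdash_{\rcl}(\lambda x.\intt{u})M:\tS$, i.e.\ $\intc{\bindx u}(M)$ has the required type with $\Delta=\dztop\sqcap\Delta_1\sqcap\ldots\sqcap\Delta_n$.

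The crux is the cons case $k=u::k'$, where $\intc{u::k'}(M)=\intc{k'}(M\intt{u})$ and the stoup is $\cap_j^m(\cap_i^n\tS_i\to\tT_j)$. Using Lemma~\ref{prop:intGL-Gtz}(ii) I would split the environment of $u::k'$ into the part typing $u$ (yielding $\Gamma'_l\vdash u:\tS_l$, $l=0,\ldots,n$) and the part $\Delta''$ with $\Delta'';\cap_j^m\tT_j\vdash k':\tR$. Given any $M$ typed $\Theta_j\vdash_{\rcl}M:\cap_i^n\tS_i\to\tT_j$ for all $j$, the induction hypothesis~(i) on $u$ together with $(\to_E)$ gives $\Theta_j,{\Gamma'_0}^\top\sqcap\Gamma'_1\sqcap\ldots\sqcap\Gamma'_n\vdash_{\rcl}M\intt{u}:\tT_j$; feeding $M\intt{u}$ into $\intc{k'}$ via the induction hypothesis~(ii) then produces type $\tR$ under $\Delta''$ combined with the intersection of these environments. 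The hard part here is purely the bookkeeping of bases: I expect the main obstacle to be checking, via idempotency, commutativity and associativity of $\sqcap$, neutrality of the $\top$-environments, and the domain-correspondence lemmas (Lemmas~\ref{lem:dom-corr} and~\ref{prop:bases-weak}) to ensure all domains coincide wherever an intersection is formed, that the accumulated environment collapses to exactly $\Gamma,\Delta$ in the prescribed form $\dztop\sqcap\Delta_1\sqcap\ldots\sqcap\Delta_n$. Every other step is a direct rule-for-rule translation between the two calculi.
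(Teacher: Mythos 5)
Your proposal is correct and follows essentially the same route as the paper: a simultaneous induction on the typing derivation (equivalently, on the structure of terms and contexts, given syntax-directedness), with the value cases handled by rule-for-rule translation, the selection case via $(\to_I)$ followed by $(\to_E)$, the cut and cons cases by feeding $\intt{u}$ (resp.\ $M\intt{u}$) through the function given by the induction hypothesis for the context, and the remaining cases wrapped by $(Weak)$ and $(Cont)$. The basis bookkeeping you flag in the cons case is handled in the paper in exactly the way you anticipate, by identifying the accumulated intersected environments with the prescribed decomposition.
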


\begin{proof}
The proposition is proved by simultaneous induction on
derivations. We distinguish cases according to the last typing
rule used.
\begin{itemize}
\item Cases $(Ax)$, $(\to_R)$, $(Weak_t)$ and $(Cont_t)$ are easy,
because the intersection type assignment system of $\rcl$ has
exactly the same rules.
\item Case $(Sel)$: the derivation ends
with the rule
$$
\infer[(Sel)]{`G; \tA \vdash \bindx t:\tS}
                    {`G, x:\tA \vdash t:\tS}
$$
By IH we have that $`G, x:\tA \vdash_{\rcl} \intt{t}:\tS$, where
$\tA=\cap_i^n \tT_i$. For any $M \in \Rcl$ such that $\Delta_j
\vdash_{\rcl} M:\tT_i$, for all $j \in \{0,\ldots,n\}$, we have
\begin{center}
\[ \prooftree
    \prooftree
        `G, x:\cap_i^n \tT_i \vdash_{\rcl} \intt{t}:\tS
        \justifies
        `G \vdash_{\rcl} \lambda x.\intt{t}:\cap_i^n \tT_i \to \tS
    \using{(\to_I)}
    \endprooftree
    \Delta_0 \vdash_{\rcl} M:\tT_0 \;\ldots\;\Delta_n \vdash_{\rcl} M:\tT_n
   \justifies
   `G, \dztop \sqcap \Delta_1 \sqcap \ldots \sqcap \Delta_n \vdash_{\rcl}(\lambda x.\intt{t})M:\tS
   \using{(\to_E)}
\endprooftree
\]
\end{center}
Since $(\lambda x.\intt{t})M=\intc{\bindx t}(M)$, we conclude that
$\intc{\bindx t}:\LGvdash{\Delta_j}{\tT_j} \to \LGvdash{`G, \dztop
\sqcap \Delta_1 \sqcap \ldots \sqcap \Delta_n}{\tS}$.
\item Case $(\to_L)$: the derivation ends with the rule
$$
\infer[(\to_L)]{ `G, \Delta; \cap^m_j (\cap^n_i\tS_i \to \tT_j)
\vdash t::k:\tR} {`G_0 \vdash t:\tS_0\;...\;`G_n \vdash t:\tS_n &
\Delta;\cap^m_j\tT_j \vdash k:\tR}
$$
for $`G=\gztop \sqcap `G_1 \sqcap \ldots \sqcap `G_n$. By IH we
have that $`G_l \vdash_{\rcl} \intt{t}:\tS_l$, for $l \in
\{0,\ldots,n\}$. For any $M \in \Rcl$ such that $`G'_j
\vdash_{\Rcl} M:\cap_{i}^{n}\tS_i \to \tT_j$, $j = 1, \ldots, m$
we have
$$
\infer[(\to_E)]{\gztop \sqcap `G_1 \sqcap \ldots \sqcap `G_n,`G'_j
\vdash_{\rcl} M \intt{t}:\tT_j}
                    {`G'_j \vdash_{\rcl} M:\cap_i^{n}\tS_i \to \tT_j \quad `G_0 \vdash_{\rcl} \intt{t}:\tS_0
                    \;\ldots\;`G_n \vdash_{\rcl} \intt{t}:\tS_n}
$$
From the right-hand side premise in the $(\to_L)$ rule, by IH, we
get that $\intc{k}$ is the function with the scope
$\intc{k}:\LRvdash{`G'''_j}{\tT_j} \to \LRvdash{`G''',
`G''}{\tR}$, for some $`G'''={\Gamma'''_0}^\top \sqcap `G'''_1
\sqcap ... \sqcap `G'''_n$. For $`G'''\equiv `G,`G'$ and by taking
$M\intt{t}$ as the argument of the function $\intc{k}$, we get
$`G, \Delta, `G'\vdash_{\rcl} \intc{k}(M\intt{t}):\tR$. Since
$\intc{k}(M\intt{t})=\intc{t::k}(M)$, we have that $`G, \Delta,
`G'\vdash_{\rcl} \intc{t::k}(M):\tR$. This holds for any
$M$ of the appropriate type, yielding\\
$\intc{t::k}:\LRvdash{`G'}{\cap^n_i\tS_i \to \tT_j} \to
\LRvdash{`G ,\Delta, `G'}{\tR}$, which is exactly what we need.
Case $(Cut)$: the derivation ends with the rule
$$
\infer[(Cut)]{\gztop \sqcap `G_1 \sqcap \ldots \sqcap `G_{n},
\Delta \vdash tk:\tS} {`G_{0} \vdash t:\tT_0 \ldots `G_{n} \vdash
t:\tT_n & \Delta; \cap \tT_i^{n} \vdash k:\tS}
$$
By IH we have that $`G_j \vdash_{\rcl} \intt{t}:\tT_j$ and
$\intc{k}:\LRvdash{`G_j'}{\tT_j} \to \LRvdash{`G', \Delta}{\tS}$
for all $j =0, \ldots, n$ and for $`G'=\gztop \sqcap `G'_1 \sqcap
\ldots \sqcap `G'_n$. Hence, for any $M \in \Rcl$ such that $`G_j'
\vdash_{\rcl} M:\tT_j$, $`G', \Delta \vdash_{\rcl}
\intc{k}(M):\tS$ holds. By taking $M \equiv \intt{t}$ and $`G'
\equiv `G$, we get $`G, \Delta \vdash_{\rcl}
\intc{k}(\intt{t}):\tS$. But $\intc{k}(\intt{t})=\intt{tk}$, so
the proof is done.
\item Case $(Weak_k)$: the derivation ends with the rule
$$
\infer[(Weak_k)]{`G, x:\top; \tB \vdash \weak{x}{k}:\tS}
                    {`G; \tB \vdash k:\tS}
$$
By IH we have that $\intc{k}$ is the function with the scope
$\intc{k}:\LRvdash{`G'_j}{\tT_j} \to \LRvdash{`G,{`G'_0}^\top
\sqcap `G'_1\sqcap \ldots \sqcap `G'_n }{ \tS}$, meaning that for
each $M \in \Rcl$ such that $`G'_j \vdash_{\rcl} M:\tT_j$ for all
$j \in \{0,\ldots,n\}$ holds ${`G'_0}^\top \sqcap `G'_1\sqcap
\ldots \sqcap `G'_n,`G \vdash_{\rcl} \intc{k}(M):\tS$. Now, we can
apply $(Weak)$ rule:
$$
\infer[(Weak)]{`G,{`G'_0}^\top \sqcap `G'_1\sqcap \ldots \sqcap
`G'_n, x:\top \vdash \weak{x}{\intc{k}(M)}:\tS}
                    {`G,{`G'_0}^\top
\sqcap `G'_1\sqcap \ldots \sqcap `G'_n \vdash \intc{k}(M):\tS}
$$
Since $\weak{x}{\intc{k}(M)}= \intc{\weak{x}{k}}(M)$, this means
that $\intc{\weak{x}{k}}:\LRvdash{`G'_j}{\tT_j} \to \LRvdash{`G,
{`G'_0}^\top \sqcap `G'_1\sqcap \ldots \sqcap `G'_n,x:\top}{\tS}$,
which is exactly what we wanted to get.
\item Case $(Cont_k)$: similar to the case $(Weak_k)$, relying on
the rule $(Cont)$ in $\rcl$.
\end{itemize}
\end{proof}

For the given encoding $\intt{\;\;}$, we show that each
$\llG$-reduction step can be simulated by an $\rcl$-reduction or
by an equality. In order to do so, we prove the following lemmas.
The proofs of Lemma~\ref{lemma:pi2-for-contexts} and
Lemma~\ref{lemma:int-of-append}, according to
\cite{espighilivet07}, use Regnier's $\sigma$ reductions,
investigated in \cite{regn94}.

$$\begin{array}{rcl}
((\lambda x. M)N)P & \to & (\lambda x. (MN)) P \; \;  x \notin P \\
(\lambda x y. M)N & \to & \lambda y.((\lambda x.M)N) \; \; y \notin N \\
M ((\lambda x.P)N) & \to & (\lambda x. MP)N \; \;  x \notin M
\end{array}
$$

\begin{lemma}\label{lemma:context-closure}
If $M \to_{\rcl} M'$, then $\intc{k}(M) \to_{\rcl} \intc{k}(M').$
\end{lemma}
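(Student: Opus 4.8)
The plan is to argue by structural induction on the context $k \in \LlGC$, exploiting the fact that $\to_{\rcl}$ is \emph{compatible} with every term-forming operator of the $\rcl$-calculus, i.e.\ it is closed under arbitrary term contexts (this is precisely the stability by context already invoked in the proof of Proposition~\ref{prop:sr}). The statement is to be read with the argument term universally quantified: for a fixed $k$ and for all $M, M' \in \Rcl$ with $M \to_{\rcl} M'$, one has $\intc{k}(M) \to_{\rcl} \intc{k}(M')$. This is exactly the form of inductive hypothesis available for the immediate subcontexts of $k$, since each $\intc{k'}(-)$ is a function $\Rcl \to \Rcl$.

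For the base case $k = \bindx t$, Definition~\ref{def:mapping} gives $\intc{k}(M) = (\lambda x.\intt{t})M$, so $\intc{k}(M)$ and $\intc{k}(M')$ differ only in the right component of an application; compatibility of $\to_{\rcl}$ with application in the argument position yields $(\lambda x.\intt{t})M \to_{\rcl} (\lambda x.\intt{t})M' = \intc{k}(M')$. For $k = t::k'$ we have $\intc{k}(M) = \intc{k'}(M\intt{t})$; since $M \to_{\rcl} M'$ entails $M\intt{t} \to_{\rcl} M'\intt{t}$ (compatibility with application in the function position), the inductive hypothesis for $k'$ applied to this step gives $\intc{k'}(M\intt{t}) \to_{\rcl} \intc{k'}(M'\intt{t}) = \intc{k}(M')$. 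The two remaining cases are uniform: for $k = \weak{x}{k'}$ we have $\intc{k}(M) = \weak{x}{\intc{k'}(M)}$, and the hypothesis $\intc{k'}(M) \to_{\rcl} \intc{k'}(M')$ together with compatibility with the weakening operator gives the claim; likewise for $k = \cont{x}{y}{z}{k'}$, where $\intc{k}(M) = \cont{x}{y}{z}{\intc{k'}(M)}$ and we use compatibility with contraction.

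The only point requiring care is that every intermediate expression remains a well-formed $\rcl$-term, so that the side conditions attached to weakening, contraction and application are not violated by passing from $M$ to $M'$. This is guaranteed by Proposition~\ref{prop:pres-of-FV}: since $M \to_{\rcl} M'$ implies $Fv(M) = Fv(M')$, every constraint of the form $x \notin Fv(\cdots)$ or $Fv(\cdot) \cap Fv(\cdot) = \emptyset$ that holds for $\intc{k}(M)$ holds verbatim for $\intc{k}(M')$. Thus there is no genuine obstacle beyond bookkeeping: the proof is essentially the observation that, for each fixed $k$, the map $\intc{k}(-)$ is an $\rcl$-term context and that $\to_{\rcl}$ is a congruence.
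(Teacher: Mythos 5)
Your argument is correct, and it is the natural one: the paper states Lemma~\ref{lemma:context-closure} without proof, and what you give—structural induction on $k$ together with the fact that $\to_{\rcl}$ is the compatible (context) closure of the rules in Figure~\ref{fig:red-rcl}, which the paper itself invokes as ``stability by context'' in the proof of Proposition~\ref{prop:sr}—is exactly what is needed. You also correctly identify the two points that require a remark: the inductive hypothesis must be quantified over all arguments so that the case $k = t::k'$ can be handled via the reduction $M\intt{t}\to_{\rcl} M'\intt{t}$, and well-formedness of $\intc{k}(M')$ follows from Proposition~\ref{prop:pres-of-FV} since all side conditions are stated in terms of free variables, which reduction preserves.
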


\begin{lemma}\label{lemma:pi2-for-contexts}
$\intc{k}((\lambda x.P)N) \to_{\rcl} (\lambda x.\intc{k}(P))N.$
\end{lemma}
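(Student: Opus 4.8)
The plan is to prove the identity by structural induction on the context $k$, reading each clause of the definition of $\intc{\;\;}$ as a single rewriting governed by one of the three Regnier $\sigma$-reductions recalled just above (available here as $\rcl$-reductions), and combining it with Lemma~\ref{lemma:context-closure} and the induction hypothesis. The two engines are the third $\sigma$-rule $M((\lambda x.P)N)\to(\lambda x.MP)N$ and the first one $((\lambda x.M)N)Q\to(\lambda x.(MQ))N$, both under the side condition $x\notin M$, resp.\ $x\notin Q$, which holds by Barendregt's convention throughout.

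For the base case $k=\bindy t$ we have $\intc{k}(M)=(\lambda y.\intt{t})M$, so the two sides of the statement are $(\lambda y.\intt{t})((\lambda x.P)N)$ and $(\lambda x.(\lambda y.\intt{t})P)N$; these are exactly the left- and right-hand sides of the third $\sigma$-rule with $M=\lambda y.\intt{t}$, and a single step closes the case. For $k=t::k'$, where $\intc{k}(M)=\intc{k'}(M\intt{t})$, I would first rewrite the inner subterm by the first $\sigma$-rule, $((\lambda x.P)N)\intt{t}\to(\lambda x.(P\intt{t}))N$, propagate this step under $\intc{k'}$ using Lemma~\ref{lemma:context-closure}, and then apply the induction hypothesis to $k'$ with $P\intt{t}$ playing the role of $P$; since $(\lambda x.\intc{k'}(P\intt{t}))N=(\lambda x.\intc{k}(P))N$, this finishes the case.

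The contraction case $k=\cont{y}{y_1}{y_2}{k'}$ is also a genuine forward reduction: applying the induction hypothesis under the contraction yields $\cont{y}{y_1}{y_2}{((\lambda x.\intc{k'}(P))N)}$, and then the contraction can be driven inward, first past the application by $\gamma_2$ (whose side condition $y_1,y_2\notin Fv(N)$ holds because $y_1,y_2$ are bound by the context $k'$ and are therefore disjoint from $Fv(N)$) and then into the abstraction by $\gamma_1$, producing precisely $(\lambda x.\cont{y}{y_1}{y_2}{\intc{k'}(P)})N=(\lambda x.\intc{k}(P))N$.

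The weakening case $k=\weak{y}{k'}$, with $\intc{k}(M)=\weak{y}{\intc{k'}(M)}$, is the step I expect to be the main obstacle. Applying the induction hypothesis under the weakening gives $\weak{y}{((\lambda x.\intc{k'}(P))N)}$, whereas the right-hand side asks for the weakening to sit under the abstraction, as in $(\lambda x.\weak{y}{\intc{k'}(P)})N$. The difficulty is that the weakening rules $\omega_1$ and $\omega_2$ only extrude $\weak{y}$ outward and never inward, so this is the one case where the required repositioning does not follow from a naive congruence step under $\intc{k}$; it has to be obtained by commuting $\weak{y}$ past both the abstraction and the application using $\omega_1$, $\omega_2$ and the equivalences $\equiv_{\rcl}$, and reconciling this commutation with the stated direction of $\to_{\rcl}$ is exactly where the argument needs the most care.
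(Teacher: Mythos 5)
Your overall route is exactly the intended one: the paper omits the proof of this lemma and merely records, just before its statement, that (following \cite{espighilivet07}) it is proved using Regnier's $\sigma$-reductions, and your induction on $k$ --- the third $\sigma$-rule for the base case $\bindx t$, the first $\sigma$-rule plus Lemma~\ref{lemma:context-closure} and the induction hypothesis for $t::k'$, and $\gamma_2$ followed by $\gamma_1$ for the contraction case --- is precisely that argument. (Two side remarks: the paper's displayed first $\sigma$-rule is garbled, and should read $((\lambda x.M)N)P \to (\lambda x.(MP))N$ for $x\notin Fv(P)$; you silently used the correct form. Also, in the contraction case $y_1,y_2$ are \emph{free} in $k'$ and bound by the contraction of $k$; the needed side condition $y_1,y_2\notin Fv(N)$ follows from the disjointness of $Fv(k')$ and $Fv((\lambda x.P)N)$ imposed by well-formedness, as you in effect argue. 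Finally, note that the $\sigma$-rules are not among the rules of Figure~\ref{fig:red-rcl}, so even your correct cases require reading $\to_{\rcl}$ as extended with $\sigma$; the paper's remark is what licenses this.)

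Your diagnosis of the weakening case is also correct, and the difficulty you flag is not one that more care can remove: the case is genuinely unprovable in the stated direction. After the induction hypothesis you hold $\weak{y}{((\lambda x.\intc{k'}(P))N)}$ and need $(\lambda x.\weak{y}{\intc{k'}(P)})N$, but every $\omega$-rule of Figure~\ref{fig:red-rcl} extrudes weakening outward, no equivalence of Figure~\ref{fig:equiv-rcl} moves a weakening under a $\lambda$, and $\gamma\omega_2$ cannot fire since $y$ is free here; hence every reduct of the left term keeps $\weak{y}$ at the head, while the target places it under the binder. The two sides are merely joinable: the \emph{right}-hand side reduces by $\omega_1$ then $\omega_2$ to your term, i.e.\ the lemma holds in this case only up to $\omega$-expansion. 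The statement was inherited verbatim from the $\lG$ setting of \cite{espighilivet07}, where contexts are only $\bindx t$ and $t::k$ and the problematic case does not arise; with resource contexts it needs restating (e.g.\ as joinability modulo $\omega$-extrusion, or with the weakening case split off), together with a corresponding adjustment where it is consumed (Lemma~\ref{lemma:int-of-append} and the simulation of $\pi$ in Theorem~\ref{prop:simulation}), where the residual $\omega$-expansions must be absorbed by the termination measures much as $\omega_6$ is handled by $\wnorm{\;}$. So: three of your four cases reconstruct the intended proof, but the weakening case is a real gap --- in the paper's claim as much as in your attempt --- and no forward $\rcl$-reduction argument will close it.
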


\begin{lemma}\label{lemma:int-of-append}
If $M \in \Lambda^{\rc}$ and $k,k' \in\LlGC$, then
$\intc{k'}\circ\intc{k}(M) \to_{\rcl} \intc{\app{k}{k'}}(M).$
\end{lemma}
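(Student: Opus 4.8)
The plan is to argue by induction on the structure of the first context $k$, turning the append identity into a small collection of commutation properties of the encoding $\intc{-}$ with the outermost constructor of $k$. Throughout I would freely use Lemma~\ref{lemma:context-closure}, which rewrites underneath $\intc{k'}$ (if $N \to_{\rcl} N'$ then $\intc{k'}(N) \to_{\rcl} \intc{k'}(N')$), so that once the body has been put in the right shape the induction hypothesis can be applied inside $\intc{k'}$.

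Two of the four cases are immediate. If $k = \bindx t$, unfolding the definitions gives $\intc{k'}(\intc{k}(M)) = \intc{k'}((\lambda x.\intt{t})M)$ on the left and $\intc{\app{k}{k'}}(M) = (\lambda x.\intc{k'}(\intt{t}))M$ on the right, so the goal is exactly the instance $\intc{k'}((\lambda x.\intt{t})M) \to_{\rcl} (\lambda x.\intc{k'}(\intt{t}))M$ of Lemma~\ref{lemma:pi2-for-contexts}. If $k = t::k_0$, both sides unfold to an application of $\intc{-}$ to the argument $M\intt{t}$: the left becomes $\intc{k'}(\intc{k_0}(M\intt{t}))$ and the right becomes $\intc{\app{k_0}{k'}}(M\intt{t})$, so the induction hypothesis for $k_0$ (with $M\intt{t}$ in place of $M$) closes the case.

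For $k = \weak{x}{k_0}$ the two sides are $\intc{k'}(\weak{x}{\intc{k_0}(M)})$ and $\weak{x}{\intc{\app{k_0}{k'}}(M)}$. Here I would first establish the auxiliary commutation $\intc{k'}(\weak{x}{N}) \to^* \weak{x}{\intc{k'}(N)}$ by a side induction on $k'$: the selection and cons steps extract the weakening past the newly created application via $\omega_3$ and $\omega_2$, the weakening step uses the equivalence $\epsilon_1$, and the contraction step uses $\gamma\omega_1$. Applying this with $N = \intc{k_0}(M)$ and then rewriting inside the weakening by Lemma~\ref{lemma:context-closure} together with the induction hypothesis finishes the case. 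What makes this case smooth is that weakening extraction is always a legal $\rcl$-reduction.

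The contraction case $k = \cont{x}{y}{z}{k_0}$ is the main obstacle, and it is precisely where Regnier's $\sigma$-reductions (displayed before the lemma) are needed. The two sides are $\intc{k'}(\cont{x}{y}{z}{\intc{k_0}(M)})$ and $\cont{x}{y}{z}{\intc{\app{k_0}{k'}}(M)}$, so one would like the dual commutation $\intc{k'}(\cont{x}{y}{z}{N}) \to^* \cont{x}{y}{z}{\intc{k'}(N)}$. The difficulty is that this runs against the orientation of the calculus: the rules $\gamma_1,\gamma_2,\gamma_3$ push a contraction \emph{inward} through abstractions and applications, whereas pulling it \emph{outward} is not an $\rcl$-reduction, so the naive commutation is unavailable. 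The way out is to push the contraction inward on both sides instead, using $\gamma_1,\gamma_2,\gamma_3$ to drive it down to the variables it binds, and then to absorb the residual permutation of $\lambda$-redexes created by $\intc{k'}$ with a $\sigma$-reduction, exactly as in Lemma~\ref{lemma:pi2-for-contexts}; the two sides thereby meet on a common $\rcl$-term, so the identity holds in the $\sigma$-reduction setting in which we work, following~\cite{espighilivet07,regn94}. I would carry out this reconciliation by a side induction on $k'$ that interleaves the $\gamma$-steps, Lemma~\ref{lemma:context-closure}, the induction hypothesis for $k_0$, and the $\sigma$-step. The delicate bookkeeping is to keep the variable conditions of $\gamma_2/\gamma_3$ and of the $\sigma$-reductions aligned; these hold because $y,z$ are bound inside $\intc{k_0}(M)$ and are therefore absent from the material contributed by $k'$.
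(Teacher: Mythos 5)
Your induction on the structure of $k$ is the right skeleton, and it is essentially the only proof the paper could have in mind (the paper omits the argument entirely, offering only the remark that Regnier's $\sigma$-reductions are used). The selection case is indeed exactly Lemma~\ref{lemma:pi2-for-contexts}, the cons case is a clean instance of the induction hypothesis at $M\intt{t}$, and the weakening case works because the $\omega$-rules are oriented outward, so the commutation $\intc{k'}(\weak{x}{N}) \to^{*} \weak{x}{\intc{k'}(N)}$ (which is part (iii) of the paper's auxiliary lemma) really is a directed $\rcl$-reduction.

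The gap is in the contraction case, and you have only half-diagnosed it. The statement demands $\intc{k'}(\cont{x}{y}{z}{\intc{k_0}(M)}) \to_{\rcl} \cont{x}{y}{z}{\intc{\app{k_0}{k'}}(M)}$: the contraction sits strictly inside the term built by $\intc{k'}$ on the left and outermost on the right. Every rule of Figure~\ref{fig:red-rcl} that touches a contraction ($\gamma_1$, $\gamma_2$, $\gamma_3$, $\gamma\omega_1$) pushes it \emph{inward}, the equivalences of Figure~\ref{fig:equiv-rcl} only commute contractions with contractions, and --- crucially --- Regnier's $\sigma$-rules permute $\beta$-redexes and never move a contraction at all. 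So your appeal to $\sigma$ does not repair this case; $\sigma$ is what the selection and cons cases need (via Lemma~\ref{lemma:pi2-for-contexts}), not this one. What your argument actually delivers here is that the two sides have a common reduct: both are reducts of the peak $\cont{x}{y}{z}{\intc{k'}(\intc{k_0}(M))}$, one via the inward $\gamma$-commutation, the other via the induction hypothesis under the contraction. That is strictly weaker than the directed reduction claimed, and the difference matters downstream: Theorem~\ref{prop:simulation} and Proposition~\ref{prop:inclusion} need the $\pi$-step to be simulated by $\to^{+}_{\rcl}$ (or an equality) so that $>_{\rcl}$ decreases, and mere joinability would not feed the lexicographic termination argument. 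To close the gap one must either enlarge $\equiv_{\rcl}$ with contraction-extraction over applications and abstractions, or restate the lemma (and track its uses) up to that congruence; your write-up should say explicitly which of these it is doing rather than attributing the fix to $\sigma$.
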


\begin{lemma}
\rule{0in}{0in}
\begin{itemize}
\item[(i)] \label{lemma:prop-of-isub} If $x \notin Fv(k)$,  then
$(\intc{k}(M))\isub{N}{x} = \intc{k}(M\isub{N}{x}).$
\item[(ii)]\label{lemma:prop-of-cont} If $x,y \notin Fv(k)$,  then
$\cont{z}{x}{y}{(\intc{k}(M))} \to_{\rcl}
\intc{k}(\cont{z}{x}{y}{M}).$
\item[(iii)]\label{lemma:push-of-weak} $\intc{k}(\weak{x}{M})
\to_{\rcl} \weak{x}{\intc{k}(M)}.$
\end{itemize}
\end{lemma}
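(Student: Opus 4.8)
The plan is to prove each of the three items by structural induction on the context $k$, whose four shapes $\bindu t$, $t::k'$, $\weak{w}{k'}$ and $\cont{w}{w_1}{w_2}{k'}$ drive the four clauses of the definition of $\intc{\;\;}$. In each induction the selection case $\intc{\bindu t}(M)=(\lambda u.\intt t)M$ is the real base: since it produces an ordinary $\rcl$-application, the desired identity or reduction follows from a single $\rcl$-rule, while the remaining three shapes push the operation one layer inward and invoke the induction hypothesis on $k'$. Throughout I will use the identity $Fv(t)=Fv(\intt t)$ to discharge the side conditions, and the Barendregt convention to guarantee that the bound variables of $k$ are distinct from $x,y,z$.

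For (i), since $x\notin Fv(k)$ the substitution $\isub{N}{x}$ meets neither a binder nor a free occurrence originating in $k$, so it slides untouched through every wrapper. In the selection case I use the application clause of Figure~\ref{fig:sub-rcl} together with $x\notin Fv(\lambda u.\intt t)$ to obtain $((\lambda u.\intt t)M)\isub{N}{x}=(\lambda u.\intt t)(M\isub{N}{x})$. The cons case first rewrites $(M\intt t)\isub{N}{x}=(M\isub{N}{x})\intt t$ (again because $x\notin Fv(t)$) and then applies the hypothesis to $k'$; the weakening and contraction cases use the corresponding substitution clauses (valid because $x$ differs from the wrapper's variables) followed by the hypothesis. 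This item is a genuine equality and needs no reduction.

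For (ii) and (iii) the equality becomes a reduction, so each inductive step couples the hypothesis with exactly one structural rule, propagated to the correct position either by context closure (Lemma~\ref{lemma:context-closure}) for the selection and cons wrappers, or by plain compatibility of $\to_{\rcl}$ under the weakening and contraction wrappers; the target is thus reached by a sequence of $\rcl$-reductions and equivalences in the sense of $\rightarrow^{*}$. In (ii) the selection case fires $\gamma_3$ (legal since $x,y\notin Fv(\intt t)$), the cons case applies the hypothesis and then $\gamma_2$ inside $\intc{k'}$, and the weakening case fires $\gamma\omega_1$ before the hypothesis. In (iii) the selection and cons cases fire $\omega_3$ and $\omega_2$ respectively, the weakening case needs the equivalence $\epsilon_1$ to commute the two weakenings after the hypothesis, and the contraction case fires $\gamma\omega_1$ after the hypothesis.

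The one genuinely delicate step is the contraction case of (ii): here the outer contraction $\cont{z}{x}{y}{\cdot}$ is blocked by the inner $\cont{w}{w_1}{w_2}{\cdot}$ coming from $\intc{k}$, so the two must first be permuted by the equivalence $\epsilon_4$ before the hypothesis can reach $\intc{k'}$. I expect the main obstacle to be checking the side conditions of $\epsilon_4$, namely $w\neq x,y$ and $z\neq w_1,w_2$; the former follows from $x,y\notin Fv(\cont{w}{w_1}{w_2}{k'})$ and the latter from the Barendregt convention. Once the contractions are swapped, the hypothesis applies under the outer contraction by context closure. No mutual dependence among the three items is required: each is a self-contained induction on $k$.
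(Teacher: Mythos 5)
The paper states this lemma without any proof, so there is nothing to compare against; your structural induction on $k$, with the selection case $\intc{\bindu t}(M)=(\lambda u.\intt{t})M$ as the base discharged by a single substitution clause or reduction rule ($\gamma_3$, $\omega_3$) and the other three shapes handled by the induction hypothesis plus one propagation rule or equivalence ($\gamma_2$/$\omega_2$ under $\intc{k'}$, $\epsilon_1$, $\epsilon_4$, $\gamma\omega_1$), is exactly the argument the authors evidently intend, and your side-condition checks via $Fv(t)=Fv(\intt{t})$ and the Barendregt convention are correct. The only caveat worth recording is that, as you observe, items (ii) and (iii) are not single-step reductions but hold for $\rightarrow^{*}$, i.e.\ the reflexive--transitive closure of $\rightarrow_{\rcl}\cup\equiv_{\rcl}$, which is how the lemma is in fact used in Theorem~\ref{prop:simulation} and Definition~\ref{def:ord}.
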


Now we can prove that the reduction rules of $\llG$ can be
simulated by the reduction rules or an equality in the $\rcl$-calculus.
Moreover, the equivalences of $\llG$-calculus are preserved in
$\rcl$-calculus.

\begin{theorem}[Simulation of \ensuremath{\gcw}-reduction by
\ensuremath{\rcl}-reduction]
\label{prop:simulation}
\rule{0in}{0in}
\begin{itemize}
\item[(i)] If a term $t \to_{\llG} t'$, then $\intt{t} \to_{\rcl}
\intt{t'}$.
\item[(ii)] If a context $k \to_{\llG} k'$ by $\gamma_6$ or
$\omega_6$ reduction, then $\intc{k}(M) \equiv \intc{k'}(M)$, for
any $M \in \Lambda^{\rc}$.
\item[(iii)] If a context $k \to_{\llG} k'$ by some other
reduction, then $\intc{k}(M) \to_{\rcl} \intc{k'}(M)$, for any $M
\in \Lambda^{\rc}$.
\item[(iv)] If $t \equiv_{\llG} t'$, then $\intt{t} \equiv_{\rcl}
\intt{t'}$, and if $k \equiv_{\llG} k'$, then $\intc{k}(M)
\equiv_{\rcl} \intc{k'}(M)$, for any $M \in \Rcl$.
\end{itemize}
\end{theorem}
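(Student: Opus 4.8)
The plan is to establish all four items by induction on the derivation of the one-step relation $\to_{\llG}$ (resp.\ $\equiv_{\llG}$), performed simultaneously for terms and contexts, with a case analysis on the rule applied. The inductive (congruence) cases are cheap: since $\intt{\;}$ and $\intc{\;}$ are defined compositionally on every constructor, a reduction occurring strictly inside a subexpression is handled by the induction hypothesis, while a reduction occurring in the argument position $M$ of $\intc{k}(M)$ is transported to the whole by Lemma~\ref{lemma:context-closure}. Consequently the real content lies in the \emph{root} cases, where I apply the encodings to both sides of a rule, unfold them by Definition~\ref{def:mapping}, and close the residual gap with one of the commutation lemmas stated above.

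For the first group the root computations are as follows. For $(\sigma)$, $\intt{T(\bindx v)}=(\lambda x.\intt{v})\intt{T}\to_{\beta}\intt{v}\isub{\intt{T}}{x}=\intt{v\isub{T}{x}}$, using the substitution compatibility of the encoding (Lemma~\ref{lemma:int-of-sub}); $(\mu)$ is one $\beta$-step followed by Lemma~\ref{lemma:prop-of-isub}; $(\beta_\mathsf{g})$ rewrites $\intc{k}((\lambda x.\intt{t})\intt{u})$ to $(\lambda x.\intc{k}(\intt{t}))\intt{u}$ by Lemma~\ref{lemma:pi2-for-contexts}; and $(\pi)$ is exactly Lemma~\ref{lemma:int-of-append}. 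The $\gamma$- and $\omega$-rules acting at the term level split three ways: $(\gamma_1)$, $(\omega_1)$, $(\gamma\omega_1)$, $(\gamma\omega_2)$ map verbatim onto the homonymous $\rcl$-rules; $(\gamma_2)$ and $(\omega_2)$ reduce to the pushing properties (Lemma~\ref{lemma:prop-of-cont}, items (ii) and (iii)); and $(\gamma_3)$, $(\omega_3)$ turn into \emph{syntactic identities} $\intt{t}=\intt{t'}$ once both sides are encoded, because $\intc{\cont{x}{y}{z}{k}}$ and $\intc{\weak{x}{k}}$ are defined to push the operator outward. These last two are the degenerate (reflexive) instances of $\to_{\rcl}$ in part~(i).

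The context-level rules (parts (ii) and (iii)) follow a uniform pattern: push the outer contraction or weakening through the context by Lemma~\ref{lemma:prop-of-cont}, items (ii)/(iii), fire the matching $\rcl$-step inside the argument, and re-lift it by Lemma~\ref{lemma:context-closure}. The cases $(\gamma_6)$ and $(\omega_6)$ are precisely those where both encodings coincide, yielding only an equality $\equiv$, which is the distinction drawn in part~(ii); the rules $(\gamma_4),(\gamma_5),(\omega_4),(\omega_5)$ each require two $\rcl$-steps (for instance an $\omega_1$ followed by an $\omega_2$ for $(\omega_4)$). Throughout I must check that the freshness conditions forced by well-formedness of $\llG$-expressions (e.g.\ $x_1,x_2\in Fv(t)$ in $(\gamma_4),(\gamma_5)$, hence $x_1,x_2\notin Fv(M)$) supply exactly the side conditions demanded by the $\rcl$-rules and by the pushing lemmas. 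Part~(iv) is the easiest: each of the four $\llG$-equivalences nests only weakenings and contractions, and since the encodings are homomorphic on these constructors, each equivalence maps directly to the corresponding $\rcl$-equivalence $\epsilon_1,\dots,\epsilon_4$.

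The main obstacle I anticipate is not any single rule but the cluster $(\beta_\mathsf{g})$, $(\pi)$ and $(\gamma_5)$, where the two encodings are genuinely distinct and are related only through administrative (Regnier-$\sigma$-style) rewriting: the argument leans entirely on Lemmas~\ref{lemma:pi2-for-contexts} and~\ref{lemma:int-of-append}, and those must be instantiated with the hole $M$ chosen correctly (to $(\lambda x.\intt{t})\intt{u}$ and to $\intt{t}$, respectively), while the three-step chain for $(\gamma_5)$---push the contraction in (Lemma~\ref{lemma:prop-of-cont}(ii)), fire $\gamma_3$ in the argument, then re-lift (Lemma~\ref{lemma:context-closure})---must be assembled in the right order. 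Finally, the bookkeeping of which reductions become strict $\rcl$-reductions and which collapse to equalities must be tracked precisely, since it is exactly the data consumed by the lexicographic termination measure introduced at the start of this subsection.
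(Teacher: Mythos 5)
Your overall strategy—case analysis on the root redex after unfolding Definition~\ref{def:mapping}, with congruence cases discharged by Lemma~\ref{lemma:context-closure} and the root cases closed by the commutation lemmas—is exactly the approach the paper takes (its proof is the one-line ``case analysis on the outermost reduction or equivalence, using Definition~\ref{def:mapping}''), and almost all of your individual computations are correct: $(\sigma)$ via $\beta$ and the substitution lemma, $(\mu)$ via $\beta$ and Lemma~\ref{lemma:prop-of-isub}, $(\beta_{\mathsf{g}})$ via Lemma~\ref{lemma:pi2-for-contexts}, $(\pi)$ via Lemma~\ref{lemma:int-of-append}, the two-step simulations of $(\gamma_4),(\gamma_5),(\omega_4),(\omega_5)$, and the collapse of $(\gamma_6),(\omega_6)$ to syntactic equality.

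The one genuine problem is your treatment of $(\gamma_3)$ and $(\omega_3)$. Your unfolding is right: since $\intt{tk}=\intc{k}(\intt{t})$ and $\intc{\cont{x}{x_1}{x_2}{k}}(M)=\cont{x}{x_1}{x_2}{\intc{k}(M)}$ (and likewise for weakening), both sides of $(\gamma_3)$ encode to $\cont{x}{x_1}{x_2}{(\intc{k}(\intt{t}))}$ and both sides of $(\omega_3)$ to $\weak{x}{(\intc{k}(\intt{t}))}$. But these are \emph{term}-level rules, so they fall under part~(i), which asserts a genuine $\rcl$-reduction; declaring them ``degenerate reflexive instances of $\to_{\rcl}$'' does not discharge that obligation, because $\to_{\rcl}$ is not reflexive and, more importantly, Proposition~\ref{prop:inclusion} files $(\gamma_3)$ and $(\omega_3)$ under the strict order $>_{\rcl}$ (defined via $\to^{+}_{\rcl}$), which your own computation shows is impossible. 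The honest conclusion from your case analysis is that part~(i) as stated fails for these two rules and that they must be reclassified with $(\gamma_6),(\omega_6)$ into the equality bucket; the overall development survives because they strictly decrease the auxiliary measures ($\cnorm{\cont{x}{x_1}{x_2}{(tk)}}-\cnorm{t(\cont{x}{x_1}{x_2}{k})}=\size{t}>0$, and $\omega_3$ preserves $\cnorm{\cdot}$ while sending $\wnorm{\cdot}$ from $1+\wnorm{t}$ to $0$), so the lexicographic argument of Lemmas~\ref{lemma:cnorm}--\ref{lemma:wnorm} and Proposition~\ref{prop:inclusion} can absorb them—but that repair has to be stated, not papered over. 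A second, smaller gap: for $(\gamma\omega_2)$ at the context level you need a version of Lemma~\ref{lemma:int-of-sub} asserting $\intc{k\isub{x}{x_2}}(M)=(\intc{k}(M))\isub{x}{x_2}$, i.e.\ commutation of substitution \emph{into the context} with $\intc{\;}$, which is not literally among the lemmas you invoke and should be proved alongside Lemma~\ref{lemma:prop-of-isub}.
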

\begin{proof}
The proof goes by case analysis on the outermost reduction or
equivalence performed, using  Definition~\ref{def:mapping}.
\end{proof}

The previous proposition shows that $\beta_\mathsf{g}$, $\pi$,
$\sigma$, $\mu$, $\gamma_1$ - $\gamma_5$, $\omega_1$ - $\omega_5$,
$\gamma\omega_1$ and $\gamma\omega_2$ $\llG$-reductions are
interpreted by $\rcl$-reductions and that $\gamma_6$ and
$\omega_6$ $\llG$-reductions are interpreted by an identity in the
$\rcl$. Since the set of equivalences of the two calculi coincide,
they are trivially preserved. If one wants to prove that there is
no infinite sequence of $\llG$-reductions one has to prove that
there cannot exist an infinite sequence of $\llG$-reductions which
are all interpreted as equalities. To prove this, one shows that
if a term is reduced with such a $\llG$-reduction, it is reduced
for another order that forbids infinite decreasing chains. This
order is itself composed of several orders, free of infinite
decreasing chains (Definition~\ref{def:ord}).

\begin{definition}
\label{def:x} \label{def:cnorm}\label{def:wnorm}
The functions
$\size{\:},\;\cnorm{\;},\;\wnorm{\;}: \LlG \to \mathbb{N}$ are
defined as follows:
$$
\begin{array}{rclrcl}
\size{x} & = & 1 & \size{tk} & = & \size{t} + \size{k} \\
\size{\lambda x.t} & = & 1 + \size{t} & \size{\bindx{t}} & = & 1 + \size{t} \\
\size{\weak{x}{e}} & = & 1 + \size{e}  & \size{t::k} & = & \size{t} + \size{k} \\
\size{\cont {x}{y}{z}{e}} & = & 1 + \size{e} & & &
\end{array}
$$

$$
\begin{array}{rclcrcl}
\cnorm{x} & = & 0 & \qquad & \wnorm{x} & = & 1\\
\cnorm{\lambda x.t} & = & \cnorm{t} & \qquad & \wnorm{\lambda x.t} & = & 1 + \wnorm{t}\\
\cnorm{\weak{x}{e}} & = & \cnorm{e} & \qquad & \wnorm{\weak{x}{e}} & = & 0\\
\cnorm{\cont{x}{y}{z}{e}} & = & \cnorm{e}  + \size{e} & \qquad &
\wnorm{\cont {x}{y}{z}{e}} & = & 1 + \wnorm{e}\\
\cnorm{tk} & = & \cnorm{t} + \cnorm{k} & \qquad & \wnorm{tk} & = & 1 + \wnorm{t} + \wnorm{k}\\
\cnorm{\bindx{t}} & = & \cnorm{t} & \qquad & \wnorm{\bindx{t}} & = & 1 + \wnorm{t}\\
\cnorm{t::k} & = & \cnorm{t} + \cnorm{k} & \qquad & \wnorm{t::k} & = & 1 + \wnorm{t} + \wnorm{k}
\end{array}
$$
\end{definition}

\begin{lemma}\label{lemma:cnorm}
For all $e, e' \in \LlG$:
\begin{itemize}
 \item[(i)] If $e \; \to_{\gamma_6} \; e'$, then $\cnorm{e} >
 \cnorm{e'}$.
 \item[(ii)] If $e \; \to_{\omega_6} \; e'$, then
 $\cnorm{e} = \cnorm{e'}$.
 \item[(iii)] If $e \; \equiv_{\llG} \; e'$, then
 $\cnorm{e} = \cnorm{e'}$.
\end{itemize}
\end{lemma}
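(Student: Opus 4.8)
The plan is to prove all three items by induction on the position of the contracted redex (equivalently, on the structure of $e$), reducing each to a root-level computation plus a congruence step. The whole argument rests on one auxiliary fact that I would establish first: the rules $\to_{\gamma_6}$, $\to_{\omega_6}$ and every $\llG$-equivalence of Figure~\ref{fig:equiv-rcl} preserve $\size{\cdot}$. This is immediate at the root, e.g. $\size{\cont{x}{x_1}{x_2}{(t::k)}} = 1+\size{t}+\size{k} = \size{t::(\cont{x}{x_1}{x_2}{k})}$, and similarly $\size{t::(\weak{x}{k})} = 1+\size{t}+\size{k} = \size{\weak{x}{(t::k)}}$ and both sides of each $\epsilon_i$ agree; since every defining clause of $\size{\cdot}$ is a sum of the sizes of the immediate subexpressions plus a constant, size preservation propagates through arbitrary contexts by a trivial induction.

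For (i), the base case is the root redex $\cont{x}{x_1}{x_2}{(t::k)} \to_{\gamma_6} t::(\cont{x}{x_1}{x_2}{k})$. Unfolding the definitions gives $\cnorm{\cont{x}{x_1}{x_2}{(t::k)}} = (\cnorm{t}+\cnorm{k}) + (\size{t}+\size{k})$ whereas $\cnorm{t::(\cont{x}{x_1}{x_2}{k})} = \cnorm{t} + (\cnorm{k}+\size{k})$, so the difference is exactly $\size{t} \ge 1$ and the norm strictly decreases. For the congruence step I check that each constructor is monotone for $\cnorm{\cdot}$: for $\lambda x.{\cdot}$, $\bindx{\cdot}$, $\weak{x}{\cdot}$, ${\cdot}\,k$, $t\,{\cdot}$, ${\cdot}::k$ and $t::{\cdot}$ the norm is just the sum of the immediate subnorms plus constants independent of the subexpression, so a strict decrease in one subexpression yields a strict decrease overall. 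The only delicate constructor is contraction, where $\cnorm{\cont{u}{v}{w}{e_0}} = \cnorm{e_0} + \size{e_0}$: here $\cnorm{e_0} > \cnorm{e_0'}$ still gives $\cnorm{\cont{u}{v}{w}{e_0}} > \cnorm{\cont{u}{v}{w}{e_0'}}$ precisely because $\size{e_0} = \size{e_0'}$ by the auxiliary fact.

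For (ii) and (iii) the structure is identical, with equalities in place of strict inequalities. At the root one computes directly $\cnorm{t::(\weak{x}{k})} = \cnorm{t}+\cnorm{k} = \cnorm{\weak{x}{(t::k)}}$ for $\omega_6$, and for each equivalence both sides evaluate to the same value; the two genuinely interesting ones are $\epsilon_3$ and $\epsilon_4$, where both sides equal $\cnorm{M} + 2\size{M} + 1$ because the nested contraction contributes its own $\size$ summand twice. The congruence step then propagates the equality through every constructor, again invoking size preservation in the contraction case exactly as above.

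Main obstacle: there is no deep difficulty here, but the one point that must not be overlooked is that the contraction constructor is \emph{not} monotone for $\cnorm{\cdot}$ on its own — its $+\,\size{e_0}$ summand could in principle spoil both the strict decrease in (i) and the equalities in (ii) and (iii). This is exactly why the size-preservation statement for $\gamma_6$, $\omega_6$ and $\equiv_{\llG}$ has to be proved first and invoked at every contraction node. Indeed the definition of $\cnorm{\cdot}$ is engineered so that $\gamma_6$, which moves a contraction into a strictly smaller scope ($k$ rather than $t::k$), pays a strictly smaller size penalty, which is the source of the decrease.
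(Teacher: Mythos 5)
Your proof is correct and complete; the paper itself states this lemma without proof, and your argument is exactly the direct computation one would expect to fill that gap. All root-level calculations check out (in particular the strict decrease by $\size{t}\ge 1$ for $\gamma_6$ and the value $\cnorm{M}+2\size{M}+1$ on both sides of $\epsilon_3$ and $\epsilon_4$), and you correctly isolate the one genuinely non-trivial point, namely that the contraction constructor is not monotone for $\cnorm{\cdot}$ by itself, so the auxiliary size-preservation fact for $\gamma_6$, $\omega_6$ and $\equiv_{\llG}$ must be proved first and invoked at every contraction node of the surrounding context.
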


\begin{lemma}\label{lemma:wnorm}
\rule{0in}{0in}
\begin{itemize}
 \item[(i)] For all $e, e' \in \LlG$: If $e \; \to_{\omega_6} \; e'$, then
 $\wnorm{e} > \wnorm{e'}$.
 \item[(ii)]  If $e \; \equiv_{\llG} \; e'$, then
 $\wnorm{e} = \wnorm{e'}$.
\end{itemize}
\end{lemma}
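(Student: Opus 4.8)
The plan is to treat part~(ii) first and part~(i) second, both by a bookkeeping induction on the syntax of $\llG$-expressions driven by Definition~\ref{def:wnorm}. For part~(ii) I would verify that $\wnorm{\;}$ is invariant under each of the four generators of $\equiv_{\llG}$ and then lift this to arbitrary one-hole contexts. The axioms are immediate from the definition: under $\epsilon_1$ both $\weak{x}{(\weak{y}{M})}$ and $\weak{y}{(\weak{x}{M})}$ have weight $0$; under $\epsilon_2$ both $\cont{x}{x_1}{x_2}{M}$ and $\cont{x}{x_2}{x_1}{M}$ give $1+\wnorm{M}$; and under $\epsilon_3,\epsilon_4$ the two stacked contractions on each side each contribute $+1$ over a common core, so both sides give $2+\wnorm{M}$. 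Closure under contexts is then automatic, since every constructor computes its weight as a fixed function of the weights of its immediate subexpressions, so equal subexpression weights force equal parent weights; this passes even through $\weak{y}{(-)}$, where both images have weight $0$.

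For part~(i) I would argue by induction on the structure of $e$, equivalently by locating the $\omega_6$-redex. The heart of the matter is the design of $\wnorm{\;}$: the clause $\wnorm{\weak{x}{e}}=0$ assigns weight $0$ to any expression whose head symbol is a weakening, whereas every other constructor contributes $1$ plus the weights of its immediate subexpressions. Thus the basic step, which rewrites the redex $t::(\weak{x}{k})$, of weight $1+\wnorm{t}+\wnorm{\weak{x}{k}}=1+\wnorm{t}$, into $\weak{x}{(t::k)}$, of weight $0$, strictly decreases the weight by $1+\wnorm{t}\ge 1$. This is precisely the move that floats a weakening to the head and collapses the local weight to zero, and it is what makes the step strict rather than merely non-increasing.

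It then remains to propagate this strict drop through the enclosing constructor when the redex is nested, and here the congruence cases split sharply. For each of $\lambda x.(-)$, $\bindx(-)$, $\cont{x}{y}{z}{(-)}$, the cut $(-)k$ and $t(-)$, and the cons $(-)::k$ and $t::(-)$, the weight equals $1$ plus the sum of the weights of the immediate subexpressions, hence is strictly monotone in the weight of the subexpression carrying the redex, so the induction hypothesis lifts the strict inequality to $e$. The step I expect to be the main obstacle is the weakening constructor $\weak{y}{(-)}$, the one constructor under which $\wnorm{\;}$ is constant and therefore not strictly monotone. I would resolve it by again exploiting $\wnorm{\weak{\cdot}{\cdot}}=0$: I track, alongside the induction, the occurrence at which $\omega_6$ fires, so that the strict decrease is charged to the head weakening that the step newly exposes at the firing site rather than to the constant outer layer, and I use the inertness of the outer weakening for the $\omega_6$-step to transfer the induction hypothesis on the immediate subexpression to $e$. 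Part~(ii) finally guarantees that passing to the quotient by $\equiv_{\llG}$ does not disturb this count, so that $\wnorm{\;}$ can serve as the third, strictly decreasing component of the well-founded order assembled in Section~\ref{sec:typeSN_Gtz}.
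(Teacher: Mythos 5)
Your part~(ii) is correct, and in part~(i) the root computation ($1+\wnorm{t}$ versus $0$) and the congruence cases for every constructor except weakening are fine, since those clauses are strictly monotone in the weight of the subexpression carrying the redex. But the weakening case is not an obstacle that bookkeeping can remove: it is a point where the claimed strict inequality is simply false. Because $\wnorm{\;}$ is computed compositionally and $\wnorm{\weak{y}{e}}=0$ irrespective of $e$, whenever the $\omega_6$-redex lies strictly below some weakening the two sides have equal weight at that weakening node (both $0$), and hence equal weight all the way up to the root: $\wnorm{e}=\wnorm{e'}$ exactly. Concretely, $e=\weak{y}{(z(t::\weak{x}{k}))}\to_{\omega_6}\weak{y}{(z(\weak{x}{(t::k)}))}=e'$ is a legal step between well-formed expressions with $\wnorm{e}=0=\wnorm{e'}$. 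No scheme that ``charges the strict decrease to the head weakening newly exposed at the firing site'' can manufacture a strict inequality between two equal numbers, so the step you yourself flag as the main obstacle cannot be closed as you describe. For what it is worth, the paper states this lemma without any proof, and your analysis has in fact located a genuine soft spot in the published statement: what is provable is strict decrease for redexes not occurring below a weakening, and mere equality otherwise, which as stated also leaves a gap in Proposition~\ref{prop:inclusion} for $\omega_6$-steps fired under a weakening.

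A correct repair changes the measure rather than the bookkeeping. For instance, let $\pnorm{e}$ be the sum, over all occurrences of the weakening operator in $e$, of the number of constructors strictly above that occurrence. An $\omega_6$-step $t::(\weak{x}{k})\to\weak{x}{(t::k)}$ moves exactly one weakening up past one cons node and leaves the depth of every other weakening occurrence unchanged, so $\pnorm{\;}$ decreases by exactly $1$ in \emph{every} context, including under outer weakenings; moreover it is invariant under $\equiv_{\llG}$, since $\epsilon_1$ merely swaps two adjacent weakenings of depths $d$ and $d+1$, while $\epsilon_2$--$\epsilon_4$ permute contractions without moving any weakening relative to the root. This measure can replace $\wnorm{\;}$ as the third component of $\gg$, after which your overall plan (and the paper's lexicographic argument) goes through. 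Alternatively, keep $\wnorm{\;}$ and prove part~(i) only for redexes not under a weakening, recording $\wnorm{e}=\wnorm{e'}$ in the remaining case, but then the lexicographic product as defined does not decrease strictly on those steps and a further well-founded component would have to be added.
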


Now we can define the following orders based on the previously
introduced mapping and norms.
\begin{definition}\label{def:ord}
We define the following strict orders and equivalencies on $\LlG
\cap$:
\begin{itemize}
\item[(i)] $t >_{\rcl} t'$\; iff \; $\intt{t} \rightarrow^+_{\rcl} \intt{t'}$;\;\;  $t =_{\rcl} t'$\; iff \;
            $\intt{t} \equiv_{\rcl} \intt{t'}$\\
          $k >_{\rcl} k'$\; iff \; $\intc{k}(M) \rightarrow^+_{\rcl} \intt{k'}(M)$\; for every $\rcl$ term $M$ ;\\
          $k =_{\rcl} k'$\; iff \; $\intc{k}(M) \equiv_{\rcl} \intc{k'}(M)$ or $\intc{k}(M) \equiv \intt{k'}(M)$ for every $\rcl$ term $M$;
    \item[(ii)]  $e >_c e'$\; iff \; $\cnorm{e} > \cnorm{e'}$;\;\; $e
    =_c e'$\; iff \;$\cnorm{e} = \cnorm{e'};$
    \item[(iii)] $e >_w e'$\; iff \;$\wnorm{e} > \wnorm{e'}$;\;\; $e =_w e'$\; iff \;
    $\wnorm{e} = \wnorm{e'};$
\end{itemize}
\end{definition}

The lexicographic product of two orders $>_1$ and $>_2$ is
 defined as~\cite{baadnipk98}:\\
\[a >_1 \times_{lex} >_2 b \; \Leftrightarrow \;a
>_1 b \;\;or\;\; (a =_1 b \;and \; a >_2 b). \]

\begin{definition}
We define the relation $\gg$ on  $\LlG$ as the lexicographic product:
\[ \gg \;\;\;= \;\;\;>_{\rcl}\; \times_{lex} \; >_c \; \times_{lex} \;  >_w.\]
\end{definition}

The following propositions proves that the reduction relation on
the set of typed $\llG$-expressions is included in the given
lexicographic product $\gg$.
\begin{proposition}\label{prop:inclusion}
For each $e`: \LlG$: if  $e \to e'$, then $e \gg
e'$.
\end{proposition}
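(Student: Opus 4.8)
The plan is to argue by a case analysis on which reduction rule is fired in the step $e \to e'$, matching each rule to the unique coordinate of $\gg = {>_{\rcl}} \times_{lex} {>_c} \times_{lex} {>_w}$ that records its effect. The three measures have been tailored so that every rule strictly decreases exactly one coordinate while leaving all earlier coordinates fixed with respect to the associated equivalences $=_{\rcl}$ and $=_c$; this is precisely the bookkeeping a lexicographic decrease requires. Since a single reduction step applies exactly one rule, the cases below are mutually exclusive and exhaustive.

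First I would dispatch every reduction other than $\gamma_6$ and $\omega_6$. For such a step, Theorem~\ref{prop:simulation}(i) (when the redex is a term) and Theorem~\ref{prop:simulation}(iii) (when it is a context) yield a genuine $\rcl$-reduction, that is $\intt{e} \to^+_{\rcl} \intt{e'}$, respectively $\intc{e}(M) \to^+_{\rcl} \intc{e'}(M)$ for every $M \in \Rcl$. By Definition~\ref{def:ord}(i) this is exactly $e >_{\rcl} e'$, so the first coordinate already decreases strictly and $e \gg e'$ follows at once from the definition of the lexicographic product.

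Next I would treat $e \to_{\gamma_6} e'$. By Theorem~\ref{prop:simulation}(ii) this rule is mapped to an identity, $\intc{e}(M) \equiv \intc{e'}(M)$, so that $e =_{\rcl} e'$ by Definition~\ref{def:ord}(i) and the first coordinate is tied. The decrease must therefore come from the second coordinate, which is supplied by Lemma~\ref{lemma:cnorm}(i): $\cnorm{e} > \cnorm{e'}$, i.e. $e >_c e'$; hence $e \gg e'$. The final case $e \to_{\omega_6} e'$ is analogous but exercises all three measures. Again $e =_{\rcl} e'$ by Theorem~\ref{prop:simulation}(ii), and now in addition $e =_c e'$ by Lemma~\ref{lemma:cnorm}(ii), so the two leading coordinates are tied and the strict decrease has to be produced by the third one; this is exactly Lemma~\ref{lemma:wnorm}(i), giving $\wnorm{e} > \wnorm{e'}$, i.e. $e >_w e'$. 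Thus $e \gg e'$ in every case.

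The only delicate point is keeping the lexicographic structure honest: whenever we pass to the $>_c$ or $>_w$ coordinate we must certify that the preceding coordinates are genuinely \emph{equal} under the relevant equivalence, not merely non-increasing. This is guaranteed by the ``identity'' clause of Theorem~\ref{prop:simulation}(ii) (for the $=_{\rcl}$ tie in the $\gamma_6$ and $\omega_6$ cases) together with the equality in Lemma~\ref{lemma:cnorm}(ii) (for the additional $=_c$ tie in the $\omega_6$ case). Since the substantive work has already been isolated into the simulation theorem and the two norm lemmas, the proof of this proposition amounts to assembling them along the three cases, so I do not expect any serious obstacle beyond this alignment.
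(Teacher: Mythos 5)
Your proposal is correct and follows essentially the same route as the paper's proof: the same three-way case split (all rules except $\gamma_6,\omega_6$ handled by a strict $>_{\rcl}$ decrease via Theorem~\ref{prop:simulation}, then $\gamma_6$ via Lemma~\ref{lemma:cnorm}, then $\omega_6$ via Lemmas~\ref{lemma:cnorm} and~\ref{lemma:wnorm}), assembled lexicographically. Your explicit remark that the earlier coordinates must be certified as \emph{equal} (not merely non-increasing) is exactly the bookkeeping the paper's proof performs implicitly.
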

\begin{proof}
By case analysis on the kind of reduction and the structure of $\gg$.\\
If $e \to e'$ by $\beta_\mathsf{g}$,  $\sigma$, $\pi$, $\mu$,
$\gamma_1$, $\gamma_2$, $\gamma_3$, $\gamma_4$ $\gamma_5$,
$\gamma\omega_1$, $\gamma\omega_2$, $\omega_1$, $\omega_2$,
$\omega_3$ $\omega_4$ or $\omega_5$ reduction, then $e >_{\rcl}
e'$
by Proposition~\ref{prop:simulation}.\\
If $e \to e'$ by $\gamma_6$, then $e =_{\rcl} e'$ by
Proposition~\ref{prop:simulation}, and $e >_c e'$ by Lemma~\ref{lemma:cnorm}.\\
Finally, if $e \to e'$ by $\omega_6$, then $e =_{\rcl} e'$ by
Proposition~\ref{prop:simulation}, $e =_c e'$ by
Lemma~\ref{lemma:cnorm} and $e >_w e'$ by
Lemma~\ref{lemma:wnorm}.
\end{proof}

Strong normalisation of $"->"$ is another terminology for the
well-foundness of the relation $"->"$ and it is well-known that a
relation included in a well-founded relation is well-founded and
that the lexicographic product of well-founded relations is
well-founded.

\begin{theorem}[Strong normalisation of the $\llG \cap$]\label{prop:SN-gtz}
Each expression in $\LlG \cap$ is strongly normalising.
\end{theorem}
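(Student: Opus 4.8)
The plan is to derive strong normalisation of $\to_{\llG}$ on typeable expressions directly from the inclusion $\to_{\llG} \subseteq {\gg}$ established in Proposition~\ref{prop:inclusion}, by showing that $\gg$ is well-founded on $\LlG\cap$. Since a relation included in a well-founded relation is itself well-founded, and since $\gg$ is the lexicographic product $>_{\rcl} \times_{lex} >_c \times_{lex} >_w$, it suffices to prove that each of the three component relations is well-founded on the set of typeable expressions; the well-foundedness of the lexicographic product then follows from the standard fact quoted just before the statement.

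The two numeric components are immediate. By Definition~\ref{def:ord}, $>_c$ and $>_w$ are the inverse images of the usual strict order $>$ on $\mathbb{N}$ under the functions $\cnorm{\;}$ and $\wnorm{\;}$, respectively. As the inverse image of a well-founded order under an arbitrary function is well-founded, and $(\mathbb{N},>)$ is well-founded, both $>_c$ and $>_w$ are well-founded on all of $\LlG$, a fortiori on $\LlG\cap$.

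The essential step is the well-foundedness of $>_{\rcl}$, and this is exactly where typeability enters. Suppose, for contradiction, there were an infinite $>_{\rcl}$-descending chain of typeable terms $t_0 >_{\rcl} t_1 >_{\rcl} \cdots$. By definition of $>_{\rcl}$ this yields an infinite $\rcl$-reduction sequence $\intt{t_0} \to^+_{\rcl} \intt{t_1} \to^+_{\rcl} \cdots$. But if $\Gamma \vdash t_0:\tS$, then by Proposition~\ref{prop:soundness}(i) we have $\Gamma \vdash_{\rcl} \intt{t_0}:\tS$, so $\intt{t_0}$ is a typeable $\rcl$-term, hence $\intt{t_0} \in \SN$ by Theorem~\ref{th:typ=>SN}; this contradicts the infinite reduction. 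The argument for contexts is analogous: for a typeable context $k$, Proposition~\ref{prop:soundness}(ii) shows that $\intc{k}(M)$ is a typeable $\rcl$-term for a suitably typed $M$, hence $\intc{k}(M) \in \SN$; an infinite $>_{\rcl}$-chain of contexts would, for this fixed $M$, produce an infinite $\rcl$-reduction out of $\intc{k}(M)$, again contradicting Theorem~\ref{th:typ=>SN}. Subject reduction for $\llG\cap$ (Proposition~\ref{prop:sr-sequent}) guarantees that all reducts of a typeable expression remain typeable, so the whole argument stays within $\LlG\cap$ and the first component genuinely lives in the well-founded structure of typeable $\rcl$-terms modulo $\equiv_{\rcl}$.

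Combining the three cases, $\gg$ is well-founded on $\LlG\cap$; by Proposition~\ref{prop:inclusion} the reduction $\to_{\llG}$ on typeable expressions is included in $\gg$, so it admits no infinite descending chain, and hence every expression in $\LlG\cap$ is strongly normalising. I expect the only genuine difficulty to be the well-foundedness of $>_{\rcl}$, which is not intrinsic to $\llG$ but is transferred from the $\rcl$-calculus through the type-preserving embedding $\intt{\;}$ and the simulation result (Theorem~\ref{prop:simulation}); all the preparatory lemmas on $\intt{\;}$, $\intc{\;}$ and on the norms $\cnorm{\;}$, $\wnorm{\;}$ exist precisely to make this transfer and the accompanying lexicographic bookkeeping go through.
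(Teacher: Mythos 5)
Your proposal is correct and follows essentially the same route as the paper: inclusion of $\to_{\llG}$ in the lexicographic product $\gg$ via Proposition~\ref{prop:inclusion}, well-foundedness of $>_c$ and $>_w$ by interpretation into $(\mathbb{N},>)$, and well-foundedness of $>_{\rcl}$ transferred from strong normalisation of typeable $\rcl$-terms through the type-preserving embedding of Proposition~\ref{prop:soundness}. Your explicit appeal to subject reduction to keep the descending chain inside $\LlG\cap$ is a detail the paper leaves implicit, but it is the same argument.
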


\begin{proof}
  The reduction $"->"$ is well-founded on $\LlG\cap$  as it is
  included (Proposition~\ref{prop:inclusion}) in the relation $\gg$ which is well-founded as the lexicographic
  product of the well-founded relations $>_{\rcl}$, $>_c$ and
  $>_w$. Relation $>_{\rcl}$ is based on the interpretation $\intt{~}:\LlG
"->" \Rcl$.  By Proposition~\ref{prop:soundness} typeability is
preserved by the interpretation $\intt{~}$ and $"->"_{\rcl}$ is strongly normalising
(i.e., well-founded) on $\Rcl\cap$
(Section~\ref{sec:reducibility}), hence $>_{\rcl}$ is well-founded
on $\LlG\cap$.
  Similarly, $>_c$ and $>_w$ are well-founded, as they are based on
  interpretations into the well-founded relation $>$ on the set~$\mathbb{N}$ of natural
  numbers.
\end{proof}

\subsection{SN $\Rightarrow$ Typeability in $\llG \cap$}
\label{sec:SNtype_Gtz}

Now, we want to prove that if a $\llG$-term is SN, then it is
typeable in the system $\llG \cap$. We follow the procedure used
in Section~\ref{sec:SNtype}. The proofs are similar to the ones in
Section~\ref{sec:SNtype}.

The abstract syntax of $\llG$-normal forms is the following:

\hspace*{5mm}
$
\begin{array}{rcl}
\hspace*{-3mm} t_{nf} & ::= & x\,|\,\lambda
x.t_{nf}\,|\,\lambda x. \weak{x}{t_{nf}}\,|\,x(t_{nf}::k_{nf})
\,|\, \cont{x}{y}{z}{y(t_{nf}::k_{nf})}\\
\hspace*{-3mm} k_{nf} & ::= & \bindx t_{nf}\,|\,\bindx
\weak{x}{t_{nf}}\,|\,t_{nf}::k_{nf}
\,|\, \cont{x}{y}{z}{(t_{nf}::k_{nf})},\;y \in Fv(t_{nf}), z \in Fv(k_{nf})\\
\hspace*{-3mm} w_{nf} & ::= & \weak{x}{e_{nf}}\,|\,\weak{x}{w_{nf}}\\
\end{array}
$

We use $e_{nf}$ for any $\llG$-expression in the normal form.
\begin{proposition}\label{prop:nf-are-typ-Gtz}
$\llG$-normal forms are typeable in the system $\llG \cap$.
\end{proposition}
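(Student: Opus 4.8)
The plan is to prove the statement by simultaneous structural induction on the three mutually recursive categories of normal forms $t_{nf}$, $k_{nf}$ and $w_{nf}$, exactly mirroring the proof of Proposition~\ref{prop:nf-are-typ} for $\rcl\cap$. For a term normal form I will produce a basis $\Gamma$ and a strict type $\tS$ with $\Gamma \vdash t_{nf}:\tS$, for a context normal form a basis $\Gamma$, a stoup type $\tB$ and a strict type $\tS$ with $\Gamma;\tB \vdash k_{nf}:\tS$, and the clauses for $w_{nf}$ will be treated uniformly for both sorts. Throughout I use the sequent-style rules of Figure~\ref{fig:inttyp-llG} in place of the natural-deduction rules used in the $\rcl\cap$ argument.

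The base cases and the purely structural cases are immediate. The variable $x$ is typed by $(Ax)$. An abstraction $\lambda x.t_{nf}$ is typed by applying $(\to_R)$ to the typing of $t_{nf}$ given by the induction hypothesis; the side condition $x \in Fv(t_{nf})$ guarantees, via Lemma~\ref{prop:bases-weak}, that $x$ occurs in the basis, so $(\to_R)$ applies. For $\lambda x.\weak{x}{t_{nf}}$ I first insert $x:\top$ with $(Weak_t)$ and then abstract with $(\to_R)$, obtaining a type $\top \to \tS$; similarly $\bindx t_{nf}$ and $\bindx \weak{x}{t_{nf}}$ are obtained by $(Sel)$, the latter after a preliminary $(Weak_t)$ step. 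The weakening normal forms $w_{nf}$ are typed by repeated application of $(Weak_t)$ or $(Weak_k)$ to the typing of the inner expression supplied by the induction hypothesis, selecting the term or context rule according to the sort of $e_{nf}$.

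The genuinely delicate cases are those that build a cut, namely the head cut $x(t_{nf}::k_{nf})$ and the cons context $t_{nf}::k_{nf}$, since these are the places where the stoup type and the intersection machinery must be coordinated. First I type $t_{nf}$ and $k_{nf}$ by the induction hypothesis and assemble the context $t_{nf}::k_{nf}$ with $(\to_L)$, which yields a stoup of the form $\cap_j^m(\cap_i^n\tS_i \to \tT_j)$; this already settles the cons case. For the head cut $x(t_{nf}::k_{nf})$ I then give the head variable $x$, by $(Ax)$, exactly the strict types $\cap_i^n\tS_i \to \tT_j$ occurring as conjuncts of that stoup, and close with $(Cut)$, so that the intersection demanded on the left of the cut matches the stoup produced on the right. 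The point to get right here is precisely this matching: because $(Ax)$ delivers only strict types and each conjunct $\cap_i^n\tS_i \to \tT_j$ is itself strict, the required family of premises for $x$ can always be produced; taking the outer intersections to be single types wherever the grammar permits keeps the construction finite.

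Finally, the contraction cases $\cont{x}{y}{z}{y(t_{nf}::k_{nf})}$ and $\cont{x}{y}{z}{(t_{nf}::k_{nf})}$ reduce to the cut cases above. After typing the body---in which, by the well-formedness conditions, $y$ occurs (as the head, respectively in $t_{nf}$) and $z$ occurs (in $t_{nf}::k_{nf}$, respectively in $k_{nf}$), so that both carry a type in the basis---I merge the assignments $y:\tA$ and $z:\tB$ into $x:\tA\cap\tB$ by $(Cont_t)$ or $(Cont_k)$, respectively. The only obstacle worth flagging is the type bookkeeping in the cut cases; everything else is a direct transcription of the $\rcl\cap$ argument of Proposition~\ref{prop:nf-are-typ}.
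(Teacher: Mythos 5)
Your proposal is correct and follows exactly the route the paper takes: the paper's proof is simply ``by mutual induction on the structure of $t_{nf}$, $k_{nf}$ and $w_{nf}$'', and your case analysis (including the coordination of the stoup type produced by $(\to_L)$ with the strict types fed to the head variable via $(Ax)$ before $(Cut)$) is a faithful and correct elaboration of that induction.
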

\begin{proof}
By mutual induction on the structure of $t_{nf}$, $k_{nf}$ and
$w_{nf}$.
\end{proof}

\noindent The following two lemmas explain the behavior of the
meta operators $\isub{\;}{\;}$ and $\app\;$  during expansion.

\begin{lemma}[Inverse substitution lemma]\label{prop:inv-subst-Gtz}
\rule{0in}{0in}
\begin{itemize}
\item[(i)] Let $\;\Gamma \vdash t\isub{u}{x}:\tS\;$ and $u$
typeable. Then, there exist $\Delta_j$ and $\tT_j, \;j=0,\ldots,
n$  such that $\Delta_j \vdash u:\tT_j\;$ and $\Gamma',
x:\cap_{i}^{n}\tT_i \vdash t:\tS$, where $\Gamma = \Gamma',\dztop
\sqcap \Delta_1 \sqcap \ldots \sqcap \Delta_n$.
\item[(ii)] Let $\;\Gamma;\tC \vdash k\isub{u}{x}:\tS\;$ and $u$
typeable. Then, there are $\Delta_j$ and $\tT_j, \;j=0,\ldots, n$
such that $\Delta_j \vdash u:\tT_j$ and $\Gamma',
x:\cap_{i}^{n}\tT_i;\tC \vdash k:\tS$, where $\Gamma =
\Gamma',\dztop\sqcap\Delta_1\sqcap\ldots\sqcap\Delta_n$.
\end{itemize}
\end{lemma}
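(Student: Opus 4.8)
The plan is to prove parts (i) and (ii) simultaneously by mutual structural induction on the term $t$ and the context $k$, in exact parallel with the inverse substitution lemma~\ref{prop:inv-subst-lemma} for $\rcl$, but now threading the term/context distinction through the recursion. In each case I examine the outermost constructor of $t$ (resp.\ $k$), read off from the substitution clauses (Figure~\ref{fig:sub-rcl} together with the $\llG$ additions) how $t\isub{u}{x}$ is computed, apply the appropriate clause of the Generation lemma~\ref{prop:intGL-Gtz} to decompose the hypothesised typing $\Gamma \vdash t\isub{u}{x}:\tS$, invoke the induction hypothesis on the immediate subexpression, and finally reapply the corresponding typing rule of Figure~\ref{fig:inttyp-llG} to rebuild a derivation of $t$ in which $x$ carries the intersection type $\cap_{i}^{n}\tT_i$. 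Since $t$ and $k$ are linear, $x$ occurs exactly once, so no spurious occurrences need to be tracked.

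The base case $t=x$ gives $t\isub{u}{x}=u$, so one takes $n=1$, $\tT_1=\tS$, $\Delta_1=\Gamma$, empty $\Gamma'$, and the axiom $x:\tS\vdash x:\tS$. The routine cases are those in which the substitution merely descends under a constructor that does not touch $x$: the abstraction $\lambda y.t_0$, the selection $\bindy t_0$, the cut $t_0k_0$, the cons $t_0::k_0$, and weakening or contraction on a variable other than $x$. For each I strip the constructor with the relevant clause of~\ref{prop:intGL-Gtz}, apply the IH, and reintroduce the constructor with $(\to_R)$, $(Sel)$, $(Cut)$, $(\to_L)$, $(Weak_t)/(Weak_k)$ or $(Cont_t)/(Cont_k)$; the environment split $\Gamma=\Gamma',\dztop\sqcap\Delta_1\sqcap\ldots\sqcap\Delta_n$ is inherited unchanged, and in the context cases the stoup $\tC$ is carried passively. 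The cut and cons cases are exactly where the two parts interlock: substitution into $t_0k_0$ (or $t_0::k_0$) sends $x$ into either the term component or the context component, so part (i) calls part (ii) and conversely, which is why the induction must be mutual.

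Two cases carry the genuine content. First, the erased variable $t=\weak{x}{t_0}$ (and its context form $\weak{x}{k_0}$): here $t\isub{u}{x}=\weak{Fv(u)}{t_0}$, so repeated application of~\ref{prop:intGL-Gtz}(vi)/(viii) peels off a block of $\top$-typed weakenings for precisely the variables of $u$, using the domain correspondence~\ref{prop:bases-weak}, leaving $\Gamma_0\vdash t_0:\tS$. This is the $n=0$ instance, and the hypothesis that $u$ is typeable is indispensable, being the only source of a derivation $\Delta_0\vdash u:\tT_0$; one then closes with $(Weak_t)$ to obtain $\Gamma_0,x:\top\vdash\weak{x}{t_0}:\tS$ with $\Gamma=\Gamma_0,\dztop$. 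Second, the duplicated variable $t=\cont{x}{x_1}{x_2}{t_0}$, where $t\isub{u}{x}=\cont{Fv(u)}{Fv(u_1)}{Fv(u_2)}{t_0[u_1/x_1,u_2/x_2]}$ with $u_1,u_2$ fresh renamings of $u$. I expect this contraction case to be the main obstacle, exactly as in the forward direction (Lemma~\ref{prop:sub-lemma}) and in Lemma~\ref{lemma:add-red}(ii). The plan is to strip the outer contractions via~\ref{prop:intGL-Gtz}(v)/(vii) to expose a typing of the parallel substitution $t_0[u_1/x_1,u_2/x_2]$, apply the IH twice — once for $u_1$ against $x_1$ and once for $u_2$ against $x_2$ — yielding $\Gamma'',x_1:\cap_{i}^{m}\tT_i,x_2:\cap_{i=m+1}^{n}\tT_i\vdash t_0:\tS$ together with derivations for $u_1$ and $u_2$, and then apply $(Cont_t)$ to merge $x_1,x_2$ into a single $x:\cap_{i}^{n}\tT_i$. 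The delicate bookkeeping is to undo the renaming of the free variables of $u$, identifying the contexts obtained for $u_1$ with those for $u_2$ so that the $\Delta_i$ recombine into a single family and $\Gamma=\Gamma'',\dztop\sqcap\Delta_1\sqcap\ldots\sqcap\Delta_n$ comes out as required; this is the mirror image of the contraction step in the proof of~\ref{prop:sub-lemma}, and the example following Lemma~\ref{lemma:add-red} illustrates the shape of the manipulation. The context version runs identically with $(Cont_k)$.
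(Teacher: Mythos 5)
Your proposal is correct and follows exactly the route the paper takes: the paper's proof of Lemma~\ref{prop:inv-subst-Gtz} is simply ``by mutual induction on the structure of terms and contexts'', and your case analysis (generation lemma to decompose, IH, rebuild with the typing rules, with the weakening-on-$x$ case consuming the typeability hypothesis at $n=0$ and the contraction-on-$x$ case recombining the renamed copies of $u$) is a faithful elaboration of that argument. The only nitpick is in the base case $t=x$: with $n=1$ you should also exhibit $\Delta_0\vdash u:\tT_0$ (e.g.\ take $\Delta_0=\Delta_1=\Gamma$ and $\tT_0=\tT_1=\tS$) so that the full family $j=0,\ldots,n$ required by the statement is present.
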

\begin{proof}
By mutual induction on the structure of terms and contexts.
\end{proof}

\begin{lemma}[Inverse append lemma]
\label{prop:inverseapp-Gtz} If $~~\Gamma;\tA\vdash
\app{k}{k'}:\tS$, then there are $\Delta_j$ and $\tT_j,
\;j=0,\ldots, n$ such that $\Delta_j;\tA\vdash k:\tT_j\;$ and
$\Gamma';\cap_i^n\tT_i\vdash k':\tS$, where $\Gamma=\Gamma',\dztop
\sqcap \Delta_1 \sqcap \ldots \sqcap \Delta_n$.
\end{lemma}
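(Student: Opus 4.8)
The plan is to prove this by structural induction on the context $k$, following exactly the recursive clauses that define the append operator $\app{\;}{\;}$, so that the statement is recognised as the inverse of the Append lemma (Proposition~\ref{prop:app-lemma}). The proof runs in parallel with the Inverse substitution lemma (Lemma~\ref{prop:inv-subst-Gtz}), and at each step the only tool needed is the Generation lemma for $\llG\cap$ (Lemma~\ref{prop:intGL-Gtz}), which applies because the system is syntax-directed. The base of the induction is the selection, since this is the clause $\app{(\bindx t)}{k'} = \bindx{tk'}$ where $k'$ is actually absorbed, and the remaining three clauses merely recurse on the inner context.

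For the base case $k = \bindx t$ I would first apply Generation lemma~(iv) to the hypothesis $\Gamma;\tA \vdash \bindx{tk'}:\tS$ to obtain $\Gamma, x:\tA \vdash tk':\tS$, and then Generation lemma~(iii) to this cut to split it into term-typings $\Gamma_j \vdash t:\tT_j$ for $j = 0,\ldots,n$, together with a context-typing $\Delta; \cap_i^n \tT_i \vdash k':\tS$, where $\Gamma, x:\tA = {\Gamma_0}^{\top} \sqcap \Gamma_1 \sqcap \ldots \sqcap \Gamma_n, \Delta$. Since $\bindx{tk'}$ is well-formed we have $x \in Fv(t)$ and $x \notin Fv(k')$, so by the Domain Correspondence (Lemma~\ref{prop:bases-weak}) the declaration of $x$ sits in the $\Gamma_j$ part, and re-applying $(Sel)$ to each $\Gamma_j \vdash t:\tT_j$ recovers a typing of $\bindx t$; checking that these carry the stoup $\tA$ is the delicate point addressed below. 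This should yield the required family $\Delta_j;\tA \vdash \bindx t:\tT_j$ and the $k'$-typing with stoup $\cap_i^n \tT_i$.

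For the three recursive cases I would strip off the outermost constructor with the matching Generation lemma clause, apply the induction hypothesis to the inner appended context $\app{k_0}{k'}$ — which carries the correct stoup — and then rebuild $k$ by re-applying the same rule. Concretely, for $k = u::k_0$ (so $\app{k}{k'} = u::\app{k_0}{k'}$) Generation lemma~(ii) exposes $u$ and $\app{k_0}{k'}$, the induction hypothesis on $\app{k_0}{k'}$ returns the typings of $k_0$ and of $k'$, and rule $(\to_L)$ reassembles $u::k_0$ with the original stoup. The cases $k = \weak{x}{k_0}$ and $k = \cont{x}{y}{z}{k_0}$ are handled the same way, using clauses~(viii) and~(vii) together with $(Weak_k)$ and $(Cont_k)$.

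The delicate point throughout, and the step I expect to be the main obstacle, is the bookkeeping of the bases under $\sqcap$ and of the stoup and intersection types: one must verify that the auxiliary contexts $\Delta_j$ recombine into $\dztop \sqcap \Delta_1 \sqcap \ldots \sqcap \Delta_n$ exactly as in Proposition~\ref{prop:app-lemma}, treat uniformly the degenerate case $n=0$ in which the neutral element $\dztop$ is absorbed into the base, and, in the selection case, confirm that the stoup $\tA$ of $\bindx t$ is faithfully reconstructed from the type assigned to the bound variable $x$ when the cut $tk'$ is split. As in the forward Append lemma, the $(Sel)$/$(\to_L)$/$(Cut)$ interplay is precisely where the strictness of the types and the exact placement of the active formula in the stoup must be tracked with care.
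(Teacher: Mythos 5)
Your proof is correct and takes essentially the same route as the paper, whose entire argument for this lemma is ``by induction on the structure of the context $k$'': you induct on $k$ following the defining clauses of $@$, with the selection case as base (where Generation lemma clauses (iv) and (iii) split the cut $tk'$) and the cons, weakening and contraction cases handled by the matching Generation clause, the induction hypothesis, and re-application of $(\to_L)$, $(Weak_k)$ or $(Cont_k)$. The bookkeeping issues you flag (recombining the $\Delta_j$ under $\sqcap$, the $n=0$ degenerate case, and recovering the stoup $\tA$ as the intersection of the types the split assigns to the bound variable $x$) are exactly the details the paper leaves implicit.
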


\begin{proof}
By induction on the structure of the context $k$.
\end{proof}

Now we prove that the type of a term is preserved during the
expansion.
\begin{proposition}[Head subject expansion]
\label{prop:sub-exp-Gtz} For every $\llG$-term $t$: if $t \to t'$,
$t$ is contracted redex and $\;\Gamma \vdash t':\tS\;$, then
$\;\Gamma \vdash t:\tS$.
\end{proposition}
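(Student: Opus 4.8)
The plan is to proceed exactly as for Proposition~\ref{prop:sub-exp} in the $\rcl$-calculus, namely by a case analysis on the reduction rule whose left-hand side matches the contracted redex $t$. Since $t$ is a \emph{term}, only those rules of Figure~\ref{fig:red-llG} whose left-hand side is a term are relevant: the computational rules $\beta_\mathsf{g}$, $\sigma$ and $\pi$, the contraction-propagation rules $\gamma_1$, $\gamma_2$, $\gamma_3$, the weakening-extraction rules $\omega_1$, $\omega_2$, $\omega_3$, and the term instances of $\gamma\omega_1$ and $\gamma\omega_2$. In every case I would start from the hypothesis $\Gamma \vdash t':\tS$, decompose this derivation with the Generation Lemma (Lemma~\ref{prop:intGL-Gtz}), and then reassemble a derivation of $\Gamma \vdash t:\tS$ from the resulting subderivations, reading the same typing rules bottom-up.

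The three genuinely computational cases are the heart of the argument. For $\sigma$, where $t \equiv T(\bindx v) \to v\isub{T}{x} \equiv t'$, I would apply the Inverse Substitution Lemma (Lemma~\ref{prop:inv-subst-Gtz}(i)) to $\Gamma \vdash v\isub{T}{x}:\tS$, obtaining types $\tT_j$ with $\Delta_j \vdash T:\tT_j$ and $\Gamma', x:\cap_i^n\tT_i \vdash v:\tS$, where $\Gamma = \Gamma', \dztop \sqcap \Delta_1 \sqcap \ldots \sqcap \Delta_n$; rule $(Sel)$ then gives $\Gamma'; \cap_i^n\tT_i \vdash \bindx v:\tS$, and rule $(Cut)$ recombines this with the derivations of $T$ to yield $\Gamma \vdash T(\bindx v):\tS$ (this step presupposes $T$ typeable, which is exactly what the Inverse Substitution Lemma requires and what the induction hypothesis supplies when the proposition is invoked inside the SN~$\Rightarrow$~typeability theorem). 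For $\pi$, where $t \equiv (uk)k' \to u(\append{k}{k'})$, I would analyse $\Gamma \vdash u(\append{k}{k'}):\tS$ with Generation Lemma~\ref{prop:intGL-Gtz}(iii) and then apply the Inverse Append Lemma (Lemma~\ref{prop:inverseapp-Gtz}) to the stoup typing of $\append{k}{k'}$, splitting it into typings of $k$ and $k'$ that rule $(Cut)$ reassembles around the cut $uk$. For $\beta_\mathsf{g}$, where $t \equiv (\lambda x.u)(v::k) \to v(\bindx(uk)) \equiv t'$, I would peel the typing of the cut $v(\bindx(uk))$ apart with Generation Lemma~\ref{prop:intGL-Gtz} through the $(Sel)$ and inner $(Cut)$ layers, pull the variable $x$ out of the family of premises typing $u$ to form abstractions via $(\to_R)$, build the context $v::k$ with $(\to_L)$, and close with $(Cut)$.

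The remaining cases ($\gamma_1$--$\gamma_3$, $\omega_1$--$\omega_3$, and the term instances of $\gamma\omega_1$, $\gamma\omega_2$) are routine: each merely relocates a contraction or weakening operator, so the Generation Lemma strips the structural operator from $t'$ and the matching rule among $(Cont_t)$, $(Weak_t)$, $(\to_R)$, $(Cut)$, $(\to_L)$ reintroduces it on $t$; the $\gamma\omega_2$ instance $\cont{x}{x_1}{x_2}{(\weak{x_1}{e})} \to e\isub{x}{x_2}$ is handled as in the $\rcl$ subject-reduction proof, by composing $(Weak_t)$ and $(Cont_t)$ with the trivial typing $x:\tT_i \vdash x:\tT_i$. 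I expect the main obstacle to be the intersection-type bookkeeping in the $\beta_\mathsf{g}$ and $\sigma$ cases: the rules $(\to_L)$ and $(Cut)$ distribute a family $\cap_i^n\tT_i$ of types for the \emph{same} subterm across $n+1$ premises with bases $\Gamma_0,\ldots,\Gamma_n$ sharing a common domain, so the delicate point is to match these bases (and the $\dztop$ slot when $n=0$) correctly when reconstructing the derivation, which is precisely why the Generation and Inverse lemmas are phrased with that explicit $\dztop \sqcap \Delta_1 \sqcap \ldots \sqcap \Delta_n$ shape.
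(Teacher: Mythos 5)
Your proposal follows exactly the route the paper takes: a case analysis on the applied reduction, using the Generation Lemma throughout, the Inverse Substitution Lemma for the $\sigma$ case and the Inverse Append Lemma for the $\pi$ case --- precisely the two lemmas the paper states immediately before this proposition for that purpose. Your observation that the $\sigma$ case needs the typeability of $T$ (supplied by the induction hypothesis in Theorem~\ref{thm:SNtypable-Gtz}, mirroring the explicit proviso in the $\rcl$ version of the proposition) is a correct and worthwhile refinement of the statement as given.
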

\begin{proof}
By case study according to the applied reduction.
\end{proof}

\begin{theorem}[SN $\Rightarrow$ typeability]\label{thm:SNtypable-Gtz}
All strongly normalising $\llG$ terms are typeable in the
$\llG\cap$ system.
\end{theorem}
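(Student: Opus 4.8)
The plan is to follow the proof of Theorem~\ref{thm:SNtypable} for $\rcl\cap$ essentially verbatim, adapting it to the two--sorted syntax of $\llG$. First I would strengthen the statement so that the induction carries through both sorts: I claim that \emph{every} strongly normalising $\llG$-expression is typeable, i.e.\ every SN term $t$ admits a derivation $\Gamma\vdash t:\tS$ and every SN context $k$ admits a derivation $\Gamma;\tA\vdash k:\tS$; the theorem is the restriction to terms. I would argue by induction on the length of the longest reduction sequence issuing from the expression, with a subsidiary induction on its size, exactly as in the $\rcl$ case. Both components are well defined because the expression is SN, and in a one--step reduction $e\to e'$ the longest sequence strictly decreases, while passing to a proper subexpression strictly decreases the size (the longest sequence never increasing, since inner reductions lift to the whole expression).

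The base and the redex cases are the routine ones. If $e$ is a normal form, its shape is one of those listed in the grammar of $t_{nf}$, $k_{nf}$ and $w_{nf}$, and typeability is Proposition~\ref{prop:nf-are-typ-Gtz}. If $e$ is itself a redex, I contract it to $e'$; then $e'$ is SN with a strictly shorter longest reduction sequence, so by the induction hypothesis $e'$ is typeable, and head subject expansion (Proposition~\ref{prop:sub-exp-Gtz}, together with its evident context analogue) transports the typing back to $e$. Two redex shapes need the auxiliary inverse lemmas: for $\sigma$, $T(\bindx v)\to v\isub{T}{x}$, I would first observe that the value $T$, being a proper subexpression of smaller measure, is typeable by the induction hypothesis, so that the inverse substitution lemma (Lemma~\ref{prop:inv-subst-Gtz}) applies; for $\pi$, $(tk)k'\to t(\append{k}{k'})$, the inverse append lemma (Lemma~\ref{prop:inverseapp-Gtz}) supplies the required split. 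The remaining $\beta_\mathsf{g}$, $\mu$, $\gamma$, $\omega$ and $\gamma\omega$ redexes do not create a substitution and are handled directly by the expansion proposition.

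The real work is the case where $e$ is neither a normal form nor a redex. Then $e$ is determined by its outermost constructor, and in each subcase I would type the proper subexpressions by the induction hypothesis (they are SN, with strictly smaller size) and reassemble the derivation with the matching rule of Figure~\ref{fig:inttyp-llG}: $(\to_R)$ for $\lambda x.t$ and $\lambda x.\weak{x}{t}$, a weakening rule for $\weak{x}{e}$, a contraction rule for a non--propagating contraction, and $(Sel)$ for a selection. The delicate subcases are the cut $x(u::k')$ and the contraction--over--a--cut shapes such as $\cont{x}{y}{z}{y(t::k)}$, where I cannot simply glue independent typings of the parts, because the $(Cut)$ and $(\to_L)$ rules force the intersection in the stoup of the context to be \emph{exactly} the intersection of the types derivable for the term occupying the head position. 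The way around this is the same as in $\rcl$: apply the induction hypothesis to the largest proper subexpression (the context $u::k'$, respectively the cut $y(t::k)$), which is already typeable with a coherent stoup, and then use the Generation Lemma (Lemma~\ref{prop:intGL-Gtz}) to peel off the constituent typings before rebuilding $e$ with $(Cut)$, $(\to_L)$ and the contraction rules.

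I expect this stoup--alignment, threaded through the mutual term/context recursion, to be the main obstacle; the two--sortedness also means I must supply a context version of head subject expansion alongside Proposition~\ref{prop:sub-exp-Gtz} and double every base lemma on the term/context pair. Once the non--redex case is organised around the Generation Lemma as above, however, it reduces to exactly the bookkeeping already carried out for the cut normal forms in Proposition~\ref{prop:nf-are-typ-Gtz}, and the argument closes.
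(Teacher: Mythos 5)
Your proposal is correct and follows essentially the same route as the paper, which simply proves this theorem by the argument of Theorem~\ref{thm:SNtypable} (induction on the length of the longest reduction sequence with a subinduction on size, using Proposition~\ref{prop:nf-are-typ-Gtz}, head subject expansion via Proposition~\ref{prop:sub-exp-Gtz} together with Lemmas~\ref{prop:inv-subst-Gtz} and~\ref{prop:inverseapp-Gtz}, and the Generation Lemma for the non-redex, non-normal-form case). Your explicit attention to the mutual term/context induction and the stoup alignment is exactly the bookkeeping the paper leaves implicit in the word ``analogous''.
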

\begin{proof}
Analogous to the proof of Theorem~\ref{thm:SNtypable}.
\end{proof}

Now we give a characterisation of strong normalisation in $\llG$-calculus.

\begin{theorem}
In $\llG$-calculus, the term $t$ is strongly normalising if and
only if it is typeable in $\llG\cap$.
\end{theorem}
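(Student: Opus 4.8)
The plan is to obtain the equivalence as the immediate conjunction of the two implications already established in this section, exactly paralleling the characterisation theorem proved for $\rcl$ at the end of Section~\ref{sec:lambda}. No new machinery is required: both directions are in hand, and the statement is their union.

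First I would invoke the forward direction: if $t$ is typeable in $\llG\cap$, then $t$ is strongly normalising. This is precisely the content of Theorem~\ref{prop:SN-gtz}, whose proof embeds $\llG$-expressions into $\rcl$-terms via the type-preserving mapping $\intt{\;\;}$ (Proposition~\ref{prop:soundness}), simulates every $\llG$-reduction by an $\rcl$-reduction or an equality (Theorem~\ref{prop:simulation}), and closes the argument by placing $\to_{\llG}$ inside the well-founded lexicographic product $\gg = {>_{\rcl}} \times_{lex} {>_c} \times_{lex} {>_w}$ (Proposition~\ref{prop:inclusion}). Here $>_{\rcl}$ inherits well-foundedness from the strong normalisation of $\rcl\cap$ proved in Section~\ref{sec:reducibility}.

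For the converse I would invoke Theorem~\ref{thm:SNtypable-Gtz}: every strongly normalising $\llG$-term is typeable in $\llG\cap$. Its proof proceeds as in Section~\ref{sec:SNtype}, by first typing all $\llG$-normal forms (Proposition~\ref{prop:nf-are-typ-Gtz}) and then propagating typeability backwards along reductions using head subject expansion (Proposition~\ref{prop:sub-exp-Gtz}), with an induction on the length of the longest reduction path and a subinduction on size.

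Combining the two implications yields the biconditional, so the theorem follows with no further work. Since both directions are already proved, there is no genuine obstacle at this final stage; the real difficulty was concentrated in the forward direction, specifically in ruling out infinite sequences consisting only of the reductions $\gamma_6$ and $\omega_6$, which the embedding $\intt{\;\;}$ collapses to identities on $\rcl$-terms. That case is exactly what the auxiliary norms $\cnorm{\;}$ and $\wnorm{\;}$ and the orders $>_c,>_w$ are designed to preclude, so the present statement is a clean corollary of the preceding development.
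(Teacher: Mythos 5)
Your proof is correct and matches the paper's: the theorem is stated there as an immediate consequence of Theorem~\ref{prop:SN-gtz} (typeability implies strong normalisation) and Theorem~\ref{thm:SNtypable-Gtz} (strong normalisation implies typeability), exactly the two implications you combine. The additional recap of how each direction is established is accurate but not needed for this final step.
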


\begin{proof}
Immediate consequence of Theorems~\ref{prop:SN-gtz} and~\ref{thm:SNtypable-Gtz}.
\end{proof}

\section{Intersection types for the resource control lambda calculus with explicit substitution $\rclx$}
\label{sec:lambda_xsub}

\subsection{Resource control lambda calculus with explicit substitution $\rclx$}
\label{subsec:rclx}

The \emph{resource control} lambda calculus with explicit
substitution $\rclx$, is an extension of the
$\lambda_{\mathsf{x}}$-calculus with explicit operators for
weakening and contraction. It corresponds to the $\llxr$-calculus
of Kesner and Lengrand, proposed in \cite{kesnleng07}, and also
represents a vertex of ``the prismoid of resources''.

The {\em pre-terms} of $\rclx$-calculus are given by the following
abstract syntax:
$$
\begin{array}{lcrcl}
\textrm{Pre-terms}    &  & \f & ::= & x \,|\,\lambda x.\f \,|\, \f
\f\,|\,\f\xsub{\f}{x}\,|\,\weak{x}{\f} \,|\,
\cont{x}{x_1}{x_2}{\f}
\end{array}
$$
The only point of difference with respect to $\rcl$-calculus is
the operator of explicit substitution $\xsub{\;}{\;}$.

The set of free variables of a pre-term $\f$, denoted by $Fv(\f)$,
is defined as follows:

\hspace*{10mm}$\begin{array}{c} Fv(x) = x; \quad Fv(\lambda x.\f)
= Fv(\f)\setminus\{x\};\\
Fv(\f \g) = Fv(\f) \cup Fv(\g);\quad Fv(\f \xsub{\g}{x}) = (Fv(\f)\setminus \{x\}) \cup Fv(\g)\\
Fv(\weak{x}{\f}) = \{x\} \cup Fv(\f); \quad
Fv(\cont{x}{x_1}{x_2}{\f}) = \{x\} \cup Fv(\f)\setminus
\{x_1,x_2\}.
\end{array}$

In $\f \xsub{\g}{x}$, the substitution binds the variable $x$ in
$\f$.

The set of $\rclx$-{\em terms}, denoted by $\Rclx$ and ranged over
by $M,N,P,M_1,...$. is a subset of the set of pre-terms, defined by the rules
in Figure~\ref{fig:xsubterms}.
\begin{figure}[htpb]
\centerline{ \framebox{ $
    \begin{array}{c}
      \begin{array}{c@{\qquad\qquad}c}
        \infer{x \in \Rclx}{} &
        \infer{\lambda x.\f \in \Rclx}
        {\f \in \Rclx \;\; x \in Fv(\f)}
      \end{array}
      \\\\
    \infer{\f \g \in \Rclx}
    {\f \in \Rclx\;\; \g \in \Rclx \;\; Fv(\f) \cap Fv(\g) = \emptyset}
    \\\\
    \infer{\f \xsub{\g}{x} \in \Rclx}
    {\f \in \Rclx\;\; \g \in \Rclx \;\; x \in Fv(\f) \;\; (Fv(\f)\setminus\{x\}) \cap Fv(\g) = \emptyset}
    \\\\
    \begin{array}{c@{\qquad}c}
      \infer{\weak{x}{\f} \in \Rclx}
      {\f \in \Rclx \;\; x \notin Fv(\f)}&
      \infer{\cont{x}{x_1}{x_2}{\f} \in \Rclx}
      {\f \in \Rclx \;\; x_{1} \not = x_{2},\;\; x_1,x_2 \in Fv(\f) \;\; x \notin Fv(\f) \setminus \{x_{1},x_{2}\}}
    \end{array}
  \end{array}
$ }} \caption{$\Rclx$:  $\rclx$-terms} \label{fig:xsubterms}
\end{figure}

The notion of terms corresponds to the notion of linear terms in
\cite{kesnleng07}.

The reduction rules of $\rclx$-calculus are presented in
Figure~\ref{fig:red-rclx}.
\begin{figure}[htbp]
\centerline{ \framebox{ $
\begin{array}{lrcl}
(\beta_\mathsf{x})             & (\lambda x.M)N & \rightarrow & M\xsub{N}{x} \\[2mm]
(\sigma_1)          & x \xsub{N}{x} & \rightarrow & N\\
(\sigma_2)          &  (\lambda y.M)\xsub{N}{x} & \rightarrow & \lambda y.M \xsub{N}{x}\\
(\sigma_3)          & (MP) \xsub{N}{x} & \rightarrow & M \xsub{N}{x} P,\;\mbox{if} \; x \notin Fv(P)\\
(\sigma_4)          & (MP) \xsub{N}{x} & \rightarrow & M P \xsub{N}{x},\;\mbox{if} \; x \notin Fv(M)\\
(\sigma_5)          & (\weak{x}{M}) \xsub{N}{x} & \rightarrow & \weak{Fv(N)}{M}\\
(\sigma_6)          & (\weak{y}{M}) \xsub{N}{x} & \rightarrow & \weak{y}{M\xsub{N}{x}},\;\mbox{if} \; x \neq y\\
(\sigma_7)          & (\cont{x}{x_1}{x_2}{M})\xsub{N}{x} & \rightarrow &
                    \cont{Fv(N)}{Fv(N_1)}{Fv(N_2)}{M\xsub{N_1}{x_1}\xsub{N_2}{x_2}}\\
(\sigma_8)          & (M \xsub{N}{x})\xsub{P}{y} & \rightarrow & M \xsub{N\xsub{P}{y}}{x},\;\mbox{if} \; y \notin Fv(M) \setminus \{x\} \\[2mm]
(\gamma_1)          & \cont{x}{x_1}{x_2}{(\lambda y.M)} &
\rightarrow & \lambda y.\cont{x}{x_1}{x_2}{M}\\
(\gamma_2)          & \cont{x}{x_1}{x_2}{(MN)} & \rightarrow &
(\cont{x}{x_1}{x_2}{M})N, \;\mbox{if} \; x_1,x_2 \not\in Fv(N)\\
(\gamma_3)          & \cont{x}{x_1}{x_2}{(MN)} & \rightarrow &
M(\cont{x}{x_1}{x_2}{N}), \;\mbox{if} \; x_1,x_2 \not\in Fv(M)\\
(\gamma_4)          & \cont{x}{x_1}{x_2}{(M\xsub{N}{y}) &
\rightarrow &
M\xsub{\cont{x}{x_1}{x_2}{N}}{y}, \;\mbox{if} \; x_1,x_2 \notin Fv(M)\setminus \{y\}\\[2mm]
(\omega_1)          & \lambda x.(\weak{y}{M}) & \rightarrow & \weak{y}{(\lambda x.M)},\;x \neq y\\
(\omega_2)          & (\weak{x}{M})N & \rightarrow & \weak{x}{(MN)}\\
(\omega_3)          & M(\weak{x}{N}) & \rightarrow & \weak{x}{(MN)}\\
(\omega_4)          & M\xsub{\weak{x}{N}}{y} & \rightarrow & \weak{x}{(M\xsub{N}{y})}\\[2mm]
(\gamma \omega_1)   & \cont{x}{x_1}{x_2}{(\weak{y}{M})} &
\rightarrow & \weak{y}{(\cont{x}{x_1}{x_2}{M})},\;y \neq x_1,x_2\\
(\gamma \omega_2)   & \cont{x}{x_1}{x_2}{(\weak{x_1}{M})} &
\rightarrow & M \xsub{x}{x_2}
\end{array}
$ }}}\caption{Reduction rules of $\rclx$-calculus}
\label{fig:red-rclx}
\end{figure}

In the $\rclx$, one works modulo equivalencies given in
Figure~\ref{fig:equiv-rclx}.

\begin{figure}
\centerline{ \framebox{ $
\begin{array}{lrcl}
(`e_1) & \weak{x}{(\weak{y}{M})} & \equiv_{\rclx} &
\weak{y}{(\weak{x}{M})}\\
(`e_2) & \cont{x}{x_1}{x_2}{M} & \equiv_{\rclx} & \cont{x}{x_2}{x_1}{M}\\
(`e_3) & \cont{x}{y}{z}{(\cont{y}{u}{v}{M})} & \equiv_{\rclx}  &
\cont{x}{y}{u}{(\cont{y}{z}{v}{M})}\\
(`e_4) & \cont{x}{x_1}{x_2}{(\cont{y}{y_1}{y_2}{M})} &
\equiv_{\rclx}  &
\cont{y}{y_1}{y_2}{(\cont{x}{x_1}{x_2}{M})},\;\; x \neq y_1,y_2, \; y \neq x_1,x_2 \\
(`e_5) & M\xsub{N}{x}\xsub{P}{y} & \equiv_{\rclx}  &
M\xsub{P}{y}\xsub{N}{x},\;\; x \notin Fv(P),\; y \notin Fv(M) \\
(`e_6) & (\cont{y}{y_1}{y_2}{M}) \xsub{N}{x} & \equiv_{\rclx} &
\cont{y}{y_1}{y_2}{M\xsub{N}{x}},\;\;x \neq y,\; y_1,y_2 \notin
Fv(N)
\end{array}
$ }} \caption{Equivalences in $\rclx$-calculus}
\label{fig:equiv-rclx}
\end{figure}
%\bigotimes

%\input{tcs_types_xsub.tex}
\subsection{Intersection types for $\rclx$}
\label{subsec:rclx-inttypes}

In this subsection we introduce intersection type assignment
system which assigns \emph{strict types} to $\rclx$-terms.
The system is syntax-directed, hence significantly different
from the one proposed in \cite{lenglescdougdezabake04}.

The syntax of types and the definitions of type assignment, basis,
etc.\ are the same as in the case of the system $\rcl \cap$. The
type assignment system $\rclx \cap$ is given in
Figure~\ref{fig:typ-rclx-int}. The only difference with respect to
the $\rcl \cap$ is the presence of one new type assignment rule,
namely $(Subst)$ for typing the explicit substitution. The rules
$(\to_E)$ and $(Subst)$ are constructed in the same manner, as
explained in subsection~\ref{subsec:rcl-inttypes}.

\begin{figure}[ht]
\centerline{ \framebox{ $
\begin{array}{c}
\\
\infer[(Ax)]{x:\tS \vdash x:\tS}{}\\ \\
\infer[(\to_I)]{\Gamma \vdash \lambda x.M:\tA \to \tS}
                        {\Gamma,x:\tA \vdash M:\tS} \quad\quad
\infer[(\to_E)]{\Gamma, \dztop \sqcap \Delta_1 \sqcap ... \sqcap
\Delta_n \vdash MN:\tS}
                    {\Gamma \vdash M:\cap^n_i \tT_i \to \tS & \Delta_0 \vdash N:\tT_0 & ... & \Delta_n \vdash N:\tT_n}\\ \\
\infer[(Subst)]{\Gamma, \dztop \sqcap \Delta_1 \sqcap ... \sqcap
\Delta_n \vdash M\xsub{N}{x}:\tS}
                    {\Gamma, x:\cap^n_i \tT_i \vdash M:\tS & \Delta_0 \vdash N:\tT_0 & ... & \Delta_n \vdash N:\tT_n}\\ \\
\infer[(Cont)]{\Gamma, z:\tA \cap \tB \vdash
\cont{z}{x}{y}{M}:\tS}
                    {\Gamma, x:\tA, y:\tB \vdash M:\tS} \quad\quad
\infer[(Weak)]{\Gamma, x:\top \vdash \weak{x}{M}:\tS}
                    {\Gamma \vdash M:\tS}\\
\end{array}
$ }} \caption{$\rclx \cap$: $\rclx$-calculus with intersection
types} \label{fig:typ-rclx-int}
\end{figure}

\begin{proposition}[Generation lemma for $\rclx \cap$]
\label{prop:intGL_xsub} \rule{0in}{0in}
\begin{enumerate}
\item[(i)] $\Gamma \vdash \lambda x.M:\tT\;\;$ iff there exist
$`a$ and $`s$ such that $\;\tT\equiv \tA\rightarrow \tS\;\;$ and
$\;\Gamma,x:\tA \vdash M:\tS.$
\item[(ii)] $\Gamma \vdash MN:\tS\;\;$ iff there exist $\Delta_j$
and $\tT_j,\;j  = 0, \ldots, n$ such that $\;\Delta_{j} \vdash
N:\tT_j$ and $\Gamma' \vdash M:\cap_{i}^{n}\tT_i\to \tS$, moreover
$\;\Gamma=\Gamma',\dztop \sqcap \Delta_1 \sqcap \ldots \sqcap
\Delta_{n}$.
\item[(iii)] $\Gamma \vdash M\xsub{N}{x}:\tS\;\;$ iff there exist
a type  $\tA =\cap^{n}_{j=0} \tT_j,\;$ such that for all $j \in
\{0, \ldots, n\}$, $\;\Delta_{j} \vdash N:\tT_j$ and
$\Gamma',x:\cap_{i}^{n}\tT_i \vdash M:\tS$, moreover
$\Gamma=\Gamma',x:\tA,\dztop \sqcap \Delta_1 \sqcap \ldots \sqcap
\Delta_{n}$.
\item[(iv)] $\Gamma \vdash \cont{z}{x}{y}{M}:\tS\;\;$ iff there
exist $`G', `a, `b$ such that
$\;\Gamma=\Gamma',z:`a\cap`b$ \\
and $\;\Gamma', x:\tA, y:\tB \vdash M:\tS.$
\item[(v)] $\Gamma \vdash \weak{x}{M}:\tS\;\;$ iff
 $\;\Gamma=\Gamma', x:\top$ and $\;\Gamma' \vdash M:\tS.$
\end{enumerate}
\end{proposition}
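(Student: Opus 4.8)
The plan is to prove all five equivalences in one stroke by exploiting the fact that the type assignment system $\rclx\cap$ of Figure~\ref{fig:typ-rclx-int} is \emph{syntax-directed}: every term constructor is the subject of exactly one inference rule, so the outermost shape of the subject term uniquely determines which rule was applied last in any derivation. Concretely, a judgment $\Gamma \vdash \lambda x.M:\tT$ can only be concluded by $(\to_I)$, a judgment $\Gamma \vdash MN:\tS$ only by $(\to_E)$, a judgment $\Gamma \vdash M\xsub{N}{x}:\tS$ only by $(Subst)$, a judgment $\Gamma \vdash \cont{z}{x}{y}{M}:\tS$ only by $(Cont)$, and a judgment $\Gamma \vdash \weak{x}{M}:\tS$ only by $(Weak)$. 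This pairing of term forms with rules yields clauses (i)--(v) respectively.

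For each clause the \emph{if} direction is immediate: assuming the stated premises, one applies the corresponding rule to obtain the judgment on the left. The \emph{only-if} direction is obtained by inverting that same rule; since no other rule could have produced the conclusion, the premises recorded in the rule are exactly the judgments whose existence the clause asserts, and the claimed decomposition of the environment is read off directly from the conclusion of the rule. Clauses (i), (iv) and (v) are literal rule inversions and need no further comment, as the rules $(\to_I)$, $(Cont)$ and $(Weak)$ are identical to their counterparts in $\rcl\cap$, so these clauses coincide with the corresponding clauses of the Generation Lemma for $\rcl\cap$ (Lemma~\ref{prop:intGL}).

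The only clauses requiring a little care are (ii) and (iii), because the rules $(\to_E)$ and $(Subst)$ carry the $\top$-bookkeeping that encodes the $n=0$ and $n>0$ cases uniformly. Here I would unfold the conclusion environment $\Gamma, \dztop \sqcap \Delta_1 \sqcap \ldots \sqcap \Delta_n$, recalling that $\dztop$ is the all-$\top$ version of the environment $\Delta_0$ typing $N$ and that $\dztop$ is the neutral element for $\sqcap$, as noted after Definition~\ref{def:typass-basis}. Reading the rule top-down then recovers precisely the family $\Delta_0 \vdash N:\tT_0, \ldots, \Delta_n \vdash N:\tT_n$ together with $\Gamma' \vdash M:\cap^n_i \tT_i \to \tS$ for (ii), and together with $\Gamma', x:\cap^n_i \tT_i \vdash M:\tS$ for (iii); conversely, given such a family, the rule reassembles the stated conclusion. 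The bookkeeping of which environment supplies a type for $N$ when $n=0$ (so that the surviving free variables of $N$ are forced to carry $\top$) is the single point where one must be explicit, and it is handled exactly as in the discussion of the $(\to_E)$ rule in Subsection~\ref{subsec:rcl-inttypes}.

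I expect no genuine obstacle: the whole argument is a rule-by-rule inversion, and the only minor subtlety is keeping the $\Delta_0$ / $\dztop$ bookkeeping of the $n=0$ case straight in clauses (ii) and (iii). Consequently the proof can be stated briefly as ``straightforward, since all the rules are syntax-directed,'' in the same spirit as Lemma~\ref{prop:intGL}.
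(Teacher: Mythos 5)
Your proposal is correct and is exactly the argument the paper intends: the paper gives no explicit proof for this proposition, but the analogous Generation Lemma for $\rcl\cap$ (Lemma~\ref{prop:intGL}) is proved by precisely the observation you make, namely that every rule is syntax-directed and can be inverted, with the $\dztop$ bookkeeping in the $(\to_E)$ and $(Subst)$ rules being the only point needing care. Nothing further is required.
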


The proposed system also satisfies preservation of free variables,
bases intersection and subject reduction and equivalence.

%\input{tcs_typeSN_xsub.tex}  
%%%%%%%%%%%%%%%%%%%%
% LACKING FILE
%%%%%%%%%%%%%%%%%%%%

%\input{tcs_SNtype_xsub.tex}
%%%%%%%%%%%%%%%%%%%%
% LACKING FILE
%%%%%%%%%%%%%%%%%%%%

\section{Conclusions}
\label{sec:conclusion}

In this paper, we have proposed intersection type assignment systems for:
\begin{itemize}
\item
resource control lambda calculus $\rcl$, which corresponds to $\lambda_{{\tt C W}}$ of \cite{kesnrena09};
\item
resource control sequent lambda calculus $\llG$ of~\cite{ghilivetlesczuni11} and
\item
resource control calculus with explicit substitution $\rclx$ of~\cite{kesnleng07}.
\end{itemize}
The three intersection type assignment systems proposed here give
a complete characterization of strongly normalizing terms for
these three calculi. The strong normalisation of typeable resource
control lambda terms is proved directly by an appropriate
modification of the reducibility method, whereas the same property
for resource control sequent lambda terms is proved by
well-founded lexicographic order based on suitable embedding into
the former calculus and the strong normalisation of the calculus
with explicit substitution is given by its interpretation in the
resource control lambda calculus. This paper expands the range of
the intersection type techniques and combines different methods in
the strict types environment.  It should be noticed that the
strict control on the way variables are introduced determines the
way terms are typed in a given environment. Basically, in a given
environment no irrelevant intersection types are introduced. The
flexibility on the choice of a type for a term, as it is used in
rule $(\to_E)$ in Figure~\ref{fig:typ-rcl-int}, comes essentially
from the choice one has in invoking the axiom.  Unlike the
approach of introducing non-idempotent intersection types into the
calculus with some kind of resource management~\cite{pagaronc10},
our intersection is idempotent. As a consequence, our type
assignment system corresponds to full intuitionistic logic, while
non-idempotent intersection type assignment systems correspond to
intuitionistic linear logic.

The three presented calculi $\rcl$, $\llG$ and $\rclx$ are good
candidates to investigate the computational content of
substructural logics~\cite{schrdose93}, both in natural deduction
and sequent calculus.  The motivation for these logics comes from
philosophy (Relevant Logics), linguistics (Lambek Calculus) to
computing (Linear Logic). Since the basic idea of resource control
is to explicitly handle structural rules, the control operators
could be used to handle the absence of (some) structural rules in
substructural logics such as weakening, contraction,
commutativity, associativity. This would be an interesting
direction for further research. Another direction will involve the
investigation of the use of intersection types, being a powerful
means for building models of lambda
calculus~\cite{barecoppdeza83,dezaghillika04}, in constructing
models for sequent lambda calculi. Finally, one may wonder how the
strict control on the duplication and the erasure of variables
influences the type reconstruction of
terms~\cite{DBLP:journals/entcs/BoudolZ05,kfouwell04}.

\textbf{Acknowledgements:} We would like to thank the ICTAC 2011
anonymous referees for their careful reading and many valuable
comments, which helped us improve the final version of the paper.
We would also like to thank Dragi\v sa \v Zuni\' c for
participating in the earlier stages of the work.

\bibliographystyle{plain}
%\bibliography{../referencesLong}

\end{document}
%%% Local Variables:
%%% mode: latex
%%% mode: flyspell
%%% mode: reftex
%%% mode: auto-fill
%%% ispell-dictionary: "en"
%%% fill-column: 85
%%% TeX-master: t
%%% TeX-PDF-mode: t
%%% End: